\newcommand{\beq}{\begin{equation}}
\newcommand{\eeq}{\end{equation}}
\newcommand{\beqs}{\begin{equation*}}
\newcommand{\eeqs}{\end{equation*}}
\newcommand{\ba}{\begin{array}}
\newcommand{\ea}{\end{array}}
\newcommand{\beas}{\begin{eqnarray*}}
\newcommand{\eeas}{\end{eqnarray*}}
\newcommand{\bea}{\begin{eqnarray}}
\newcommand{\eea}{\end{eqnarray}}
\newcommand{\bal}{\begin{align}}
\newcommand{\eal}{\end{align}}
\newcommand{\bals}{\begin{align*}}
\newcommand{\eals}{\end{align*}}
\newcommand{\N}{\ensuremath{\mathbb N}}
\newcommand{\norm}[1]{\| {#1} \|}
\newcommand{\bds}{\begin{displaystyle}}
\newcommand{\eds}{\end{displaystyle}}
\renewcommand{\eqref}[1]{(\ref{#1})}
\def\longequals{\mathbin{=\kern-2pt=}}
\def\eqdef{\stackrel{\rm def}{=}}
\def\varep{\varepsilon}
\def\ddt{\frac{d}{dt}}
\newcommand{\remove}[1]{} 
\renewcommand{\remove}[1]{#1} 
\newtheorem{theorem}{Theorem}[section]
\newtheorem{lemma}[theorem]{Lemma}
\newtheorem{proposition}[theorem]{Proposition}
\newtheorem{remark}[theorem]{\bf{Remark}}
\theoremstyle{remark}
\def\midx{\mu}
\def\myclearpage{}
\definecolor{darkred}{rgb}{.70,.12,.20}
\definecolor{darkgreen}{rgb}{.20,.52,.14}
\numberwithin{equation}{section}
\title{Generalized Forchheimer flows of isentropic gases}
\author{Emine Celik$^a$, Luan Hoang$^a$ and Thinh Kieu$^b$}
\date{\today}
\begin{document}
\maketitle 
\begin{center}
\textit{$^a$Department of Mathematics and Statistics, Texas Tech University, Box 41042, Lubbock, TX 79409--1042, U. S. A.} \\
\textit{$^b$Department of Mathematics, University of North Georgia, Gainesville Campus, 3820 Mundy Mill Rd., Oakwood, GA 30566, U. S. A.}\\
Email addresses: \texttt{emine.celik@ttu.edu, luan.hoang@ttu.edu, thinh.kieu@ung.edu}\\
\end{center}
\begin{abstract}
We consider generalized Forchheimer flows of either isentropic gases or slightly compressible fluids in porous media. 
By using Muskat's and Ward's general form of the Forchheimer equations, we describe the fluid dynamics by  a doubly nonlinear parabolic equation for the appropriately defined pseudo-pressure. The volumetric flux boundary condition is converted  to a time-dependent Robin-type boundary condition for this pseudo-pressure.
We study the corresponding initial boundary value problem, and  estimate the $L^\infty$ and $W^{1,2-a}$ (with $0<a<1$) norms for the solution  on the entire domain  in terms of the initial and boundary data. It is carried out by using a suitable  trace theorem and an appropriate modification of Moser's iteration.
\end{abstract}


\pagestyle{myheadings}\markboth{E. Celik, L. Hoang, and T. Kieu}
{Generalized Forchheimer flows of isentropic gases}

\myclearpage    
\section{Introduction}

The most common equation to describe fluid flows in porous media is the Darcy law  
\beq\label{Darcy}
-\nabla p = \frac {\mu}{k} v,
\eeq
where $p$, $v$, $\mu$, $k$ are, respectively (resp.), the pressure, velocity, absolute viscosity and permeability.

However, this linear equation  is not valid in many situations, particularly, when the Reynolds number increases, see \cite{Muskatbook,BearBook}. 
Even in the early work,  Darcy \cite{Darcybook} already acknowledged the deviations from equation \eqref{Darcy}.
There have been many investigations into what equations for hydrodynamics in porous media to replace Darcy's law \eqref{Darcy}, see \cite{Muskatbook,Ward64,BearBook,NieldBook,StraughanBook} and references therein. Forchheimer \cite{Forchh1901,ForchheimerBook} established the following three nonlinear empirical models:  
two-term Forchheimer equation
\beq\label{2term}
-\nabla p=av+b|v|v,
\eeq
three-term Forchheimer equation
\beq\label{3term}
-\nabla p=av+b|v|v+c |v|^2 v,
\eeq
and Forchheimer's power law
\beq\label{power}
-\nabla p=av+d|v|^{m-1}v,\quad\text{for some real number } m\in(1, 2).
\eeq
Above, the positive constants $a,b,c,d$ are obtained from experiments.

While mathematics of Darcy's flows have been studied intensively for a long time with vast literature, see e.g. \cite{VazquezPorousBook},
there is a much smaller number of mathematical papers on Forchheimer flows and they appeared much later. Among those, there are even fewer papers dedicated to compressible fluids.
(See \cite{StraughanBook} and references there in.)

In order to cover general nonlinear flows in porous media formulated from experiments,
generalized Forchheimer equations were proposed. They extend the models \eqref{2term}--\eqref{power} and are of the form 
\beq\label{gF}
-\nabla p =\sum_{i=0}^N a_i |v|^{\alpha_i}v. 
\eeq  
These equations are analyzed numerically in \cite{Doug1993,Park2005, Kieu1},
theoretically in \cite{ABHI1,HI2,HIKS1,HKP1,HK1,HK2} for single-phase flows, and also in \cite{HIK1,HIK2} for two-phase flows.

Our previous analysis  \cite{ABHI1,HI2,HIKS1,HKP1,HK1,HK2}  was focused on a simplified model for slightly compressible fluids.
Though such a minor simplification is commonly used in reservoir engineering, the mathematical rigor is compromised.
Furthermore, since the model does not specify the dependence on the density, its applications to gaseous flows 
would be inaccurate and might present artificial   technical difficulties.
The goals of this paper are: 
(a) Developing a more accurate model for generalized Forccheimer equations for gases, and
(b) Analyzing it without making any simplifications.

For goal (a), we first have to modify \eqref{gF} to reflect the dependence on the density.
We return to an idea by Muskat and  Ward.
By using dimension analysis, Muskat \cite{Muskatbook} and then Ward \cite{Ward64} proposed the following equation for both laminar and turbulent flows in porous media:
\beq\label{W}
-\nabla p =f(v^\alpha k^{\frac {\alpha-3} 2} \rho^{\alpha-1} \mu^{2-\alpha}),\text{ where  $f$ is a function of one variable.}
\eeq 

In particular, when $\alpha=1,2$, Ward \cite{Ward64} established from experimental data that
\beq\label{FW} 
-\nabla p=\frac{\mu}{k} v+c_F\frac{\rho}{\sqrt k}|v|v,\quad \text{where }c_F>0.
\eeq

Combining  \eqref{gF} with the suggestive form \eqref{W} for the dependence on $\rho$ and $v$, we propose the following equation 
 \beq\label{FM}
-\nabla p= \sum_{i=0}^N a_i \rho^{\alpha_i} |v|^{\alpha_i} v,
 \eeq
where $N\ge 1$, $\alpha_0=0<\alpha_1<\ldots<\alpha_N$ are real numbers, the coefficients $a_0, \ldots, a_N$ are positive.  

Here, the viscosity and permeability are considered constant and we do not specify the dependence of $a_i$'s on them.
Our mathematical exposition below will allow all $\alpha_i\ge 1$  in \eqref{FM}. 
In practice, we can simply take $\alpha_N\le 2$ in \eqref{FM} or use the popular model \eqref{FW}.
Even in these cases,  the results obtained in this paper are still new.


Multiplying both sides of  \eqref{FM}  by $\rho$ gives
 \beq\label{eq1}
 g( |\rho v|) \rho v   =-\rho\nabla p,
 \eeq
where $g:\mathbb{R}^+\rightarrow\mathbb{R}^+$ is a generalized polynomial with positive coefficients defined by
\beq\label{eq2}
g(s)=a_0s^{\alpha_0} + a_1s^{\alpha_1}+\cdots +a_Ns^{\alpha_N}\quad\text{for } s\ge 0.
\eeq 
 
We will study the following two types of compressible fluids.

\emph{1. Isentropic gases.} For isentropic gases, the constitutive law is
\beq\label{gas}
p=c\rho^\gamma\quad\text{for some } c,\gamma>0.
\eeq
Then from \eqref{eq1} and \eqref{gas} follows
\beq\label{ru1}
 g( |\rho v|) \rho v   =-\rho\nabla p=-\nabla u\quad \text{with } u=\frac{c\gamma\rho^{\gamma+1}}{\gamma+1}.
\eeq
Solving for $\rho v$ from this equation yields
 \beq\label{ru} 
\rho v=-K(|\nabla u|)\nabla u,
\eeq
where the function $K: \mathbb{R}^+\rightarrow\mathbb{R}^+$ is defined for $\xi\ge 0$ by
\beq \label{Kxidef}
K(\xi)=\frac{1}{g(s(\xi))}  \text{ with  }s=s(\xi) \text{ being the unique non-negative solution of }
sg(s)=\xi.
\eeq

Recall the continuity equation
\beq\label{con-law}
\phi\rho_t+{\rm div }(\rho v)=0,\eeq
where constant $\phi\in(0,1)$ is the porosity.
Rewrite 
\beq\label{maineq1}
\rho=(\frac{\gamma+1}{c\gamma})^\frac1{\gamma+1} u^\lambda \quad\text{with } \lambda=\frac1{\gamma+1}\in (0,1).
\eeq
Combining \eqref{con-law} and \eqref{ru} with relation \eqref{maineq1}, we have
\beq\label{maineq2}
(u^\lambda)_t= \frac{1}{\phi} \Big(\frac{c\gamma}{\gamma+1}\Big)^\lambda\, \nabla\cdot(K(|\nabla u|)\nabla u)).
\eeq

\emph{2. Slightly compressible fluids.} The equation of state is
\beq\label{slight}
\frac 1\rho \frac{d\rho}{dp}=\frac 1\kappa=const.>0.
\eeq
Note from \eqref{slight} that $\rho \nabla p=\kappa\nabla \rho$.
Then combining this with \eqref{eq1} gives
\beq\label{ru0}
 g( |\rho v|) \rho v   =-\nabla u\quad \text{with } u=\kappa \rho.
\eeq
which is the same as \eqref{ru1}. Thus, we obtain formula \eqref{ru} for $\rho v$. 
By combining \eqref{ru} and \eqref{con-law} we have
\beq\label{utporo}
u_t=\frac\kappa\phi\nabla \cdot(K(|\nabla u|)\nabla u).
\eeq

Observe that we can write $u_t=(u^\lambda)_t$ with $\lambda=1$ in \eqref{utporo}. Therefore, the two equations \eqref{maineq2} and \eqref{utporo}  are the same except for the factors on the right-hand sides. Then by scaling the time variable in both \eqref{maineq2} and \eqref{utporo}, we obtain, for both isentropic gases and slightly compressible fluids, the unified equation
\beq\label{maineq}
(u^\lambda)_t= \nabla\cdot(K(|\nabla u|)\nabla u))\quad\text{with } \lambda\in(0,1].
\eeq
This will be the partial differential equation (PDE) of our interest. 
It is derived and will be analyzed without any simplifications (goal (b) above).
Although the case of isentropic gases, i.e. $\lambda<1$, is the main focus, the analysis will also cover the case of slightly compressible fluids, i.e. $\lambda=1$, at no extra cost.

In case of ideal gases, i.e., $\gamma =1$, we can derive from \eqref{maineq} a PDE for pseudo-pressure $p^2$. In general, we rewrite $u$ in \eqref{ru1} as $u=c' p^\frac{\gamma+1}{\gamma}$ for some $c'>0$, 
hence it is approximately the pseudo-pressure for isentropic gases \eqref{gas}.
For simplicity, we refer to $u$ as the pseudo-pressure. Therefore,  equation \eqref{maineq} is a PDE describing the dynamics of the pseudo-pressure $u$. 

Regarding the boundary conditions, we focus on the volumetric flux condition $v\cdot \vec\nu=\psi$, which results  in $\rho v\cdot \vec\nu=\psi \rho$, or,
\beq\label{BC}
-K(|\nabla u|)\nabla u\cdot \vec\nu=\varphi u^\lambda,
\eeq
where $\lambda\in(0,1)$, $\varphi=(\frac{\gamma+1}{c\gamma})^\frac1{\gamma+1}\psi$ in case isentropic gases, and $\lambda =1$, $\varphi=\psi/\kappa$ in case of slightly compressible fluids.  Here, $\vec \nu$ is the outward normal vector on the boundary.

From mathematical point of view, equation \eqref{maineq} for $\lambda<1$ is a doubly nonlinear parabolic equation, which is an interesting topic of its own.
Research on doubly nonlinear parabolic equations follows the development of 
general parabolic equations \cite{LadyParaBook68,LiebermanPara96} and degenerate/singular parabolic equations \cite{DiDegenerateBook,DiHarnackBook}.
However, it requires much more complicated techniques.
See monograph \cite{IvanovBook82}, review paper \cite{IvanovReg1997} and references therein.
For other developments, see e.g.  \cite{kinnunen2007local,Tsutsumi1988,ManfrediVespri,Vespri1992,Alt1983}.

There are two issues that did not attract attention in most existing papers:
(1) Robin boundary condition, and
(2) Estimates for super-critical case. 
For instance, \cite{Tsutsumi1988,ManfrediVespri} give global $L^\infty$-estimates but for homogeneous Dirichlet boundary condition in the sub-critical case, see discussion in Remark \ref{asmall} below.
Regarding interior estimates, the common result is
\beq\label{comineq}
\|u\|_{L^\infty(Q(R/2,T/2))}\le C_{R,T} (\|u\|_{L^q(Q(R,T))}^{\alpha(q)}+1),
\eeq
where $q\in(1,\infty)$, $C_{R,T}>0$ depends explicitly on $R$ and $T$,  while $Q(R,T)$ denotes the cylinder $B_R\times(-T,0)$.
Surnachev \cite{Surnachev2012} improves it to
\beq\label{Mishaineq}
\|u\|_{L^\infty(Q(R/2,T/2))}\le C_{R,T} (\|u\|_{L^q(Q(R,T))}^{\alpha(q)}+\|u\|_{L^q(Q(R,T))}^{\beta(q)})\quad\text{with }\alpha(q),\beta(q)>0.
\eeq
We call \eqref{Mishaineq} a \emph{quasi-homogeneous estimate} (with respect to $\|u\|_{L^q(Q(R,T))}$).
The global version of \eqref{Mishaineq} is not known for doubly nonlinear equations, though it was established for degenerate equations, see e.g. \cite{HK2}.

In this paper, we focus on both topics (1) and (2) listed above.
In this case, the boundary condition \eqref{BC} gives rise to high power boundary integral which cannot be treated by the standard trace theorem.
Therefore, we derive and utilize a new, suitable trace inequality to obtain bounds for the solutions of \eqref{maineq} in terms of initial and boundary data.  
For $L^\infty$-estimates, we make some technical improvements in order to overcome the non-homogeneity of function $K(\cdot)$ and non-zero boundary data. We carefully modify Moser's iteration \cite{moser1971pointwise} and obtain quasi-homogeneous estimates. Our results are for both (spatially) interior and global estimates, hence, extend the previous interior improvement \eqref{Mishaineq}.


Throughout this paper,  $U$ is an open, bounded subset of $\mathbb{R}^n$, with $n=2,3,\ldots$, and has $C^1$-boundary $\Gamma=\partial U$.
For physics problems $n=2,3$, but we consider here any natural number $n\ge 2$. 
Hereafter, we fix the functions $g(s)$ in \eqref{eq1} and \eqref{eq2}. 
Therefore, the exponents $\alpha_i$ and coefficients $a_i$ are all fixed, and so is the function $K(\xi)$  in \eqref{Kxidef}.  
Also, our calculations frequently use the following exponent
\beq\label{eq9}
a=\frac{\alpha_N}{\alpha_N+1}\in (0,1).
\eeq

We consider the initial boundary value problem associated with \eqref{maineq} and \eqref{BC}, specifically,
\beq\label{rho:eq}
\begin{cases}
\frac{\partial (u^{\lambda})}{\partial t} = \nabla \cdot (K (|\nabla u|)\nabla u  ) &\text {in }  U\times (0,\infty),\\
u(x,0)=u_0(x) &\text {in } U,\\
K(|\nabla u|)\frac{\partial u}{\partial \vec\nu}+\varphi u^\lambda=0  &\text{on } \Gamma \times(0,\infty),
\end{cases}
\eeq 
where $u_0(x)$ and $\varphi(x,t)$ are given initial and boundary data, respectively. Again, $\vec\nu$ denotes the outward normal vector on $\Gamma$.
Here, $\lambda$ is a fixed number in $(0,1]$ for the remaining of the paper. 

The current article is focused on studying non-negative solutions of problem \eqref{rho:eq}.
Section \ref{Prelim} contains new trace theorems and multiplicative Sobolev's inequalities, which are suitable to the Robin-type boundary condition \eqref{BC}, as well as the nature of our equation's double nonlinearity.
In section \ref{Lalpha}, we estimate $L^\alpha$-norms of the solutions for all $\alpha>0$, in terms of initial and boundary data. 
These will also be used for later gradient and $L^\infty$ estimates.     
In section \ref{GradSec}, we present estimates for the gradient's $L^{2-a}$-norm for time $t>0$. 
In section \ref{Linterior}, we estimate the $L^\infty$-norm of the solution in any compact subsets of the domain.
Due to the basic Lebesgue norm relation  in Proposition \ref{preMoser}, the Moser's iteration is of a non-homogeneous form \eqref{Moserform1}.
We deal with this by using  Lemma \ref{Genn} and obtain in Theorem \ref{Linf1} the quasi-homogeneous estimate.
Section \ref{Lglobal} is focused on estimating the solution's $L^\infty$-norm on the entire domain.
In Proposition \ref{GLk}, the $L^{\kappa\alpha}$-norm is bounded by the $L^{\alpha+\mu_{1}}$-norm, where, unlike the interior case, both $\kappa>1$ and $\mu_1>0$ depend on $\alpha$,
and the constant  depends on the boundary data.
To manage the powers during iterations, we construct in Lemma \ref{al-be-sq} two controlling sequences $(\alpha_j)_{j=0}^\infty$ and $(\beta_j)_{j=0}^\infty$.
Using these sequences for iterations, we obtain in Theorem \ref{LinfU} the quasi-homogeneous estimates for the $L^\infty$-norm, and in
Theorem \ref{LinfData} the ultimate estimates in terms of initial and boundary data.
The Appendix contains key Lemma \ref{Genn} in implementing Moser's iteration with non-homogeneous inequalities.%
\myclearpage
\section{Auxiliaries}\label{Prelim}

First, we recall elementary inequalities that will be used frequently. 
Let $x,y\ge 0$. Then
\beq\label{ee1}
(x+y)^p\le 2^p(x^p+y^p)\quad  \text{for all }  p>0,
\eeq
\beq\label{ee2}
(x+y)^p\le x^p+y^p\quad  \text{for all } 0<p\le 1,
\eeq
\beq\label{ee3}
(x+y)^p\le 2^{p-1}(x^p+y^p)\quad  \text{for all }  p\ge 1,
\eeq
\beq\label{ee4}
x^\beta \le x^\alpha+x^\gamma\quad \text{for all } 0\le \alpha\le \beta\le\gamma,
\eeq
particularly,
\beq\label{ee5}
x^\beta \le 1+x^\gamma \quad \text{for all } 0\le \beta\le\gamma.
\eeq

Second, we establish particular Poinc\'are-Sobolev inequality and trace theorem for studying our doubly nonlinear equation with Robin-type boundary condition. 

For any $1\le p<n$, we denote by $p^*$ its Sobolev conjugate exponent, that is,
$
p^*=\frac{np}{n-p}.
$

\begin{lemma}\label{gentrace}
In the following statements, $u(x)$ is a function  defined on $U$.
\begin{enumerate}
\item If $\alpha \ge s\ge 0$, $\alpha\ge 1$, and $p>1$, then for any $|u|^\alpha\in W^{1,1}(U)$ and $\varepsilon>0$ one has
\beq\label{trace10}
\int_\Gamma |u|^\alpha d\sigma 
\le \varepsilon \int_U |u|^{\alpha-s}|\nabla u|^p dx + c_1 \int_U |u|^\alpha dx  + (c_2 \alpha)^\frac p{p-1} \varepsilon^{-\frac 1{p-1}} \int_U |u|^{\alpha+\frac{s-p}{p-1}} dx,
\eeq
where   $c_1,c_2>0$ are constants depending on $U$, but not on $u(x),\alpha,s,p$.

\item If  $n>p>1$, $r>0$, $\alpha\ge s\ge 0$,  $\alpha\ge \frac{p-s}{p-1}$, and $\alpha>\frac{n(r+s-p)}{p}$, then for any $\varepsilon>0$ one has
\begin{multline}\label{S10}
\int_U |u|^{\alpha+r}dx
\le \varepsilon \int_U |u|^{\alpha-s}|\nabla u|^p dx+ \varepsilon^{-\frac \theta{1-\theta}} 2^\frac{\theta(\alpha-s+p)}{1-\theta}(c_3 m)^{\frac{\theta p}{1-\theta}} \|u\|_{L^\alpha}^{\alpha+\midx_1}\\  
 +2^{\theta (\alpha-s+p)}c_4^{\theta p}|U|^\frac{\theta(\alpha(p-1)+s-p)}{\alpha}\|u\|_{L^\alpha}^{\alpha+r},
\end{multline}
for all $|u|^m\in W^{1,p}(U)$, where  
\beq\label{mdef}
 m=\frac{\alpha-s+p}p,\quad
\theta=\frac{rn}{n(p-s)+\alpha p},\quad
\midx_1=\frac{r+\theta(s-p)}{1-\theta},
\eeq
and constants $c_3,c_4>0$ depend on $U,p$, but not on $u(x),\alpha$, $s$.

\item If $n>p>1$, $\alpha\ge s>p$,  and $\alpha> \frac{n(s-p)}{p-1}$,  
then for any $\varepsilon>0$, one has
\begin{multline}\label{trace30}
\int_\Gamma |u|^\alpha d\sigma 
\le 2\varepsilon \int_U |u|^{\alpha-s}|\nabla u|^p dx + c_1 \|u\|_{L^\alpha}^\alpha\\
+ \varepsilon^{-\frac 1{p-1}}  D_{1,\alpha}\|u\|_{L^\alpha}^{\alpha+r}
\quad  + \varepsilon^{-(\frac 1{p-1}+\frac p{p-1}\frac \theta{1-\theta})}  D_{2,\alpha} \|u\|_{L^\alpha}^{\alpha+\midx_1},
\end{multline}
for all functions $u(x)$ satisfying $|u|^\alpha\in W^{1,1}(U)$ and $|u|^m\in W^{1,p}(U)$,
where 
 $m$ is defined by \eqref{mdef}, 
\beq\label{thetaorig} 
\theta=\frac{1}{(p-1)(\frac{\alpha p}{n(s-p)}-1)},
\eeq
\beq\label{d12}
D_{1,\alpha}= 2^{\theta(\alpha-s+p)}(c_2 \alpha)^\frac p{p-1}  c_4^{\theta p}|U|^\frac{\theta(\alpha(p-1)+s-p)}{\alpha},\quad
D_{2,\alpha}=  2^\frac{\theta(\alpha-s+p)}{1-\theta} (c_2\alpha)^{\frac p{p-1}\frac 1{1-\theta}} (c_3 m)^\frac{\theta p}{1-\theta}.
\eeq
\end{enumerate}
\end{lemma}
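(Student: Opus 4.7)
I would prove the three statements in order, using (i) to produce the boundary estimate, (ii) to supply an interpolation bound for high-power interior terms, and (iii) as the composition of the two. In each part the conceptual input is a single well-known inequality (trace, Gagliardo--Nirenberg, or both), and the work is to track the $\alpha$-dependence explicitly through a Young-type splitting.

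For (i), the starting point is the standard $W^{1,1}$ trace inequality applied to the scalar $|u|^\alpha$:
\[
\int_\Gamma |u|^\alpha\,d\sigma \le c_0\int_U |u|^\alpha\,dx + c_0\int_U \bigl|\nabla|u|^\alpha\bigr|\,dx = c_0\int_U |u|^\alpha\,dx + c_0\alpha\int_U |u|^{\alpha-1}|\nabla u|\,dx,
\]
with $c_0=c_0(U)$. I would then split $\alpha|u|^{\alpha-1}|\nabla u| = \bigl(|u|^{(\alpha-s)/p}|\nabla u|\bigr)\cdot\bigl(\alpha|u|^{\alpha-1-(\alpha-s)/p}\bigr)$ and apply Young's inequality with conjugate exponents $p$ and $p/(p-1)$, tuning the Young parameter proportional to $\varepsilon$. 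A direct calculation shows the residual integrand is $|u|^{\alpha+(s-p)/(p-1)}$ with prefactor $(c_2\alpha)^{p/(p-1)}\varepsilon^{-1/(p-1)}$, which is exactly \eqref{trace10}.

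For (ii), I would set $v=|u|^m$ with $m=(\alpha-s+p)/p$; the hypothesis $\alpha\ge s$ gives $m\ge 1$, so $v\in W^{1,p}(U)$ and $|\nabla v|^p = m^p|u|^{\alpha-s}|\nabla u|^p$. With the exponents $q_0=\alpha/m$ and $q=(\alpha+r)/m$, one has $\|v\|_{L^{q_0}}=\|u\|_{L^\alpha}^m$ and $\int_U|u|^{\alpha+r}\,dx=\|v\|_{L^q}^q$. I would then invoke the bounded-domain Gagliardo--Nirenberg--Sobolev inequality in the additive form
\[
\|v\|_{L^q}\le c_3\|\nabla v\|_{L^p}^{\theta_0}\|v\|_{L^{q_0}}^{1-\theta_0} + c_4|U|^{\beta_0}\|v\|_{L^{q_0}},
\]
where $\theta_0\in(0,1)$ is fixed by the scaling identity $1/q=\theta_0(1/p-1/n)+(1-\theta_0)/q_0$; a short calculation confirms $\theta_0 q/p=\theta$ for the $\theta$ in \eqref{mdef}. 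Raising both sides to the $q$-th power via \eqref{ee3} and then applying Young's inequality with conjugate pair $(1/\theta,\,1/(1-\theta))$ to the mixed product absorbs $\|\nabla v\|_{L^p}^{\theta_0 q}$ into $\varepsilon\int|u|^{\alpha-s}|\nabla u|^p\,dx$; the complementary factor raises $\|v\|_{L^{q_0}}$ to the power $m(1-\theta_0)q/(1-\theta)$, which reduces by the algebraic identity $mpq=\alpha+r$ exactly to $\alpha+\mu_1$. The non-interpolated term $c_4|U|^{\beta_0}\|v\|_{L^{q_0}}$ contributes, after raising to the $q$ power, the last summand $|U|^{\theta(\alpha(p-1)+s-p)/\alpha}\|u\|_{L^\alpha}^{\alpha+r}$ of \eqref{S10}.

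Part (iii) is the composition: in \eqref{trace10} the last integrand has power $\alpha+(s-p)/(p-1)$, so I would apply \eqref{S10} with the specific choice $r=(s-p)/(p-1)>0$. The admissibility hypotheses of (ii) then reduce to $\alpha>n(s-p)/(p-1)$, which is assumed; and substituting $r=(s-p)/(p-1)$ into $\theta=rn/[n(p-s)+\alpha p]$ collapses it algebraically to \eqref{thetaorig}. To make the combined gradient coefficient equal exactly to $\varepsilon$, I would apply \eqref{S10} with the tuned parameter $\varepsilon'=\varepsilon^{p/(p-1)}(c_2\alpha)^{-p/(p-1)}$; adding this $\varepsilon$ to the $\varepsilon$ already present in \eqref{trace10} yields the $2\varepsilon$ prefactor of \eqref{trace30}. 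The residual exponents on $\varepsilon$ and on $(c_2\alpha)$ consolidate by elementary arithmetic into the constants $D_{1,\alpha}$ and $D_{2,\alpha}$ in \eqref{d12}. \textbf{Expected main obstacle.} The conceptual structure is standard; the real difficulty is bookkeeping. The delicate point is tracking the $\alpha$-dependence of every constant --- $\alpha^{p/(p-1)}$, $m^{\theta p/(1-\theta)}$, and the $|U|$-exponent $\theta(\alpha(p-1)+s-p)/\alpha$ --- because these factors are exactly what the Moser iterations in Sections \ref{Linterior} and \ref{Lglobal} will have to control. Verifying $\theta_0,\theta\in(0,1)$ throughout the admissible range of $\alpha$ is a by-product of the hypothesis $\alpha>n(r+s-p)/p$, which makes the denominator $n(p-s)+\alpha p$ positive and forces $\theta<1$.
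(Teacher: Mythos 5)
Your proposal follows the paper's proof essentially step for step: part (i) is the $W^{1,1}$ trace inequality applied to $|u|^\alpha$ followed by a Young split, part (ii) passes to $v=|u|^m$, uses a Sobolev estimate plus interpolation plus Young, and part (iii) composes (i) with (ii) at $r=(s-p)/(p-1)$ using the tuned parameter $\varepsilon'=\varepsilon^{p/(p-1)}(c_2\alpha)^{-p/(p-1)}$ so that the two gradient contributions add up to $2\varepsilon$, exactly as the paper does. The one cosmetic difference is that in part (ii) you quote a bounded-domain Gagliardo--Nirenberg inequality with the $|U|^{\beta_0}$ factor and $\alpha$-independent constants built in, whereas the paper derives that same estimate from the basic Sobolev--Poincar\'e inequality (whose constants depend only on $p,U$) via explicit interpolation and then H\"older from $L^m$ to $L^\alpha$; since the GNS in the form you need is obtained precisely by that chain, there is no substantive divergence.
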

\begin{proof}
We make a couple of comments before starting the proof.
First, the boundary integrals in \eqref{trace10} and \eqref{trace30} are in the sense of traces of $|u|^\alpha$ on $\Gamma$.
Second, observe that the conditions on $u(x)$ do not guarantee that the right-hand sides of  \eqref{trace10}, \eqref{S10} and \eqref{trace30} are finite. In case they are not, these inequalities are understood to be trivially true.
Therefore, the following proof only needs to cover the case when those right-hand sides are finite.

(i) We recall the trace theorem
\beqs
\int_\Gamma |\phi|d\sigma \le c_1\int_U |\phi|dx+c_2\int_U |\nabla \phi|dx,
\eeqs
for all $\phi \in W^{1,1}(U)$, where $c_1$ and $c_2$ are positive constants depending on $U$.
Applying this trace theorem to $\phi=|u|^{\alpha}$, we have 
\beq\label{trualpha}
\begin{aligned}
\int_\Gamma |u|^\alpha d\sigma \le c_1 \int_U |u|^\alpha dx + c_2\alpha \int_U |u|^{\alpha-1}|\nabla u|dx.
\end{aligned}
\eeq
Rewriting $c_2\alpha |u|^{\alpha-1}|\nabla u|$ in the last integral as a product of  $\varepsilon^{1/p}u^{\frac{\alpha-s}p}|\nabla u|$ and $c_2\alpha \varepsilon^{-1/p} u^\frac{(p-1)\alpha+s-p}{p}$, and applying Young's inequality with exponent $p$ and $p/(p-1)$,  we obtain  inequality \eqref{trace10}.

(ii)  Since $\alpha\ge s$, the number $m$ defined by \eqref{mdef} is greater or equal to $1$.
Then applying Sobolev-Poincar\'e  inequality to $|u|^m$ yields
\beq\label{sobwm}
\begin{aligned}
 \|\,|u|^m\,\|_{L^{p^*}} \le c_3 \|\nabla (|u|^m)\|_{L^p} +c_4\int_U |u|^m dx,
\end{aligned}
\eeq
where $c_3$ and $c_4$ are positive constants depending on $U$ and $p$. 
Note that by definition \eqref{mdef} of $m$, we have $(m-1)p=\alpha-s$. Hence \eqref{sobwm} can be written as
\beqs
\Big(\int_U |u|^{p^*m} dx\Big)^{1/p^*}
\le c_3 m\Big(\int_U |u|^{\alpha-s}|\nabla u|^p dx\Big)^{1/p} +c_4 \int_U |u|^m dx.
\eeqs
Raising both sides to the power $1/m\le 1$ and using inequality \eqref{ee2},  we obtain
\beq\label{ti1}
\Big(\int_U u^{p^*m} dx\Big)^\frac{1}{p^*m}
\le (c_3 m)^\frac1m \Big(\int_U u^{\alpha-s}|\nabla u|^p dx\Big)^\frac1{\alpha-s+p} +c_4^\frac1m\Big( \int_U u^m dx\Big)^\frac1m .
\eeq
Note that
\beq\label{qdef}
q=p^*m=\frac{n(\alpha-s+p)}{n-p}.
\eeq
Then \eqref{ti1} yields
\beq\label{Sov10}
\|u\|_{L^q}\le (c_3 m)^\frac 1m \Big(\int_U u^{\alpha-s}|\nabla u|^p dx\Big)^\frac1{\alpha-s+p} +c_4^\frac1m\|u\|_{L^m}.
\eeq
Since $n>p$, $r>0$, and $\alpha>\frac{n(r+s-p)}{p}$, one has
$\alpha< \alpha+r <q$.
Then 
\beqs
\frac1{\alpha+r}
=\frac{\theta_0}{q}+\frac{1-\theta_0}{\alpha},
\eeqs
where $\theta_0\in (0,1)$  is defined by
\beq\label{thezero}
\theta_0= \frac{rq}{(\alpha+r)(q-\alpha)} \quad \text{with }q=p^*m,\eeq 
Then interpolation inequality and \eqref{Sov10} give
\begin{align*}
 \| u\|_{L^{\alpha+r}}
&\le \| u\|_{L^q}^{\theta_0}\|u\|_{L^\alpha}^{1-\theta_0}
\le \Big\{ (c_3 m)^\frac1m \Big(\int_U u^{\alpha-s}|\nabla u|^p dx\Big)^\frac1{\alpha-s+p} +c_4^\frac1m\|u\|_{L^m} \Big\}^{\theta_0}\|u\|_{L^\alpha}^{1-\theta_0}.
\end{align*}
Raising both sides to the power $\alpha+r$  and applying inequality \eqref{ee1} for exponent $\theta_0 (\alpha+r)$ yield
\beq\label{ualpr}
\begin{aligned}
\int_U u^{\alpha+r} dx
&\le 2^{\theta_0 (\alpha+r)}\Big\{ (c_3 m)^\frac{\theta_0(\alpha+r)}{m} \Big(\int_U u^{\alpha-s}|\nabla u|^p dx\Big)^\frac{\theta_0(\alpha+r)}{\alpha-s+p} +c_4^\frac{\theta_0(\alpha+r)}{m}\|u\|_{L^m}^{\theta_0(\alpha+r)} \Big\}\|u\|_{L^\alpha}^{(1-\theta_0)(\alpha+r)}\\
&= 2^{\theta_0 (\alpha+r)}(c_3 m)^\frac{\theta_0 (\alpha+r)}{m} \|u\|_{L^\alpha}^{(1-\theta_0)(\alpha+r)} \Big(\int_U u^{\alpha-s}|\nabla u|^p dx\Big)^\theta\\
&\quad +2^{\theta_0 (\alpha+r)}c_4^\frac{\theta_0 (\alpha+r)}{m}\|u\|_{L^\alpha}^{(1-\theta_0)(\alpha+r)}\|u\|_{L^m}^{\theta_0(\alpha+r)},
\end{aligned}
\eeq
where 
\beq\label{thetatemp}
\theta= \frac{\theta_0(\alpha+r)}{\alpha-s+p}\text{ which is the same as in \eqref{mdef}.}
\eeq
Since $\alpha >\frac{n(r+s-p)}p$, we have  $\theta\in(0,1)$. Then applying Young's inequality to the first term on the right-hand side of \eqref{ualpr} with powers $\frac 1\theta$ and $\frac 1{1-\theta}$, we obtain
\begin{multline}\label{S2}
\int_U u^{\alpha+r}dx
\le \varepsilon \int_U u^{\alpha-s}|\nabla u|^p dx+ \varepsilon^{-\frac \theta{1-\theta}} 2^\frac{\theta_0(\alpha+r)}{1-\theta}(c_3 m)^{\frac{\theta_0 (\alpha+r)}{m}\frac 1{1-\theta}} \|u\|_{L^\alpha}^{(1-\theta_0)(\alpha+r)\frac 1{1-\theta}}\\ 
\quad +2^{\theta_0 (\alpha+r)}c_4^\frac{\theta_0 (\alpha+r)}{m}\|u\|_{L^\alpha}^{(1-\theta_0)(\alpha+r)}\|u\|_{L^{m}}^{\theta_0(\alpha+r)}.
\end{multline}

Since $\alpha\ge \frac{p-s}{p-1}$, then $m\le \alpha$. By applying H\"older's inequality to bound the $L^m$-norm of $u$ on the right-hand side of \eqref{S2} by $|U|^{\frac 1m- \frac 1\alpha}\|u\|_{L^\alpha}$, we obtain
\begin{multline}\label{S11}
\int_U |u|^{\alpha+r}dx
\le \varepsilon \int_U |u|^{\alpha-s}|\nabla u|^p dx+ \varepsilon^{-\frac \theta{1-\theta}} 2^\frac{\theta_0(\alpha+r)}{1-\theta}(c_3 m)^{\frac{\theta_0 (\alpha+r)}{m}\frac 1{1-\theta}} \|u\|_{L^\alpha}^{(1-\theta_0)(\alpha+r)\frac 1{1-\theta}}\\  
 +2^{\theta_0 (\alpha+r)}c_4^\frac{\theta_0 (\alpha+r)}{m}|U|^{\theta_0 (\alpha+r)(\frac 1m- \frac 1\alpha)}\|u\|_{L^\alpha}^{\alpha+r}.
\end{multline}
Re-calculations of the powers:
$$\theta_0(\alpha+r)=\theta(\alpha-s+p), \quad \theta_0(\alpha+r)/m=\theta p,
$$
\begin{align*}
(1-\theta_0)(\alpha+r)\frac 1{1-\theta}
&=\Big(1-\theta\frac{\alpha-s+p}{\alpha+r}\Big)\frac{\alpha+r}{1-\theta}=\frac{(1-\theta)\alpha+r+\theta(s-p)}{1-\theta}\\
&=\alpha+\frac{r+\theta(s-p)}{1-\theta}=\alpha+\midx_1,
\end{align*}
\begin{align*}
\theta_0 (\alpha+r)(\frac 1m- \frac 1\alpha)
&=\theta(\alpha-s+p)\Big(\frac{p}{\alpha-s+p}-\frac1\alpha\Big)
=\frac{\theta(\alpha(p-1)+s-p)}{\alpha}.
\end{align*}
Thus, inequality \eqref{S10} follows \eqref{S11}.

(iii) Define 
\beq\label{rdef} 
r=\frac{s-p}{p-1}.
\eeq

Given $\varepsilon>0$. First, we apply inequality \eqref{trace10}, and then estimate the last integral $\int_U |u|^{\alpha+r}dx$
in \eqref{trace10} by  using \eqref{S10} with the parameter $\varepsilon$ in  \eqref{S10} being set as
\beqs
\varepsilon (c_2\alpha)^{-\frac p{p-1}} \varepsilon^{\frac 1{p-1}} =\varepsilon^{\frac p{p-1}}(c_2 \alpha)^{-\frac p{p-1}}
=(\varepsilon^{-1}c_2\alpha)^{-\frac p{p-1}}. 
\eeqs 
This results in \begin{multline*}
\int_\Gamma u^\alpha d\sigma 
\le 2\varepsilon \int_U u^{\alpha-s}|\nabla u|^p dx + c_1 \int_U u^\alpha dx \\
\quad  + (c_2 \alpha)^\frac p{p-1} \varepsilon^{-\frac 1{p-1}}   (\varepsilon^{-1}c_2\alpha)^{\frac p{p-1}\frac \theta{1-\theta}} 2^\frac{\theta(\alpha+s-p)}{1-\theta}(c_3 m)^{\frac{\theta p}{1-\theta}} \|u\|_{L^\alpha}^{\alpha+\midx_1}\\ 
\quad + \varepsilon^{-\frac 1{p-1}}  (c_2 \alpha)^\frac p{p-1}  2^{\theta (\alpha-s+p)}c_4^{\theta p}|U|^\frac{\theta(\alpha(p-1)+s-p)}{\alpha}\|u\|_{L^\alpha}^{\alpha+r},
\end{multline*}
where $\theta$ is defined in \eqref{mdef}, which is the same as in \eqref{thetaorig} since $r$ is now specified by \eqref{rdef}.
Therefore \eqref{trace30} follows. Finally, we check the conditions on the exponents in (i) and (ii) to validate our calculations. 
Since $\alpha\ge s>p>1$, we have $r>0$, $(p-s)/(p-1)<0$, and only need to check  $\alpha>\frac{n(r+s-p)}{p}$. With $r$ defined by \eqref{rdef}, this, in fact, is $\alpha>n(s-p)/(p-1)$, which is already one of the assumptions on $\alpha$. 
The proof is complete.
\end{proof}


In our particular case, we have following lemma. Define
\beq \label{muzero}
\delta=1-\lambda\in [0,1),\quad \alpha_*=n(a-\delta)/(2-a)\quad \text{and}\quad \mu_0=\frac{a-\delta}{1-a}.
\eeq


\begin{lemma}\label{newtrace}
Assume $a>\delta$, $\alpha\ge 2-\delta$ and $\alpha> n\mu_0$.
Let $c_*=\max\{c_1,c_2,c_3,c_4\}$ with $c_1,c_2,c_3,c_4$ in Lemma \ref{gentrace}, and 
\beq
\label{theta}
\theta=\theta_\alpha \eqdef \frac{1}{(1-a)(\alpha/\alpha_*-1)} \in (0,1).
\eeq
Then one has for any $\varepsilon>0$ that
 \beq\label{trace-thm1}
  \begin{split}
   \int_\Gamma |u|^\alpha d\sigma 
&\le 2\varepsilon \int_U |u|^{\alpha+\delta-2}|\nabla u|^{2-a} dx 
+ c_*\norm{u}_{L^\alpha(U)}^\alpha\\
 &\quad +  D_{3,\alpha}\varepsilon^{-\frac  1{1-a}} \norm{u}_{L^\alpha(U)}^{\alpha+\mu_0}
 +  D_{4,\alpha}\varepsilon^{-\midx_2}
 \norm{u}_{L^\alpha(U)}^{\alpha + \midx_1},
  \end{split}
 \eeq
where 
\begin{align}
\label{muteen}
\midx_1&=\midx_{1,\alpha} \eqdef \frac{\mu_0(1+\theta(1-a))}{1-\theta},
\quad
\midx_2=\midx_{2,\alpha}\eqdef \frac1{1-a}+\frac {\theta(2-a)}{(1-\theta)(1-a)} ,\\
\label{newD1} 
D_{3,\alpha}&= 2^{\theta(\alpha+\delta-a)}c_*^\frac{(2-a)(1+\theta(1-a))}{1-a}  \alpha^\frac{2-a}{1-a}|U|^{\frac{(1-a)(\alpha+\mu_0)\theta}{\alpha}},\\
\label{newD2} 
D_{4,\alpha}&= 2^\frac{\theta(\alpha+\delta-a)}{1-\theta}
(c_*\alpha)^\frac{(2-a)(1+\theta(1-a))}{(1-a)(1-\theta)}.
\end{align}
\end{lemma}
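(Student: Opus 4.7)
The plan is to derive this lemma as a direct specialization of Lemma \ref{gentrace}(iii) applied with $p=2-a$ and $s=2-\delta$, followed by a consolidation of the resulting constants. I first need to verify that all hypotheses of part (iii) are met: the inequality $p>1$ reduces to $a<1$, which holds by \eqref{eq9}; the strict inequality $s>p$ becomes $a>\delta$, by assumption; the condition $\alpha\ge s$ becomes $\alpha\ge 2-\delta$, also by assumption; and the lower bound $\alpha>n(s-p)/(p-1)$ becomes $\alpha> n(a-\delta)/(1-a)=n\mu_0$, which is the last hypothesis.

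Next, I would match the parameters. With the chosen $p$ and $s$, the auxiliary exponent $r=(s-p)/(p-1)$ in the proof of Lemma \ref{gentrace}(iii) equals $(a-\delta)/(1-a)=\mu_0$, so the term $\varepsilon^{-1/(p-1)}D_{1,\alpha}\|u\|_{L^\alpha}^{\alpha+r}$ produced by \eqref{trace30} becomes $\varepsilon^{-1/(1-a)}D_{1,\alpha}\|u\|_{L^\alpha}^{\alpha+\mu_0}$. The formula \eqref{thetaorig} gives $\theta=1/\bigl((1-a)(\alpha(2-a)/(n(a-\delta))-1)\bigr)$, and since $\alpha_*=n(a-\delta)/(2-a)$, this is precisely $\theta_\alpha$ in \eqref{theta}. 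Using the identity $s-p=a-\delta=\mu_0(1-a)$, the exponent $\mu_1=(r+\theta(s-p))/(1-\theta)$ from \eqref{mdef} simplifies to $\mu_0\bigl(1+\theta(1-a)\bigr)/(1-\theta)$, matching \eqref{muteen}. The $\varepsilon$-exponent $\frac{1}{p-1}+\frac{p}{p-1}\cdot\frac{\theta}{1-\theta}$ from \eqref{trace30} transforms to $\frac{1}{1-a}+\frac{(2-a)\theta}{(1-a)(1-\theta)}=\mu_2$, again as in \eqref{muteen}.

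Finally, I would collapse the constants $D_{1,\alpha}$ and $D_{2,\alpha}$ of \eqref{d12} into $D_{3,\alpha}$ and $D_{4,\alpha}$. Bounding $c_1,c_2,c_3,c_4\le c_*$ and using $m=(\alpha+\delta-a)/(2-a)\le\alpha$ (which holds whenever $\alpha\ge\mu_0$, a consequence of $\alpha>n\mu_0$), and noting that $\alpha(p-1)+s-p=(1-a)(\alpha+\mu_0)$, I convert $D_{1,\alpha}$ into $D_{3,\alpha}$. For $D_{2,\alpha}$, after the same bounds I collect powers of $c_*\alpha$ via
\begin{equation*}
\frac{2-a}{(1-a)(1-\theta)}+\frac{\theta(2-a)}{1-\theta}=\frac{(2-a)\bigl(1+\theta(1-a)\bigr)}{(1-a)(1-\theta)},
\end{equation*}
which yields exactly the exponent appearing in $D_{4,\alpha}$ in \eqref{newD2}. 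The term $c_1\int_U|u|^\alpha dx$ in \eqref{trace30} is absorbed as $c_*\|u\|_{L^\alpha}^\alpha$.

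There is no real analytical obstacle here, since the substance of the estimate is already carried by Lemma \ref{gentrace}(iii); the only thing requiring care is the exponent bookkeeping to confirm that all $\varepsilon$-powers, $\alpha$-powers, and $c_*$-powers in \eqref{newD1}--\eqref{newD2} arise correctly after bounding each $c_i$ by $c_*$ and using $m\le\alpha$. I would therefore devote most of the write-up to a clean tabulation of these substitutions and a single line confirming each of the three $\varepsilon$-indexed terms on the right-hand side of \eqref{trace-thm1}.
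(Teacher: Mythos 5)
Your proposal follows exactly the same route as the paper's proof: apply Lemma \ref{gentrace}(iii) with $p=2-a$, $s=2-\delta$, verify the hypotheses reduce to those of the lemma, compute that $r=\mu_0$ and the exponents $\theta,\mu_1,\mu_2$ match \eqref{theta}--\eqref{muteen}, and then bound $D_{1,\alpha}\le D_{3,\alpha}$, $D_{2,\alpha}\le D_{4,\alpha}$ using $c_i\le c_*$ and $m\le\alpha$. The only minor slip is attributing $m\le\alpha$ to $\alpha\ge\mu_0$: since $a>\delta$ one has $m\le\alpha\Leftrightarrow\alpha\ge(\delta-a)/(1-a)<0$, so it holds automatically for every $\alpha>0$; this does not affect the correctness of the argument.
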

\begin{proof} 
We apply inequality \eqref{trace30} in Lemma \ref{gentrace} to $p=2-a$ and $s=2-\delta$. We recalculate exponents for these particular values.
Note, $\alpha-s+p=\alpha+\delta-a.$
Then from  \eqref{mdef} and \eqref{rdef},
\beqs
m=\frac{\alpha+\delta-a}{2-a}, 
\quad 
r=\frac{s-p}{p-1}=\frac{a-\delta}{1-a}=\mu_0.
\eeqs
Also, $\theta$ in \eqref{thetaorig} becomes \eqref{theta}, the number $\midx_1$ in \eqref{mdef} is the same as in \eqref{muteen}.
The exponent of $|U|$ is $\theta(\alpha(1-a)+a-\delta)/\alpha=\theta(\alpha(1-a)+(1-a)\mu_0)/\alpha=(1-a)\theta(\alpha+\mu_0)/\alpha$.
The exponent of $\varepsilon^{-1}$ in the last term of \eqref{trace30} is $\midx_2$.
Also using the fact $c_1,c_2,c_3,c_4\le c_*$ and $m\le \alpha$, we  have $D_{1,\alpha}\le D_{3,\alpha}$ and $D_{2,\alpha}\le D_{4,\alpha}$.
Therefore, we obtain \eqref{trace-thm1} from \eqref{trace30}.
\end{proof}

Next is a parabolic multiplicative Sobolev inequality.

\begin{lemma}\label{Parabwk}
Assume
\beq \label{alpcond}
\alpha\ge 2-\delta\quad{and}\quad \alpha > \alpha_*.
\eeq 
If $T>0$, then
\beq\label{parabwki}
\begin{aligned}
\Big(\int_0^T \int_U |u|^{\kappa\alpha}dx dt \Big)^{\frac 1{\kappa\alpha}}
&\le (c_5\alpha^{2-a})^\frac1{\kappa\alpha} \Big(  \int_0^T\int_U |u|^{\alpha+\delta-a}dxdt+\int_0^T\int_U |u|^{\alpha+\delta-2}|\nabla u|^{2-a}dxdt \Big)^{\frac {\tilde \theta}{\alpha+\delta-a}} \\
&\quad \cdot\sup_{t\in[0,T]}\Big( \int_U|u(x,t)|^{\alpha}dx \Big)^{\frac{1-\tilde \theta}{\alpha}},
\end{aligned}
\eeq
where  $c_5\ge 1$ is independent of $\alpha$ and T, and 
\beq \label{kappadef}
\tilde \theta=\tilde \theta_\alpha \eqdef \frac 1{1+\frac{\alpha(2-a)}{n(\alpha+\delta-a)}},\quad 
\kappa=\kappa(\alpha)  \eqdef 1+\frac{2-a}n-\frac{a-\delta}{\alpha}=1+(a-\delta)(\frac1{\alpha_*}-\frac1{\alpha}).
\eeq

In case $U=B_R$ - a ball of radius $R$ - one has
\begin{multline}\label{paraball}
\Big(\int_0^T \int_{B_R} |u|^{\kappa\alpha}dx dt \Big)^{\frac 1{\kappa\alpha}}
\le  [c_6(1+R^{-1})^{2-a}\alpha^{2-a}]^\frac1{\kappa\alpha}\\
\cdot \Big[  \int_0^T\int_{B_R} |u|^{\alpha+\delta-a}dxdt
+\int_0^T\int_{B_R} |u|^{\alpha+\delta-2}|\nabla u|^{2-a}dxdt \Big]^{\frac {\tilde \theta}{\alpha+\delta-a}}\\
 \cdot \sup_{t\in[0,T]}\Big( \int_{B_R}|u(x,t)|^{\alpha}dx \Big)^{\frac{1-\tilde \theta}{\alpha}},
\end{multline}
where $c_6\ge 1$ independent of $\alpha$, $R$ and $T$.
\end{lemma}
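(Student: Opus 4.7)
The strategy is to combine a pointwise-in-time spatial Sobolev--Poincar\'e estimate with an $L^r$-interpolation between $L^\alpha$ and a higher Lebesgue space, and then integrate in $t$. The whole calculation closes because of a single algebraic identity $\tilde\theta\kappa\alpha = mp = \alpha+\delta-a$.

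First, for each fixed $t\in[0,T]$, I would apply the Sobolev--Poincar\'e inequality $\|v\|_{L^{p^*}(U)} \le c_3\|\nabla v\|_{L^p(U)} + c_4\|v\|_{L^p(U)}$ with $p = 2-a$ to the function $v = |u(\cdot,t)|^m$, where $m = (\alpha+\delta-a)/(2-a)$. The assumption $\alpha \ge 2-\delta$ gives $m\ge 1$, so $|u|^m$ lies in $W^{1,p}(U)$ whenever $u$ does, with $|\nabla(|u|^m)| = m|u|^{m-1}|\nabla u|$. Using the identities $(m-1)p = \alpha+\delta-2$ and $mp = \alpha+\delta-a$, then raising to the $p$-th power via \eqref{ee3}, I obtain the pointwise-in-$t$ estimate
$$\|u(t)\|_{L^q(U)}^{mp} \le c\,\alpha^{2-a}\left(\int_U |u|^{\alpha+\delta-2}|\nabla u|^{2-a}\,dx + \int_U |u|^{\alpha+\delta-a}\,dx\right),$$
where $q = p^*m = n(\alpha+\delta-a)/(n-2+a)$; the factor $\alpha^{2-a}$ absorbs $m^p$ up to an absolute constant.

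Next I would interpolate. A short computation shows the hypotheses $\alpha\ge 2-\delta$ and $\alpha>\alpha_*$ yield $\alpha\le\kappa\alpha\le q$ (the upper bound being algebraically equivalent to $\alpha(2-a)\ge n(a-\delta)$, i.e., $\alpha\ge\alpha_*$), so log-convexity of $L^r$-norms gives
$$\|u(t)\|_{L^{\kappa\alpha}(U)}^{\kappa\alpha} \le \|u(t)\|_{L^q(U)}^{\tilde\theta\kappa\alpha}\|u(t)\|_{L^\alpha(U)}^{(1-\tilde\theta)\kappa\alpha},$$
with $\tilde\theta$ determined by $1/(\kappa\alpha) = \tilde\theta/q + (1-\tilde\theta)/\alpha$. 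A direct verification shows this $\tilde\theta$ coincides with the formula in \eqref{kappadef} and, crucially, satisfies $\tilde\theta\kappa\alpha = mp = \alpha+\delta-a$, so that the power of $\|u(t)\|_{L^q}$ above exactly matches the exponent appearing in the pointwise Sobolev bound. Integrating in $t\in[0,T]$, pulling $\sup_t\|u(t)\|_{L^\alpha}^{(1-\tilde\theta)\kappa\alpha}$ outside, invoking the Sobolev bound for $\int_0^T\|u(t)\|_{L^q}^{mp}\,dt$, and finally taking the $(\kappa\alpha)$-th root (with exponents converted by $\tilde\theta/(\alpha+\delta-a) = 1/(\kappa\alpha)$) produces \eqref{parabwki}.

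For the ball case \eqref{paraball}, the only modification is to replace the Sobolev--Poincar\'e inequality by its scale-correct form on $B_R$, obtained by rescaling from $B_1$:
$$\|v\|_{L^{p^*}(B_R)} \le c\bigl(\|\nabla v\|_{L^p(B_R)} + R^{-1}\|v\|_{L^p(B_R)}\bigr) \le c(1+R^{-1})\bigl(\|\nabla v\|_{L^p(B_R)} + \|v\|_{L^p(B_R)}\bigr).$$
Raising to the $p$-th power yields the extra factor $(1+R^{-1})^{2-a}$, and everything else runs as before, giving \eqref{paraball}. The main obstacle in the whole argument is the algebraic bookkeeping: one must verify that the explicit $\tilde\theta$ and $\kappa$ in \eqref{kappadef} satisfy both the admissibility $\alpha\le\kappa\alpha\le q$ and the decisive identity $\tilde\theta\kappa\alpha = mp$, which is what causes the interpolation and Sobolev exponents to agree and allows the estimate to close. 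Once this bookkeeping is in place, the rest is a routine combination of standard tools.
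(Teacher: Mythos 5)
Your proof is correct and follows essentially the same route as the paper's: apply the Sobolev inequality to $|u|^m$ with $m=(\alpha+\delta-a)/(2-a)$ and $p=2-a$, interpolate $L^{\kappa\alpha}$ between $L^\alpha$ and $L^{p^*m}$ using $\tilde\theta$, exploit the identity $\tilde\theta\kappa\alpha=\alpha+\delta-a$, and then integrate in time while pulling the $\sup_t$ factor out; the ball case is handled identically via the scale-corrected Sobolev inequality. The only difference is cosmetic (you apply Sobolev before interpolating, whereas the paper interpolates first), so this is the same argument.
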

\begin{proof}
Note that by definition of $\kappa$ and $\tilde \theta$ we have $\frac 1{\kappa \alpha}=\frac {\tilde \theta}{p_0}+\frac{1-\tilde \theta}{\alpha}$, where $p_0=\frac{n(\alpha+\delta-a)}{n-(2-a)}$. Then interpolation inequality gives 
\beq\label{parestfmos}
\begin{aligned}
\Big( \int_U |u|^{\kappa \alpha}dx \Big)^{\frac 1{\kappa\alpha}}\le \Big(\int_U |u|^{p_0}dx\Big)^{\frac{\tilde \theta}{p_0}}\cdot \Big( \int_U|u|^{\alpha}dx \Big)^{\frac{1-\tilde \theta}{\alpha}}.
\end{aligned}
\eeq
We estimate the first integral on the right-hand side of \eqref{parestfmos}. We recall the following Sobolev's inequality
\begin{align*}
\|w\|_{L^{(2-a)^*}}\le c_7 \Big(\int_U |w|^{2-a}dx + \int_U |\nabla w|^{2-a}dx\Big)^\frac1{2-a},
\end{align*}
where $(2-a)^*$ is  the Sobolev conjugate of $2-a$, and $c_7\ge 1$ is independent of $\alpha$.
Applying this inequality to  $w=|u|^m$ with $m=\frac{\alpha+\delta-a}{2-a}\ge 1,$ we obtain  
\beq\label{uchangew}
 \Big(\int_U |u|^{\frac{n(\alpha+\delta-a)}{n-(2-a)}}dx\Big)^{\frac 1{(2-a)^*}}
\le c_7\Big(\int_U |u|^{\alpha+\delta-a}dx+m^{2-a}\int_U |u|^{\alpha+\delta-2}|\nabla u|^{2-a}dx\Big)^{\frac 1{2-a}}.
\eeq
Note that 
$\kappa \alpha\tilde \theta=\alpha+\delta-a$ and 
$(2-a)^*/p_0=(2-a)/(\alpha+\delta-a)$.
Then raising  both sides  of inequality \eqref{uchangew} by $(2-a)^*\kappa\alpha\tilde \theta/p_0=2-a$ results in
\beq\label{iter1}
\Big(\int_U |u|^{p_0}dx\Big)^{\frac {\kappa\alpha \tilde \theta}{p_0}}\le c_7^{2-a}\Big( \int_U |u|^{\alpha+\delta-a}dx+m^{2-a}\int_U |u|^{\alpha+\delta-2}|\nabla u|^{2-a}dx\Big).
\eeq
Raising  both sides of  \eqref{parestfmos}  to the power $\kappa\alpha$, and using inequality \eqref{iter1},  we get
\begin{align*}
\int_U |u|^{\kappa\alpha}dx 
&\le (c_7m)^{2-a}\Big( \int_U |u|^{\alpha+\delta-a}dx+m^{2-a}\int_U |u|^{\alpha+\delta-2}|\nabla u|^{2-a}dx\Big) \Big( \int_U|u|^{\alpha}dx \Big)^{(1-\tilde \theta)\kappa}.
\end{align*}

Integrating this inequality in $t$ from $0$ to $T$ and taking the supremum of the last integral  for $t\in[0,T]$ yield
\begin{align*}
\int_0^T \int_U |u|^{\kappa\alpha}dx dt 
&\le (c_7m)^{2-a} \Big( \int_0^T\int_U |u|^{\alpha+\delta-a}dxdt+\int_0^T\int_U |u|^{\alpha+\delta-2}|\nabla u|^{2-a}dxdt \Big) \\
&\quad \cdot\sup_{t\in[0,T]}\Big( \int_U|u(x,t)|^{\alpha}dx \Big)^{(1-\tilde \theta){\kappa}}.
\end{align*}
Taking  both sides to the power $\frac{1}{\kappa\alpha}=\frac {\tilde \theta}{\alpha+\delta-a}$, we have
\begin{align*}
\Big(\int_0^T \int_U |u|^{\kappa\alpha}dx dt \Big)^{\frac 1{\kappa\alpha}}
&\le (c_7m)^\frac{2-a}{\kappa \alpha} \Big(  \int_0^T\int_U |u|^{\alpha+\delta-a}dxdt+\int_0^T\int_U |u|^{\alpha+ \delta-2}|\nabla u|^{2-a}dxdt \Big)^\frac {\tilde \theta}{\alpha+\delta-a} \\
&\quad \cdot\sup_{t\in[0,T]}\Big( \int_U|u(x,t)|^{\alpha}dx \Big)^{\frac{1-\tilde \theta}{\alpha}}.
\end{align*}
Note that $m<\alpha$, then we obtain \eqref{parabwki} with $c_5=c_7^{2-a}$.

Consider the case $U=B_R$. The Sobolev inequality for $B_R$ is
\beq\label{BR}
\|w\|_{L^{(2-a)^*}(B_R)}
\le c_8 R^{-1}\Big(\int_{B_R} |w|^{2-a}dx\Big)^{\frac 1{2-a}}+c_8\Big(\int_{B_R} |\nabla w|^{2-a}dx\Big)^{\frac 1{2-a}},
\eeq
where $c_8\ge 1$ is independent of $R$. Repeating the above proof with $c_7$ replaced by $c_8(1+R^{-1})$, we obtain \eqref{parabwki} with 
$c_5$ replaced by $[c_8(1+R^{-1})]^{2-a}$. Therefore, we obtain \eqref{paraball} with $c_6=c_8^{2-a}$.
\end{proof}

It is noteworthy that the explicit exponents and constants in Lemmas \ref{newtrace} and \ref{Parabwk} play an important role in Moser's iterations in sections \ref{Linterior} and \ref{Lglobal}.


\myclearpage    
\section{$L^\alpha$-estimates}\label{Lalpha}

We start studying problem \eqref{rho:eq}.
Hereafter, $u(x,t)$ denotes a non-negative solution of \eqref{rho:eq}.

Regarding the nonlinearity of the PDE in \eqref{rho:eq}, we recall that  the function $K(\xi)$  has the following properties: it is decreasing in $\xi$,  maps $[0,\infty)$ onto $(0,\frac{1}{a_0}]$,
 and
  \begin{align}
\frac{d_1}{(1+\xi)^a}\le K(\xi)\le \frac{d_2}{(1+\xi)^a},\label{i:ineq1} \\ 
  d_3(\xi^{2-a}-1)\le K(\xi)\xi^2\le d_2\xi^{2-a}, \label{i:ineq2}
  \end{align}
  where $d_1, d_2, d_3$ are positive constants depending on $g(s)$. (See \cite{ABHI1} for the proof.)

\textbf{Notation.} 
The symbol $C$ denotes a generic positive constant with varying values in different places, while $C_1,C_2,\ldots$ and $c_1,c_2,\ldots$ have their values fixed.
The constants $C$, $\hat C$, $C_i$, $c_j$, for $i,j=1,2,3,\ldots,$ used in calculations can, otherwise stated,  implicitly  depend on number $\lambda$, function $g$, the space dimension $n$, and $U$. 
We also use the positive and negative parts notation $f^+=\max\{f,0\}$ and $f^-=\max\{-f,0\}$, and brief notation $\|\varphi(t)\|_{L^\infty(\Gamma)}$ for $\|\varphi(\cdot,t)\|_{L^\infty(\Gamma)}$.

Recall $\delta$, $\alpha_*$ and $\mu_0$ are defined in \eqref{muzero}.
We assume hereafter that
\beq\label{ad}
a>\delta,\text{ i.e., the number $\mu_0$  is positive.}
\eeq

The case $a\le \delta$ is much simpler, see Remark \ref{asmall} below.
We begin with a differential inequality for $\int_U u^\alpha dx$.

\begin{lemma} \label{dalpha}
Assume  $\alpha\ge 2-\delta$ and $\alpha> n\mu_0$. 
Then
\begin{multline}\label{est10}
\frac{d }{dt}\int_U u^{\alpha}(x,t) dx +C_1\int_U  |\nabla u(x,t)|^{2-a}u^{\alpha+\delta-2}(x,t)dx
\le C\|  u(t)\|_{L^{\alpha}(U)}^{\alpha+\delta-2}
 + C \norm{\varphi^-(t)}_{L^\infty(\Gamma)} \norm{u(t)}_{L^\alpha(U)}^\alpha\\
+C\norm{\varphi^-(t)}_{L^\infty(\Gamma)}^{\frac  {2-a}{1-a}} \norm{u(t)}_{L^\alpha(U)}^{\alpha+\mu_0} +  C\norm{\varphi^-(t)}_{L^\infty(\Gamma)}^\frac{2-a}{(1-a)(1-\theta)}
 \norm{u(t)}_{L^\alpha(U)}^{\alpha + \midx_1},
\end{multline}
with $\midx_1$ and $\theta$ in Lemma \ref{newtrace}, where the positive constants $C_1=C_{1,\alpha}$ and $C>0$ depend on $\alpha$.
Consequently, there is $C_2=C_{2,\alpha}>0$ such that
\beq\label{rho:est0}
\frac{d }{dt}\int_U u^{\alpha} dx+C_1\int_U  |\nabla u|^{2-a}u^{\alpha+\delta-2}dx\le C_2 \Big( 1+\norm{\varphi^-(t)}_{L^\infty(\Gamma)} ^\frac{2-a}{(1-a)(1-\theta)} \Big) \Big(1+\int_U u^{\alpha} dx\Big)^{1+\midx_1/\alpha}.
\eeq
\end{lemma}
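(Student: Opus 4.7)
The strategy is to test the PDE in \eqref{rho:eq} against the multiplier $u^{\alpha-\lambda}$, rewrite the boundary contribution via the Robin condition, and absorb the resulting boundary integral using the trace inequality \eqref{trace-thm1}. Since $\alpha\ge 2-\delta = 1+\lambda$, the multiplier is admissible, and the identity $(u^\lambda)_t\, u^{\alpha-\lambda} = (\lambda/\alpha)\,(d/dt)\,u^\alpha$ together with integration by parts on $U$ yields
\begin{equation*}
\frac{\lambda}{\alpha}\frac{d}{dt}\int_U u^\alpha\,dx \;+\; (\alpha-\lambda)\int_U K(|\nabla u|)|\nabla u|^2 \,u^{\alpha+\delta-2}\,dx \;=\; -\int_\Gamma \varphi\, u^\alpha\,d\sigma,
\end{equation*}
where the boundary term comes from substituting $K(|\nabla u|)(\partial u/\partial\vec\nu)=-\varphi u^\lambda$ from \eqref{rho:eq}.

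Next I use the lower bound \eqref{i:ineq2} to replace the dissipation integral by $C_1\int_U|\nabla u|^{2-a} u^{\alpha+\delta-2}\,dx$ at the cost of a lower-order remainder of the form $C\int_U u^{\alpha+\delta-2}\,dx$; since $0\le \alpha+\delta-2<\alpha$, H\"older's inequality controls this remainder by $C\|u(t)\|_{L^\alpha}^{\alpha+\delta-2}$, which is the first term on the right of \eqref{est10}. For the boundary term, I estimate $-\int_\Gamma \varphi\,u^\alpha\,d\sigma \le \|\varphi^-(t)\|_{L^\infty(\Gamma)}\int_\Gamma u^\alpha\,d\sigma$ and apply Lemma \ref{newtrace}, whose hypotheses are precisely the standing assumptions $\alpha\ge 2-\delta$ and $\alpha>n\mu_0$. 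Choosing the parameter $\varepsilon$ in \eqref{trace-thm1} equal to a small multiple of $1/\|\varphi^-(t)\|_{L^\infty(\Gamma)}$ (or to any fixed positive constant when $\varphi^-\equiv 0$) allows the $\varepsilon$-term of the trace inequality to be absorbed into the coercive dissipation $C_1\int|\nabla u|^{2-a}u^{\alpha+\delta-2}\,dx$ on the left.

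The absorption produces three remainder terms, each carrying a power of $\|\varphi^-(t)\|_{L^\infty(\Gamma)}$ that one reads off by combining the outer factor $\|\varphi^-\|$ with the negative power of $\varepsilon$ that appears in \eqref{trace-thm1}. For the term with $\|u\|_{L^\alpha}^{\alpha+\midx_1}$ this produces exponent $1+\midx_2$, and the algebraic identity
\begin{equation*}
1+\midx_2 \;=\; \frac{2-a}{1-a}\Big(1+\frac{\theta}{1-\theta}\Big) \;=\; \frac{2-a}{(1-a)(1-\theta)}
\end{equation*}
matches exactly the exponent displayed in \eqref{est10}; the exponents in front of $\|u\|_{L^\alpha}^\alpha$ and $\|u\|_{L^\alpha}^{\alpha+\mu_0}$ are handled by the same bookkeeping. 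Collecting these estimates yields \eqref{est10}. The consequence \eqref{rho:est0} follows by bounding every power of $\|u\|_{L^\alpha}$ on the right-hand side of \eqref{est10} by $1+\|u\|_{L^\alpha}^{\alpha+\midx_1}$ via \eqref{ee5} (since $\alpha+\delta-2$, $\alpha$, and $\alpha+\mu_0$ are all $\le\alpha+\midx_1$), factoring out the common polynomial $\bigl(1+\int_U u^\alpha\,dx\bigr)^{1+\midx_1/\alpha}$, and merging the three $\|\varphi^-\|$-weights into the single dominant power $1+\|\varphi^-(t)\|_{L^\infty(\Gamma)}^{(2-a)/((1-a)(1-\theta))}$ via \eqref{ee4}--\eqref{ee5}.

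The plan itself is essentially forced by the form of Lemma \ref{newtrace}, which is tailored to the exponent $p=2-a$ and the multiplier structure of the doubly nonlinear equation; the main obstacle is therefore not conceptual but arithmetic, namely verifying that the $\varepsilon$-scaling produces precisely the exponents $(2-a)/(1-a)$ and $(2-a)/((1-a)(1-\theta))$ of $\|\varphi^-\|_{L^\infty(\Gamma)}$ shown in \eqref{est10}, and that the remaining lower-order boundary contributions can be absorbed consistently under the standing assumption $a>\delta$.
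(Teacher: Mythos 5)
Your proof is correct and follows essentially the same route as the paper: the multiplier $u^{\alpha-\lambda}$ you use is identical to the paper's $u^{\alpha+\delta-1}$ (since $\delta=1-\lambda$), and the subsequent steps---lower bound via \eqref{i:ineq2}, absorption of the boundary term via Lemma \ref{newtrace} with $\varepsilon\sim 1/\|\varphi^-\|_{L^\infty(\Gamma)}$, the exponent identity $1+\midx_2=\frac{2-a}{(1-a)(1-\theta)}$, and the final collection via \eqref{ee5}---all match the paper's argument.
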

\begin{proof}
 Multiplying both sides of the first equation in \eqref{rho:eq} by $ u^{\alpha+\delta-1}$, integrating over domain $U$, and using integration by parts, we have
 \begin{align}\label{weakform2}
\frac{\lambda}{\alpha}\frac{d }{dt}\int_U u^{\alpha} dx&= \int_U \nabla\cdot \left(K(|\nabla u|)\nabla u \right) u^{\alpha+\delta-1} dx \nonumber\\
&= - (\alpha-\lambda)\int_U K(|\nabla u|)|\nabla u|^2 u^{\alpha+\delta-2}  dx 
+\int_{\Gamma} K(|\nabla u|)\frac {\partial u}{\partial \vec\nu}  u^{\alpha+\delta-1}   d\sigma \nonumber \\
&= - (\alpha-\lambda)\int_U K(|\nabla u|)|\nabla u|^2 u^{\alpha+\delta-2}  dx-\int_\Gamma u^{\alpha} \varphi d\sigma.
\end{align}

Using relation  $K(\xi)\xi^2\geq d_3(\xi^{2-a}-1) $ in \eqref{i:ineq2},  one has 
\begin{align}\label{term2}
- \int_U K(|\nabla u|)|\nabla u|^2 u^{\alpha+\delta-2}  dx 
&\leq -d_3\int_U  |\nabla u|^{2-a}u^{\alpha+\delta-2}dx+d_3\int_U  u^{\alpha+\delta-2}dx \nonumber\\
&\leq -d_3\int_U  |\nabla u|^{2-a}u^{\alpha+\delta-2}dx+C\|u\|_{L^\alpha}^{\alpha+\delta-2}.
\end{align}

Estimate the last integral of \eqref{weakform2} by using the trace theorem in Lemma \ref{newtrace}, we have  
\begin{multline}\label{term1}
-\int_\Gamma u^{\alpha} \varphi d\sigma 
\le \int_\Gamma u^{\alpha} \varphi^-  d\sigma 
\le \norm{\varphi^-}_{L^\infty(\Gamma)} \int_\Gamma u^{\alpha} d\sigma\\
\le \norm{\varphi^-}_{L^\infty(\Gamma)} \Big \{  2\varepsilon \int_U |u|^{\alpha+\delta-2}|\nabla u|^{2-a} dx 
+ C\norm{u}_{L^\alpha(U)}^\alpha+ C \varep^{-\frac1{1-a}}\norm{u}_{L^\alpha(U)}^{\alpha+\mu_0} +  C\varepsilon^{-\midx_2}
 \norm{u}_{L^\alpha(U)}^{\alpha + \midx_1} \Big\}.
\end{multline}

Combining \eqref{weakform2},\eqref{term2} and  \eqref{term1}, we have
\begin{multline}\label{est1}
\frac{\lambda}{\alpha}\frac{d }{dt}\int_U u^{\alpha} dx
\le  -(\alpha-\lambda)d_3\int_U  |\nabla u|^{2-a}u^{\alpha+\delta-2}dx +C\|u\|_{L^\alpha}^{\alpha+\delta-2}
 +\norm{\varphi^-}_{L^\infty(\Gamma)}\\
 \cdot  \Big\{ 2\varepsilon \int_U |u|^{\alpha+\delta-2}|\nabla u|^{2-a} dx 
+ C\norm{u}_{L^\alpha(U)}^\alpha+  C\varepsilon^{-\frac  1{1-a}} \norm{u}_{L^\alpha(U)}^{\alpha+\mu_0} +  C\varepsilon^{-\midx_2}
 \norm{u}_{L^\alpha(U)}^{\alpha + \midx_1} \Big\}.
\end{multline}%
The case $\norm{\varphi^-(t)}_{L^\infty(\Gamma)}=0$, inequality \eqref{est10} immediately follows \eqref{est1}.
Consider  $\norm{\varphi^-(t)}_{L^\infty(\Gamma)}\ne 0$. Select $\varepsilon = \frac {d_3} {4\norm{\varphi^-}_{L^\infty(\Gamma)}}$ in \eqref{est1}.  Then we obtain
\begin{multline}\label{est0}
\frac{\lambda}{\alpha}\frac{d }{dt}\int_U u^{\alpha} dx
\le  -(\alpha-\lambda-\frac 12)d_3\int_U  |\nabla u|^{2-a}u^{\alpha+\delta-2}dx 
+C\|  u\|_{L^{\alpha}(U)}^{\alpha+\delta-2} 
+C\norm{\varphi^-}_{L^\infty(\Gamma)} \norm{u}_{L^\alpha(U)}^\alpha
\\
+  C\norm{\varphi^-}_{L^\infty(\Gamma)}^{\frac  {2-a}{1-a}} \norm{u}_{L^\alpha(U)}^{\alpha+\mu_0} 
+  C\norm{\varphi^-}_{L^\infty(\Gamma)}^{\midx_2+1} \norm{u}_{L^\alpha(U)}^{\alpha + \midx_1}.
\end{multline}
Note that $\midx_2+1=\frac{2-a}{(1-a)(1-\theta)}$.
Multiplying both sides of \eqref{est0} by $\alpha/\lambda$, we get  inequality  \eqref{est10}.

Now, we prove \eqref{rho:est0}. Denote $\beta= 1+\midx_1/\alpha$. 
Note that, on the right-hand side of \eqref{est10}, the maximum power of $\|u\|_{L^\alpha}$ is $\alpha+\midx_1=\alpha\beta$, and the maximum power of $\norm{\varphi^-}_{L^\infty(\Gamma)}$ is $\frac{2-a}{(1-a)(1-\theta)}$.
By applying inequality \eqref{ee5}, 
$$\|  u\|_{L^{\alpha}(U)}^{\alpha+\delta-2} ,\ 
\norm{u}_{L^\alpha(U)}^\alpha,\
 \norm{u}_{L^\alpha(U)}^{\alpha+\mu_0} 
\le 1+\|u\|_{L^\alpha}^{\alpha \beta},$$
$$\norm{\varphi^-}_{L^\infty(\Gamma)} ,\
\norm{\varphi^-}_{L^\infty(\Gamma)}^{\frac  {2-a}{1-a}}\le 1+\norm{\varphi^-}_{L^\infty(\Gamma)}^\frac{2-a}{(1-a)(1-\theta)}.$$
Therefore, it follows from  \eqref{est10} that
\begin{multline*}
\frac{d }{dt}\int_U u^{\alpha} dx+C_1\int_U  |\nabla u|^{2-a}u^{\alpha+\delta-2}dx
\le  C(1+ \norm{\varphi^-}_{L^\infty(\Gamma)}^\frac{2-a}{(1-a)(1-\theta)})(1+\|u\|_{L^\alpha}^{\alpha\beta})\\
\le C(1+ \norm{\varphi^-}_{L^\infty(\Gamma)}^\frac{2-a}{(1-a)(1-\theta)})(1+\|u\|_{L^\alpha}^\alpha)^\beta.
\end{multline*}
Thus, inequality  \eqref{rho:est0} follows.   
\end{proof}

\begin{remark}\label{asmall}
In case $a\le\delta$, by combining \eqref{weakform2} and the simple trace inequality \eqref{trace10}, we can find
\begin{align}\label{weakform3}
\frac{\lambda}{\alpha}\frac{d }{dt}\int_U u^{\alpha} dx&= \int_U \nabla\cdot \left(K(|\nabla u|)\nabla u \right) u^{\alpha+\delta-1} dx \nonumber\\
&= - C_1\int_U K(|\nabla u|)|\nabla u|^2 u^{\alpha+\delta-2}  dx 
+C_\varphi\Big( \int_U |u|^\alpha dx  + \int_U |u|^{\alpha+\mu_0} dx\Big),
\end{align}
where $C_\varphi>0$ depends on the function $\varphi$. Since $\mu_0\le0$ in this case, we can apply H\"older's inequality and easily derive a differential inequality for $\int_U u^{\alpha} dx$, and consequently obtain its estimates. Hence, there is no need to use more involved trace inequality \eqref{trace30}, which is essential in the proof of Lemma \ref{dalpha} for the case $a>\delta$. Therefore, we refer to $a>\delta$ as the super-critical case, and $a<\delta$ as the sub-critical case.
Note that papers \cite{Tsutsumi1988,ManfrediVespri} fall into the latter case.

\end{remark}

We now have local (in time) estimates for solutions.
\begin{theorem}\label{est-sol}
Assume $\alpha\ge 2-\delta$ and $\alpha> n\mu_0$.
\begin{enumerate}
\item  If $T>0$ satisfies
\beq\label{T1}
\int_0^T (1+\norm{\varphi^-(t)}_{L^\infty(\Gamma)} ^\frac{2-a}{(1-a)(1-\theta)}) dt < \frac {1}{C_3}  \Big(1+\int_U u_0^{\alpha}(x) dx\Big)^{-\midx_1/\alpha},
\eeq
where  $C_3=C_{3,\alpha}\eqdef C_2\midx_1/\alpha>0$,
then for all $t\in(0,T]$:
\beq\label{rho:est1}
 \int_U u^{\alpha}(x,t) dx \le  \Big\{  \Big( 1+\int_U u_0^{\alpha}(x) dx\Big)^{-\midx_1/\alpha}  - C_3 \int_0^t (1+\norm{\varphi^-(\tau)}_{L^\infty(\Gamma)} ^\frac{2-a}{(1-a)(1-\theta)}) d\tau  \Big\}^{-\frac\alpha{\midx_1} }.
\eeq 

\item  Consequently, if $T>0$ satisfies 
\beq\label{TesT}
\int_0^T (1+\norm{\varphi^-(t)}_{L^\infty(\Gamma)} ^\frac{2-a}{(1-a)(1-\theta)}) dt \le \frac {1-2^{-\midx_1/\alpha}}{C_3}  \Big(1+\int_U u_0^{\alpha}(x) dx\Big)^{-\midx_1/\alpha},
\eeq
then 
\beq\label{rho:est2}
 \int_U u^{\alpha}(x,t) dx \le  2 \left(1+\int_U u_0^{\alpha}(x) dx\right)
\quad\text{for all }t\in(0,T],
\eeq
\beq\label{mixedterm}
 \int_0^T \int_U u^{\alpha+\delta-2}(x,t)|\nabla u(x,t)|^{2-a}dx dt
 \le C \Big (1+\int_U u_0^{\alpha}(x) dx\Big ),
\eeq
where  $C>0$ depends on $\alpha$.
\end{enumerate}
\end{theorem}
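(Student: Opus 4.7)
The plan is to treat Lemma \ref{dalpha}, specifically the differential inequality \eqref{rho:est0}, as a Bernoulli-type ODE for the quantity $y(t) = 1 + \int_U u^\alpha(x,t) dx$ and then integrate it explicitly. Let $h(t) = 1 + \norm{\varphi^-(t)}_{L^\infty(\Gamma)}^{(2-a)/((1-a)(1-\theta))}$. Dropping the nonnegative gradient term on the left side of \eqref{rho:est0}, we obtain $y'(t) \le C_2 h(t) y(t)^{1+\mu_1/\alpha}$. Since $\mu_1 > 0$, we may rewrite this as
\beqs
\frac{d}{dt}\bigl( y(t)^{-\mu_1/\alpha} \bigr) \ge -\frac{C_2 \mu_1}{\alpha}\, h(t) = -C_3\, h(t),
\eeqs
where $C_3 = C_{3,\alpha}$ is exactly the constant in the statement.

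For part (i), I would integrate the last display from $0$ to $t$, yielding
\beqs
y(t)^{-\mu_1/\alpha} \ge y(0)^{-\mu_1/\alpha} - C_3 \int_0^t h(\tau)\, d\tau.
\eeqs
The smallness assumption \eqref{T1} is precisely what is needed to ensure the right-hand side remains strictly positive on $[0,T]$, so we may raise both sides to the power $-\alpha/\mu_1$ (which reverses the inequality) and recover \eqref{rho:est1}. The only subtlety here is to check that the exponent manipulation is legitimate, which reduces to the positivity condition guaranteed by \eqref{T1}.

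For part (ii), assumption \eqref{TesT} gives $C_3 \int_0^T h(\tau) d\tau \le (1 - 2^{-\mu_1/\alpha}) y(0)^{-\mu_1/\alpha}$, so the expression inside the braces in \eqref{rho:est1} is at least $2^{-\mu_1/\alpha} y(0)^{-\mu_1/\alpha}$. Raising to the power $-\alpha/\mu_1$ gives $y(t) \le 2 y(0)$, which is \eqref{rho:est2}. For the gradient bound \eqref{mixedterm}, I would now keep the dissipation term and integrate \eqref{rho:est0} in time from $0$ to $T$:
\beqs
y(T) - y(0) + C_1 \int_0^T \!\!\!\int_U |\nabla u|^{2-a} u^{\alpha+\delta-2}\, dx\, dt \le C_2 \int_0^T h(t)\, y(t)^{1+\mu_1/\alpha}\, dt.
\eeqs
Using the already-established bound $y(t) \le 2y(0)$ and then \eqref{TesT} to estimate $\int_0^T h(t) dt \le C y(0)^{-\mu_1/\alpha}$, the right-hand side is bounded by $C\, y(0)^{1+\mu_1/\alpha} \cdot y(0)^{-\mu_1/\alpha} = C y(0)$. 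Since $y(T) \ge 1$, moving the $y(T) - y(0)$ term to the right yields \eqref{mixedterm}.

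I do not expect any serious obstacle here; the argument is essentially a textbook Bernoulli integration followed by substitution. The most delicate point is ensuring the constants $C_3$ and the threshold in \eqref{TesT} line up so that the denominator $y(0)^{-\mu_1/\alpha} - C_3 \int h$ is bounded below by $2^{-\mu_1/\alpha} y(0)^{-\mu_1/\alpha}$ on $[0,T]$, which dictates the precise numerical factor $1 - 2^{-\mu_1/\alpha}$ appearing in the smallness condition.
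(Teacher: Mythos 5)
Your proposal is correct and follows essentially the same route as the paper: set $V(t)=1+\int_U u^\alpha\,dx$, read \eqref{rho:est0} as a Bernoulli differential inequality, integrate it to get \eqref{rho:est1} under \eqref{T1}, deduce \eqref{rho:est2} from \eqref{TesT}, and then integrate \eqref{rho:est0} again (retaining the dissipation term) together with \eqref{rho:est2} and \eqref{TesT} to obtain \eqref{mixedterm}. The only minor remark is that for the last step one needs only $y(T)\ge 0$ (nonnegativity of $u$), rather than $y(T)\ge 1$, though either suffices.
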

\begin{proof}
(i) Let $V(t)= 1+\int_U u^{\alpha}(x,t) dx$. Then inequality \eqref{rho:est0} can be written as
\beq\label{Vest1}
\frac {d}{dt} V(t) \le C_2 (1+\norm{\varphi^-}_{L^\infty(\Gamma)} ^\frac{2-a}{(1-a)(1-\theta)}) V(t)^{1+\midx_1/\alpha}.
\eeq
Solving this differential inequality gives
\[
V(t)\le \Big\{V(0)^{-\midx_1/\alpha}  - \frac{\midx_1C_2}\alpha\int_0^t (1+\norm{\varphi^-(\tau)}_{L^\infty(\Gamma)} ^\frac{2-a}{(1-a)(1-\theta)}) d\tau\Big\}^{-\alpha/\midx_1 },
\] 
for all $t\in(0,T]$, with $T>0$ satisfying \eqref{T1}. Hence we have \eqref{rho:est1}. 

(ii) When $T>0$ satisfies \eqref{TesT}, inequality \eqref{rho:est2} easily follows \eqref{rho:est1}.

Integrating inequality \eqref{rho:est0} in time and using \eqref{rho:est2}, we have
\beqs
\begin{split}
&\int_0^T \int_U u^{\alpha+\delta-2}(x,t)|\nabla u(x,t)|^{2-a}dx dt \\
&\quad\le C \int_U u_0^{\alpha}(x) dx + C\int_0^T \Big (1+\int_U u^{\alpha}(x,t) dx\Big )^{1+\midx_1/\alpha}  \Big(1+\norm{\varphi^-(t)}_{L^\infty(\Gamma)}^\frac{2-a}{(1-a)(1-\theta)}\Big)dt.\\
&\quad\le C \int_U u_0^{\alpha}(x) dx +C\Big (1+\int_U u_0^{\alpha}(x) dx\Big )^{1+\midx_1/\alpha} \int_0^T \Big(1+\norm{\varphi^-(t)}_{L^\infty(\Gamma)}^\frac{2-a}{(1-a)(1-\theta)}\Big)dt.
\end{split}
\eeqs
Combining this with the bound \eqref{TesT} for the last integral, we obtain \eqref{mixedterm}.
\end{proof}


\myclearpage
\section{Gradient estimates} \label{GradSec}

In this section, we estimate $\int_U |\nabla u|^{2-a}(x,t)dx$ for $t>0$. Same as in \cite{ABHI1}, we will use the following function  
\beq\label{Hdef}
H(\xi)=\int_0^{\xi^2} K(\sqrt{s}) ds \quad \text{for } \xi\ge 0. 
\eeq
The function $H(\xi)$ can be compared with $\xi$ and $K(\xi)$ by
\beq
K(\xi)\xi^2 \le H(\xi)\le 2K(\xi)\xi^2, \label{i:ineq4}
\eeq
and hence, as a consequence of \eqref{i:ineq2} and \eqref{i:ineq4}, we have
\beq
d_3(\xi^{2-a}-1) \le H(\xi) \le 2 d_2\xi^{2-a}. \label{i:ineq5}
\eeq

\begin{proposition}\label{theo41}
Assume 
\beq\label{ac2} \alpha>\max\{ n\mu_0, \lambda+1+\mu_0\}.
\eeq
 If $t>0$ then 
\begin{multline}\label{nablauest}
\int_0^t\int_U u^{1-\lambda}|(u^{\lambda})_t|^2 dxd\tau +\int_U |\nabla u(x,t)|^{2-a} dx\\
\le C\mathcal E_0+C \mathcal{K}(t)
+ C\int_0^t \big(1+\norm{\varphi^-(\tau)}_{L^\infty(\Gamma)}^\frac{2-a}{(1-a)(1-\theta)}\big)
\big(1+\norm{u(\tau)}_{L^\alpha(U)}^{\alpha+\midx_1}\big)d\tau,
\end{multline}
where $C>0$ depends on $\alpha$,
\beq\label{Ezero}
\mathcal E_0 =\int_U u_0^{\alpha}(x) dx + \int_U |\nabla u_0(x)|^{2-a} dx + \int_\Gamma  u_0^{\lambda+1}(x) \varphi^+(x,0)  d\sigma,
\eeq
\beq\label{Kdeff}
\mathcal K(t)=1+\norm{\varphi^-(t)}_{L^\infty(\Gamma)}^{\midx_3 }+\int_0^t\int_\Gamma |\varphi_t(x,\tau)|^{\frac{\alpha}{\alpha-\lambda-1}}d\sigma d\tau,
\eeq
with
\beqs
\midx_3=\midx_{3,\alpha}\eqdef  \frac{(2-a)\alpha}{(1-a)(\alpha-(\lambda+1+\mu_0))}.
\eeqs
\end{proposition}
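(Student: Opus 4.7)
The plan is to test the PDE \eqref{rho:eq} with $u_t$ and integrate over $U$. Using the identities $u_t (u^\lambda)_t = \lambda^{-1} u^{1-\lambda}|(u^\lambda)_t|^2$ on the time-derivative side, and $K(|\nabla u|)\nabla u\cdot\nabla u_t = \tfrac12\partial_t H(|\nabla u|)$ (which follows from the definition \eqref{Hdef} of $H$), together with the Robin boundary condition \eqref{BC} which gives $K(|\nabla u|)\partial_{\vec\nu} u\cdot u_t = -\varphi u^\lambda u_t = -(\lambda+1)^{-1}\varphi (u^{\lambda+1})_t$ on $\Gamma$, I arrive at the fundamental identity
\[
\frac{1}{\lambda}\int_U u^{1-\lambda}|(u^\lambda)_t|^2\,dx+\frac{1}{2}\frac{d}{dt}\int_U H(|\nabla u|)\,dx= -\frac{1}{\lambda+1}\int_\Gamma \varphi\,(u^{\lambda+1})_t\,d\sigma.
\]
Integrating in $\tau\in[0,t]$ and applying integration by parts in time on the right converts that boundary integral into three pieces: a spatial boundary integral at time $t$, one at time $0$, and a space-time integral weighted by $\varphi_\tau$. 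Using \eqref{i:ineq5} to replace $H(|\nabla u|)$ by $|\nabla u|^{2-a}$ puts $\int_U |\nabla u(\cdot,t)|^{2-a}dx$ on the left and $\int_U |\nabla u_0|^{2-a}dx$ (absorbed into $\mathcal E_0$) on the right.

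Next I would treat the three boundary pieces. The term at $\tau=0$ is bounded trivially by $\int_\Gamma \varphi^+(x,0)u_0^{\lambda+1}d\sigma$, the last summand of $\mathcal E_0$. For the term at $\tau=t$, I would estimate $-\int_\Gamma \varphi u^{\lambda+1}d\sigma\le \|\varphi^-(t)\|_{L^\infty(\Gamma)}\int_\Gamma u^{\lambda+1}d\sigma$ and apply Lemma \ref{gentrace}(i) with the \emph{specific} choice $\alpha=s=\lambda+1$, $p=2-a$, which produces $\varepsilon\int_U|\nabla u|^{2-a}dx$ (note $\alpha-s=0$ kills the $u$-weight) plus $C\int_U u^{\lambda+1}\,dx + C\varepsilon^{-1/(1-a)}\int_U u^{\lambda+1+\mu_0}dx$, where $(s-p)/(p-1)=\mu_0$. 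By hypothesis $\alpha>\lambda+1+\mu_0$, so both $L$-integrals are dominated by $1+\|u\|_{L^\alpha}^{\alpha}$, and choosing $\varepsilon\sim \|\varphi^-(t)\|_{L^\infty(\Gamma)}^{-1}$ lets the $\int_U|\nabla u|^{2-a}dx$ be absorbed by the LHS, at the cost of a $C(1+\|\varphi^-(t)\|_{L^\infty(\Gamma)}^{(2-a)/(1-a)})(1+\|u(t)\|_{L^\alpha}^\alpha)$ term. For the $\varphi_\tau$-integral I would apply Young's inequality with conjugate exponents $\alpha/(\alpha-\lambda-1)$ and $\alpha/(\lambda+1)$ (here $\alpha>\lambda+1$ from \eqref{ac2} is essential) to split it into $\int_0^t\int_\Gamma|\varphi_\tau|^{\alpha/(\alpha-\lambda-1)}d\sigma d\tau$, which is a summand of $\mathcal K(t)$, and $C\int_0^t\int_\Gamma u^\alpha\,d\sigma d\tau$.

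To handle the remaining space-time boundary integral $\int_0^t\int_\Gamma u^\alpha d\sigma d\tau$, I would apply the full trace inequality \eqref{trace-thm1} of Lemma \ref{newtrace} at each time with a fixed (e.g. unit) $\varepsilon$, which brings up the weighted gradient integral $\int_0^t\int_U u^{\alpha+\delta-2}|\nabla u|^{2-a}dx d\tau$ and lower-order $L^\alpha$ powers. The crucial closure step is to bound that weighted gradient integral via the \emph{time-integrated} form of \eqref{rho:est0} from Lemma \ref{dalpha}: integrating that inequality from $0$ to $t$ yields precisely
\[
\int_0^t\!\!\int_U u^{\alpha+\delta-2}|\nabla u|^{2-a}dx\,d\tau \le C\!\int_U u_0^\alpha dx + C\!\int_0^t\!(1+\|\varphi^-\|_{L^\infty(\Gamma)}^{(2-a)/((1-a)(1-\theta))})(1+\|u\|_{L^\alpha}^{\alpha+\mu_1})\,d\tau,
\]
using $(1+\|u\|_{L^\alpha}^\alpha)^{1+\mu_1/\alpha}\le C(1+\|u\|_{L^\alpha}^{\alpha+\mu_1})$. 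Collecting every estimate and using $1+x^{(2-a)/(1-a)}\le 1+x^{(2-a)/((1-a)(1-\theta))}$ (since $1-\theta<1$) to unify the $\varphi^-$ exponents yields \eqref{nablauest}.

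The main obstacle is the correct trace inequality for the pointwise-in-time term at $\tau=t$: one \emph{must not} use Lemma \ref{newtrace} there, because the resulting weighted gradient $\int_U u^{\alpha+\delta-2}|\nabla u|^{2-a}dx$ at a single time $t$ is not controlled by either the unweighted LHS gradient or by the time-integrated $L^\alpha$ theory. Lemma \ref{gentrace}(i) with the precise exponent choice $\alpha=s=\lambda+1$ is exactly tailored to produce the \emph{unweighted} $\int_U|\nabla u|^{2-a}dx$ that can be absorbed, and the hypothesis $\alpha>\lambda+1+\mu_0$ in \eqref{ac2} is exactly what makes the residual $L^{\lambda+1+\mu_0}$-integral controllable. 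Once this choice is in place, the remaining bookkeeping of exponents is routine, and the characteristic exponent $(2-a)/((1-a)(1-\theta))$ emerges naturally by borrowing the $L^\alpha$-differential inequality.
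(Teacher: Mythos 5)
Your reduction to the energy identity, the integration-by-parts in time, the identification of $\mathcal E_0$ from the $\tau=0$ terms, the trace step at $\tau=t$ via Lemma~\ref{gentrace}(i) with $\alpha=s=\lambda+1$ and $p=2-a$, and the Young splitting of the $\varphi_\tau$ integral all match the paper's strategy. The divergence is in how you close the argument, and there is a genuine gap there.

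You integrate the energy identity in time \emph{first}, and then propose to handle the residual terms by using the \emph{time-integrated} form of \eqref{rho:est0}. That works for the space-time boundary integral $\int_0^t\int_\Gamma u^\alpha\,d\sigma\,d\tau$. But at $\tau=t$ you explicitly record the cost as
\[
C\big(1+\|\varphi^-(t)\|_{L^\infty(\Gamma)}^{(2-a)/(1-a)}\big)\big(1+\|u(t)\|_{L^\alpha(U)}^\alpha\big),
\]
and this single-time $\|u(t)\|_{L^\alpha(U)}^\alpha$ does not belong to the right-hand side of \eqref{nablauest}: $\mathcal E_0$ and $\mathcal K(t)$ depend only on data, and the third term is a time integral. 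You cannot absorb $\|u(t)\|_{L^\alpha(U)}^\alpha$ into your left-hand side (you only have $\int_U H(|\nabla u(t)|)\,dx$ there), and substituting the integrated bound for $\int_U u^\alpha(t)\,dx$ produces a cross product $\|\varphi^-(t)\|_{L^\infty(\Gamma)}^{(2-a)/(1-a)}\cdot\int_0^t(\dots)\,d\tau$, which still does not fit \eqref{nablauest}.

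The paper's actual mechanism is different: it adds the \emph{differential} inequality \eqref{est10} of Lemma~\ref{dalpha} to the energy identity \eqref{part1} \emph{before} integrating in time, producing the quantity $\mathcal E(t)=\tfrac{\lambda+1}{2}\int_U H(|\nabla u|)\,dx+\int_\Gamma u^{\lambda+1}\varphi\,d\sigma+\int_U u^\alpha\,dx$. The crucial point is that $\int_U u^\alpha\,dx$ now appears on the left-hand side of the integrated identity, so when Young's inequality on the $\tau=t$ trace estimate throws off $\tfrac14\int_U u^\alpha(t)\,dx+\tfrac14\int_U u^\alpha(t)\,dx$, those are absorbed, leaving $\tfrac12\int_U u^\alpha(t)\,dx$ harmlessly on the left. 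This is also where the exponent $\mu_3=\frac{(2-a)\alpha}{(1-a)(\alpha-(\lambda+1+\mu_0))}$ is generated, via Young with the conjugate pair $\alpha/(\lambda+1+\mu_0)$ and $\alpha/(\alpha-\lambda-1-\mu_0)$ together with the choice $\varepsilon\sim (1+\|\varphi^-\|_{L^\infty})^{-1}$. As a side benefit, the added $C_1\int_U u^{\alpha+\delta-2}|\nabla u|^{2-a}\,dx$ from \eqref{est10} directly absorbs the weighted gradient produced by the trace inequality \eqref{trace-thm1}, so the separate passage through the integrated \eqref{rho:est0} that you use for $\int_0^t\int_\Gamma u^\alpha\,d\sigma\,d\tau$ becomes unnecessary. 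To repair your argument, you should incorporate $\int_U u^\alpha\,dx$ into the conserved quantity \emph{before} integrating in $t$, exactly as the paper does by summing with \eqref{est10}.
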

\begin{proof} 
Multiplying both sides of the PDE in \eqref{rho:eq} by  
$u_t= \frac 1\lambda (u^\lambda)_t u^{1-\lambda}$, integrating over $U$ and using the boundary condition, we obtain
\beq\label{base}\begin{split}
\frac 1{\lambda} \int_U  u_t(u^{\lambda})_t dx +\frac 1 2 \frac d {dt}\int_U H(|\nabla u(x,t)|) dx  &= -\int_\Gamma  \varphi u^\lambda  u_t  d\sigma\\
&= -\frac 1 {\lambda+1}\frac d {dt} \int_\Gamma  u^{\lambda+1} \varphi  d\sigma + \frac 1 {\lambda+1}\int_\Gamma  u^{\lambda+1} \varphi_t d\sigma.
\end{split}
\eeq
Multiplying the equation by $\lambda+1$, and applying  Young's inequality to the last  integral yield
\begin{multline}\label{base2}
\frac{\lambda+1}{\lambda}\int_U u^{1-\lambda}[(u^{\lambda})_t]^2 dx +\frac d {dt}\Big(\frac{\lambda+1}2\int_U H(|\nabla u(x,t)|) dx + \int_\Gamma  u^{\lambda+1} \varphi  d\sigma\Big)= \int_\Gamma  u^{\lambda+1}\varphi_t d\sigma\\
\le  \int_\Gamma u^{\alpha} d\sigma+\int_\Gamma |\varphi_t|^{\frac{\alpha}{\alpha-\lambda-1}}d\sigma .
\end{multline}
To estimate the second to last boundary integral, we use the trace inequality \eqref{trace-thm1} in Lemma \ref{newtrace}:\begin{align*}
   \int_\Gamma |u|^\alpha d\sigma 
&\le 2\varepsilon \int_U |u|^{\alpha+\delta-2}|\nabla u|^{2-a} dx 
+ C\norm{u}_{L^\alpha(U)}^\alpha
+  C\varepsilon^{-\frac  1{1-a}} \norm{u}_{L^\alpha(U)}^{\alpha+\mu_0}
 +  C\varepsilon^{ - \midx_2 }
 \norm{u}_{L^\alpha(U)}^{\alpha+\midx_1},
\end{align*}
for any $\varepsilon>0$. Using this inequality in \eqref{base2},  we have 
\beq
\begin{aligned}\label{part1}
\frac{\lambda+1}{\lambda}\int_U u^{1-\lambda}(u^{\lambda})_t^2 dx &+\frac d {dt}\Big(\frac{\lambda+1}2\int_U H(|\nabla u(x,t)|) dx + \int_\Gamma  u^{\lambda+1} \varphi  d\sigma\Big)\\
&\le 2 \varepsilon \int_U |u|^{\alpha+\delta-2}|\nabla u|^{2-a} dx 
+ C \norm{u}_{L^\alpha(U)}^\alpha
+  C\varepsilon^{-\frac  1{1-a}} \norm{u}_{L^\alpha(U)}^{\alpha+\mu_0}\\
 &\quad +  C\varepsilon^{ - \midx_2 }
 \norm{u}_{L^\alpha(U)}^{\alpha+\midx_1}+\int_\Gamma |\varphi_t|^{\frac{\alpha}{\alpha-\lambda-1}}d\sigma.
\end{aligned}
\eeq
Define 
\beq\label{elamdef}
\mathcal E(t) = \frac{\lambda+1}2\int_U H(|\nabla u(x,t)|) dx + \int_\Gamma  u^{\lambda+1} \varphi  d\sigma+\int_U u^{\alpha} dx.
\eeq
Adding \eqref{est10} to \eqref{part1} yields
\begin{multline}\label{add110}
\frac{\lambda+1}{\lambda}\int_U u^{1-\lambda}(u^{\lambda})_t^2 dx +\frac d {dt}\mathcal E(t)+(C_1-2  \varepsilon) \int_U |u|^{\alpha+\delta-2}|\nabla u|^{2-a} dx \\
\le C(1+  \norm{\varphi^-}_{L^\infty(\Gamma)} )\norm{u}_{L^\alpha(U)}^\alpha+C\|u\|_{L^{\alpha}(U)}^{\alpha+\delta-2}
+  C (\varepsilon^{-\frac  1{1-a}}+  \norm{\varphi^-}_{L^\infty(\Gamma)}^{\frac  {2-a}{1-a}} ) \norm{u}_{L^\alpha(U)}^{\alpha+\mu_0}\\
 +  C( \varepsilon^{ - \midx_2 }+ \norm{\varphi^-}_{L^\infty(\Gamma)}^\frac{2-a}{(1-a)(1-\theta)})
 \norm{u}_{L^\alpha(U)}^{\alpha+\midx_1}+\int_\Gamma |\varphi_t|^{\frac{\alpha}{\alpha-\lambda-1}}d\sigma.
\end{multline}
Choosing $\varepsilon$ sufficiently small such that $C_1-2\varepsilon>0$, we derive 
\begin{multline*}
\frac{\lambda+1}{\lambda}\int_U u^{1-\lambda}(u^{\lambda})_t^2 dx +\frac d {dt}\mathcal E(t) 
\le  C(1+ \norm{\varphi^-}_{L^\infty(\Gamma)} )\norm{u}_{L^\alpha(U)}^\alpha
+C\|u\|_{L^{\alpha}(U)}^{\alpha+\delta-2}\\
+   C(1+\norm{\varphi^-}_{L^\infty(\Gamma)}^{\frac  {2-a}{1-a}} ) \norm{u}_{L^\alpha(U)}^{\alpha+\mu_0}
 + C(1+\norm{\varphi^-}_{L^\infty(\Gamma)}^\frac{2-a}{(1-a)(1-\theta)})
 \norm{u}_{L^\alpha(U)}^{\alpha+\midx_1}+\int_\Gamma |\varphi_t|^{\frac{\alpha}{\alpha-\lambda-1}}d\sigma.
\end{multline*}

Note that $ \alpha+\delta-2 <\alpha < \alpha+\mu_0 < \alpha+\midx_1$ and $1< \frac{2-a}{1-a}<\frac{2-a}{(1-a)(1-\theta)}$. We apply Young's inequality for each norm on the right-hand side and obtain
\begin{align*}
\frac{\lambda+1}{\lambda}\int_U u^{1-\lambda}(u^{\lambda})_t^2 dx +\frac d {dt}\mathcal E(t)\le C\int_\Gamma |\varphi_t|^{\frac{\alpha}{\alpha-\lambda-1}}d\sigma +  C( 1+ \norm{\varphi^-}_{L^\infty(\Gamma)}^\frac{2-a}{(1-a)(1-\theta)})\cdot ( 1+\norm{u}_{L^\alpha(U)}^{\alpha+\midx_1}).
\end{align*}
Let $t\in(0,T)$. Integrating both sides of previous inequality in $t$, we obtain 
\begin{align*}
&\frac{\lambda+1}{\lambda}\int_0^t\int_U u^{1-\lambda}(u^{\lambda})_t^2 dxd\tau +\frac{\lambda+1}2\int_U H(|\nabla u(x,t)|)dx+\int_U u^{\alpha} dx \\
&\le \mathcal E(0) -\int_\Gamma  u^{\lambda+1} \varphi  d\sigma
+C\int_0^t\int_\Gamma |\varphi_t|^{\frac{\alpha}{\alpha-\lambda-1}}d\sigma d\tau
+ C\int_0^t(1+\norm{\varphi^-}_{L^\infty(\Gamma)}^\frac{2-a}{(1-a)(1-\theta)})
 (1+\norm{u}_{L^\alpha(U)}^{\alpha+\midx_1})d\tau.
\end{align*}
For the first integral on the right-hand side, applying inequality \eqref{trace10} in Lemma \ref{gentrace} with $\alpha=s=\lambda+1=2-\delta$ and  $p=2-a$,  we have
\begin{align*}
- \int_\Gamma  u^{\lambda+1} \varphi  d\sigma 
&\le  \|\varphi^-\|_{L^\infty} \int_\Gamma u^{\lambda+1} d\sigma\\
&\le \|\varphi^-\|_{L^\infty}\Big\{ \varepsilon \int_U |\nabla u|^{2-a} dx +C\int_U |u|^{\lambda+1} dx  + C\varepsilon^{-\frac 1{1-a}}\int_U |u|^{\lambda+1+\mu_0} dx \Big\},
\end{align*}
for any $\varepsilon>0$.
  Now, using $ H(|\nabla u|) \ge C(|\nabla u|^{2-a} -1)$ from \eqref{i:ineq5} and applying Young's inequality to the last two integrals  with $\alpha> \lambda+1+\mu_0$ we obtain
\begin{align*}
&- \int_\Gamma  u^{\lambda+1} \varphi  d\sigma 
 \le C_8\varepsilon\|\varphi^-\|_{L^\infty(\Gamma)} \int_U (H(|\nabla u(x,t)|)+1)dx  \\
&\quad
+ (\frac14 \int_U u^{\alpha} dx+C \|\varphi^-\|_{L^\infty(\Gamma)}^\frac\alpha{\alpha-\lambda-1})
+(\frac14 \int_U u^{\alpha} dx+C\{\varepsilon^{-\frac 1{1-a}}\|\varphi^-\|_{L^\infty(\Gamma)}\}^\frac{\alpha}{\alpha-\lambda-1-\mu_0}).
\end{align*}
Selecting $\varep=\frac{\lambda+1}{4C_8(1+\|\varphi^-\|_{L^\infty} )}$, we obtain  
\beqs
\begin{split}
&\frac{\lambda+1}{\lambda}\int_0^t\int_U u^{1-\lambda}(u^{\lambda})_t^2 dxd\tau +\frac{\lambda+1}4\int_U H(|\nabla u(x,t)|) dx+\frac12\int_U u^{\alpha} dx \\
&\le \mathcal E(0)+C+ C\|\varphi^-\|_{L^\infty(\Gamma)}^\frac\alpha{\alpha-\lambda-1}
+C\{(1+\|\varphi^-\|_{L^\infty(\Gamma)})^\frac{1}{1-a}\|\varphi^-\|_{L^\infty(\Gamma)}\}^\frac{\alpha}{\alpha-\lambda-1-\mu_0}\\
&\quad +C\int_0^t\int_\Gamma |\varphi_t|^{\frac{\alpha}{\alpha-\lambda-1}}d\sigma d\tau
+ C\int_0^t (1+\norm{\varphi^-}_{L^\infty(\Gamma)}^\frac{2-a}{(1-a)(1-\theta)})
(1+\norm{u}_{L^\alpha(U)}^{\alpha+\midx_1})d\tau\\
\end{split}
\eeqs
which gives
\begin{multline}\label{prenabla}
\int_0^t\int_U u^{1-\lambda}(u^{\lambda})_t^2 dxd\tau +\int_U H(|\nabla u(x,t)|) dx\\
\le C \Big\{\mathcal E(0)+ \mathcal{K}_1(t)+\int_0^t  (1+\norm{\varphi^-}_{L^\infty(\Gamma)}^\frac{2-a}{(1-a)(1-\theta)})
(1+\norm{u}_{L^\alpha(U)}^{\alpha+\midx_1})d\tau\Big\},
\end{multline}
where 
\begin{align*}
\mathcal{K}_1(t)
&= 1+\|\varphi^-(t)\|_{L^\infty}^\frac{(2-a)\alpha}{(1-a)(\alpha-\lambda-1-\mu_0)}+ \|\varphi^-(t)\|_{L^\infty}^\frac\alpha{\alpha-\lambda-1}+ \int_0^t\int_\Gamma |\varphi_t|^{\frac{\alpha}{\alpha-\lambda-1}}d\sigma d\tau.
\end{align*}
Note that $\mathcal E(0)\le C\mathcal E_0$ and, by Young's inequality, $\mathcal{K}_1(t) \le C\mathcal K(t)$. Hence, we obtain from \eqref{prenabla}
\beq\label{nablauest2}
\begin{split}
&\int_0^t\int_U u^{1-\lambda}(u^{\lambda})_t^2 dxd\tau +\int_U H(|\nabla u(x,t)|) dx\\
&\le C\mathcal E_0+C \mathcal{K}(t)+ C\int_0^t (1+\norm{\varphi^-}_{L^\infty(\Gamma)}^\frac{2-a}{(1-a)(1-\theta)})
(1+\norm{u}_{L^\alpha(U)}^{\alpha+\midx_1})d\tau.
\end{split}
\eeq
Last,  using relation between $H(x,t)$ and $|\nabla u|$ again in \eqref{nablauest2}, we obtain \eqref{nablauest}.
\end{proof}

Now, we combine  Theorem~\ref{est-sol} and Proposition \ref{theo41} to obtain a  gradient  estimate in terms of initial and boundary data. 
\begin{theorem}\label{theo42}
Assume \eqref{ac2}.
If $T>0$ satisfies \eqref{TesT}  then  for all $t\in(0,T]$, one has
\beq\label{estgradin2}
\int_0^t\int_U u^{1-\lambda}|(u^{\lambda})_t|^2 dxd\tau +\int_U |\nabla u(x,t)|^{2-a} dx 
\le C\mathcal E_0 + C\mathcal{K}(t),
\eeq
where $ \mathcal E_0 $ and $\mathcal{K}(t)$ are defined in Proposition \ref{theo41}, and $C>0$ depends on $\alpha$,
\end{theorem}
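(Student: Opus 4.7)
The plan is to combine Proposition \ref{theo41} with the $L^\alpha$-bound from Theorem \ref{est-sol}(ii), using the smallness condition \eqref{TesT} to absorb the residual term on the right-hand side of \eqref{nablauest}.

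First I would invoke Proposition \ref{theo41} directly. Under the assumption \eqref{ac2}, the inequality \eqref{nablauest} gives, for every $t\in(0,T]$,
\begin{equation*}
\int_0^t\int_U u^{1-\lambda}|(u^{\lambda})_t|^2 dxd\tau +\int_U |\nabla u(x,t)|^{2-a} dx \le C\mathcal E_0+C \mathcal{K}(t)+ C\int_0^t \Phi(\tau)\bigl(1+\|u(\tau)\|_{L^\alpha(U)}^{\alpha+\midx_1}\bigr)d\tau,
\end{equation*}
where I abbreviate $\Phi(\tau)=1+\|\varphi^-(\tau)\|_{L^\infty(\Gamma)}^{(2-a)/((1-a)(1-\theta))}$. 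So it remains to estimate this last integral using the hypotheses of the theorem.

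Next I would use the fact that, because the hypothesis \eqref{TesT} of Theorem~\ref{est-sol}(ii) is assumed, estimate \eqref{rho:est2} applies. This gives the pointwise-in-time bound
\begin{equation*}
\|u(\tau)\|_{L^\alpha(U)}^{\alpha+\midx_1}=\bigl(\|u(\tau)\|_{L^\alpha(U)}^\alpha\bigr)^{1+\midx_1/\alpha}\le \bigl(2(1+\|u_0\|_{L^\alpha(U)}^\alpha)\bigr)^{1+\midx_1/\alpha}
\end{equation*}
uniformly for $\tau\in(0,T]$. Factoring this constant outside the time integral yields
\begin{equation*}
\int_0^t \Phi(\tau)\bigl(1+\|u(\tau)\|_{L^\alpha(U)}^{\alpha+\midx_1}\bigr)d\tau \le C\bigl(1+\|u_0\|_{L^\alpha(U)}^\alpha\bigr)^{1+\midx_1/\alpha}\int_0^t \Phi(\tau)\,d\tau.
\end{equation*}

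Finally, I would apply the smallness assumption \eqref{TesT} a second time, this time to the integral $\int_0^t\Phi(\tau)\,d\tau$. Since the right-hand side of \eqref{TesT} carries the factor $(1+\|u_0\|_{L^\alpha}^\alpha)^{-\midx_1/\alpha}$, multiplying by $(1+\|u_0\|_{L^\alpha}^\alpha)^{1+\midx_1/\alpha}$ produces exactly a constant multiple of $(1+\|u_0\|_{L^\alpha}^\alpha)$. That is,
\begin{equation*}
\bigl(1+\|u_0\|_{L^\alpha(U)}^\alpha\bigr)^{1+\midx_1/\alpha}\int_0^t \Phi(\tau)\,d\tau \le C\bigl(1+\|u_0\|_{L^\alpha(U)}^\alpha\bigr) \le C(1+\mathcal E_0),
\end{equation*}
where the last step uses that $\mathcal E_0\ge \int_U u_0^\alpha dx$. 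Since $\mathcal K(t)\ge 1$, the additive constant can be absorbed into $C\mathcal K(t)$, yielding \eqref{estgradin2}. There is no genuine obstacle here: the result is engineered precisely so that the threshold \eqref{TesT} cancels the growth factor $(1+\|u_0\|_{L^\alpha}^\alpha)^{\midx_1/\alpha}$ produced by the nonlinear power $\alpha+\midx_1$, and the only minor subtlety is tracking that the exponents match ($1+\midx_1/\alpha$ from the $L^\alpha$-bound versus $-\midx_1/\alpha$ from \eqref{TesT}).
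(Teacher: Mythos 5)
Your proof is correct and follows essentially the same route as the paper: invoke Proposition \ref{theo41}, replace the running $L^\alpha$-norm of $u$ by the uniform bound from \eqref{rho:est2}, and then let the smallness condition \eqref{TesT} cancel the extra factor $(1+\|u_0\|_{L^\alpha}^\alpha)^{\mu_1/\alpha}$. The final absorption step (observing $\mathcal E_0\ge\int_U u_0^\alpha\,dx$ and $\mathcal K(t)\ge 1$) is exactly what the paper does implicitly when it writes the last line as $C\mathcal E_0+C\mathcal K(t)+C(1+\int_U u_0^\alpha\,dx)$ and then concludes.
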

\begin{proof}
Using \eqref{rho:est2} to estimate the $L^{\alpha}$-norm of $u$ in \eqref{nablauest2}, we get 
\begin{multline*}
\int_0^t\int_U u^{1-\lambda}(u^{\lambda})_t^2 dxd\tau +\int_U |\nabla u|^{2-a} dx 
\le  C\mathcal E_0+C\mathcal{K}(t)\\+C\Big( 1+\int_U u_0^{\alpha}(x) dx\Big)^\frac{\alpha+\midx_1}\alpha
\int_0^t (1+\norm{\varphi^-(\tau)}_{L^\infty(\Gamma)} ^\frac{2-a}{(1-a)(1-\theta)}) d\tau .
\end{multline*}
By using \eqref{TesT} to bound the last integral, we obtain
\beqs
\int_0^t\int_U u^{1-\lambda}(u^{\lambda})_t^2 dxd\tau +\int_U |\nabla u|^{2-a} dx 
\le C\mathcal E_0 + C\mathcal{K}(t) + C\Big(1+\int_U u_0^{\alpha}(x) dx\Big).
\eeqs
Then  \eqref{estgradin2} follows.
\end{proof}

\myclearpage
\section{Interior $L^\infty$-estimates}\label{Linterior}

In this section, we  estimate the $L^\infty$-norm of the solution in the interior of the domain by using Moser's iteration. 

\begin{lemma}\label{interiorCac}

Assume $\alpha\ge 2-\delta$. Suppose $B_R$ and $B_\rho$, with $R>\rho>0$, are two concentric balls  in a compact subset of $U$.
If $T>T_2>T_1\ge 0$ then 
\beq\label{Sest}
\begin{aligned}
&\sup_{t\in[T_2,T]} \int_{B_\rho}u^{\alpha}(x,t)dx +\int_{T_2}^T\int_{B_\rho} |\nabla u|^{2-a}u^{\alpha+\delta-2}dx dt\le  \mathcal C_\alpha\mathcal{S},
\end{aligned}
\eeq
where
\beq\label{Sdef}
\mathcal{S}=\int_{T_1}^T \int_{B_R} u^{\alpha}dx dt+\Big(\int_{T_1}^T \int_{B_R} u^{\alpha}dx dt\Big)^\frac{\alpha+\delta-2}{\alpha},
\eeq
\beq\label{Calpha}
\mathcal C_\alpha=\mathcal C_\alpha(\rho,R,T_1,T_2,T)\eqdef c_9\alpha^2 (1+|B_R|T) \Big(1+ \frac{1}{T_2-T_1}+\frac{1}{(R-\rho)^{2-a}}\Big),
\eeq 
with $c_9\ge 1$  independent of $U$, $\alpha$, $\rho$, $R$, $T_1$, $T_2$, $T$.
\end{lemma}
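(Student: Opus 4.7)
The plan is to run a Caccioppoli-style energy estimate, adapted to our doubly nonlinear equation with the quasilinear coefficient $K(|\nabla u|)$. Choose a smooth cutoff $\xi=\xi(x,t)$ supported in $B_R\times(T_1,T]$, identically $1$ on $B_\rho\times[T_2,T]$, with $|\nabla\xi|\le c(R-\rho)^{-1}$ and $|\xi_t|\le c(T_2-T_1)^{-1}$. Since the balls lie in a compact subset of $U$, $\xi$ vanishes near $\Gamma$, so the Robin condition plays no role.

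I would multiply the PDE in \eqref{rho:eq} by the test function $\xi^{2-a}u^{\alpha+\delta-1}$ and integrate over $B_R\times(T_1,t)$ for $t\in[T_2,T]$. Using $\delta=1-\lambda$, the time-derivative term yields
$$\int_{B_R}(u^\lambda)_t\,\xi^{2-a}u^{\alpha+\delta-1}\,dx=\frac{\lambda}{\alpha}\frac{d}{dt}\int_{B_R}u^\alpha\xi^{2-a}\,dx-\frac{\lambda(2-a)}{\alpha}\int_{B_R}u^\alpha\xi^{1-a}\xi_t\,dx.$$
Integration by parts of the spatial divergence (boundary contribution vanishing) produces the good term $-(\alpha+\delta-1)\int K(|\nabla u|)|\nabla u|^2\xi^{2-a}u^{\alpha+\delta-2}\,dx$ plus the cross term $-(2-a)\int K(|\nabla u|)\nabla u\cdot\nabla\xi\,\xi^{1-a}u^{\alpha+\delta-1}\,dx$. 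I invoke the lower bound $K(\xi)\xi^{2}\ge d_{3}(\xi^{2-a}-1)$ from \eqref{i:ineq2} on the good term, and the upper bound $K(\xi)\xi\le d_{2}\xi^{1-a}$ on the cross term. For the latter, Young's inequality with exponents $p=(2-a)/(1-a)$ and $q=2-a$ gives
$$\bigl|K(|\nabla u|)\nabla u\cdot\nabla\xi\,\xi^{1-a}u^{\alpha+\delta-1}\bigr|\le\varepsilon\,\xi^{2-a}u^{\alpha+\delta-2}|\nabla u|^{2-a}+C\varepsilon^{-(1-a)}|\nabla\xi|^{2-a}u^{\alpha+\delta-a},$$
and I pick $\varepsilon$ of order $\alpha$ so that the gradient piece is absorbed into the good term of size $\alpha^{2}$.

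After multiplying through by $\alpha/\lambda$, integrating in $t$ from $T_1$ up to any $s\in[T_2,T]$ and then taking the supremum over $s$ on the left, I obtain
$$\sup_{t\in[T_2,T]}\int_{B_\rho}u^\alpha\,dx+\int_{T_2}^T\!\!\int_{B_\rho}u^{\alpha+\delta-2}|\nabla u|^{2-a}\,dx\,dt\le C\alpha^2\!\!\int_{T_1}^T\!\!\int_{B_R}\Bigl[\tfrac{u^\alpha}{T_2-T_1}+\tfrac{u^{\alpha+\delta-a}}{(R-\rho)^{2-a}}+u^{\alpha+\delta-2}\Bigr]dx\,dt.$$
To reduce to the form $\mathcal C_\alpha\mathcal S$, I use the super-critical hypothesis $a>\delta$ from \eqref{ad} together with \eqref{ee4} to write $u^{\alpha+\delta-a}\le u^{\alpha+\delta-2}+u^\alpha$, and then Hölder's inequality yields $\int u^{\alpha+\delta-2}\le(|B_R|T)^{(2-\delta)/\alpha}\bigl(\int u^\alpha\bigr)^{(\alpha+\delta-2)/\alpha}$. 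Collecting all prefactors into $\mathcal C_\alpha$ of the form \eqref{Calpha} finishes the proof.

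The main obstacle will be tracking the explicit $\alpha$-dependence through the Young's inequality step: the choice $\varepsilon\sim\alpha$ forces the constant in the $|\nabla\xi|^{2-a}u^{\alpha+\delta-a}$ piece to behave like $\alpha\cdot\alpha^{-(1-a)}=\alpha^{a}$, which together with the $\alpha^{2}$ coming from $(\alpha+\delta-1)\cdot\alpha/\lambda$ on the good term gives the stated $\alpha^{2}$-growth in $\mathcal C_\alpha$. A secondary point is to make sure the test function is admissible in the weak formulation; this is standard via a mollification/approximation in the time variable combined with a Steklov-average argument, and I would only sketch it.
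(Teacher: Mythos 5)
Your proposal is correct and follows essentially the same route as the paper: multiply by a power of $u$ times a space-time cutoff, use the lower bound $K(\xi)\xi^2\ge d_3(\xi^{2-a}-1)$ on the good term and the upper bound on $K$ for the cross term, absorb the cross term by Young's inequality with exponents $\tfrac{2-a}{1-a}$ and $2-a$ and a choice $\varepsilon\sim\alpha$, then use $u^{\alpha+\delta-a}\le u^{\alpha+\delta-2}+u^\alpha$ and H\"older to land on $\mathcal S$. The only (purely cosmetic) deviation is your use of $\xi^{2-a}$ in place of the paper's $\xi^{2}$, which has the small advantage of leaving no residual $\xi^{a}$ factor after Young's inequality but changes nothing in the final constant or the $\alpha^{2}$-dependence.
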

\begin{proof}
Let  $\xi(x,t)=\xi_1(|x|)\xi_2(t)$ be the the cut-off function which is $1$ on $B_\rho\times(T_2,T)$ and has compact support in $B_R\times (T_1,T)$.
More specifically,  $\xi_1(|x|),\xi_2(t)\in[0,1]$ and satisfy 
\begin{align*}
\xi_1(|x|)=
\begin{cases}
1 \quad \text{if} \quad |x|<\rho,\\ 0 \quad \text{if} \quad |x|>R,
\end{cases}
\text{ and } \quad 
\xi_2(t)=
\begin{cases}
0 \quad \text{if} \quad 0\le t\le T_1,\\ 
1 \quad \text{if} \quad T_2<t<T.
\end{cases}
\end{align*}
Also, there is $C>0$ such that
\beq\label{cutder}
 |\xi_t|\le \frac C{T_2-T_1}\quad\text{and}\quad  |\nabla \xi|\le \frac C{R-\rho}.
\eeq 
In the calculations within this proof, notation $C$ denotes a generic constant independent of $\alpha$, $\rho$, $R$, $T_1$, $T_2$, $T$. 

Recall that $\delta=1-\lambda$.
Multiplying the PDE in \eqref{rho:eq} by $u^{\alpha+\delta-1}\xi^2$, integrating  over $U$, and using integration by parts, we obtain  
\begin{align*}
\lambda\int_U u^{\alpha-1}\xi^2\frac{\partial u}{\partial t}dx
&=\int_U \nabla \cdot (K (|\nabla u|)\nabla u  )u^{\alpha+\delta-1}\xi^2dx\\
&=-(\alpha-\lambda)\int_U K (|\nabla u|)|\nabla u|^2u^{\alpha+\delta-2}\xi^2dx-2\int_U K (|\nabla u|)\nabla u\cdot \nabla\xi u^{\alpha+\delta-1}\xi dx.
\end{align*}
Using properties  \eqref{i:ineq2}, resp., \eqref{i:ineq1} of function $K(\cdot)$ in the first, resp., second integral on the right-hand side of the last identity, we find
\beq\label{firstst}
\begin{aligned}
\lambda\int_U u^{\alpha-1}\xi^2\frac{\partial u}{\partial t}dx
&\le -d_3(\alpha-\lambda)\int_U |\nabla u|^{2-a}u^{\alpha+\delta-2}\xi^2dx+d_3(\alpha-\lambda)\int_Uu^{\alpha+\delta-2}\xi^2dx\\
&\quad+2d_2\int_U|\nabla u|^{1-a}|u|^{\alpha+\delta-1}\xi |\nabla\xi|dx.
\end{aligned}
\eeq
%

Let $\varepsilon>0$. Applying Young's inequality to the last integral of \eqref{firstst}, for conjugate exponents $\frac{2-a}{1-a}$ and $2-a$, we have 
\begin{align*}
\lambda\int_U \frac{\partial u^\alpha}{\partial t}\xi^2 dx&\le -d_3(\alpha-\lambda)\int_U |\nabla u|^{2-a}u^{\alpha+\delta-2}\xi^2dx+d_3(\alpha-\lambda)\int_Uu^{\alpha+\delta-2}\xi^2dx\\
&\quad+\varepsilon \int_U|\nabla u|^{2-a}|u|^{\alpha+\delta-2}\xi^2dx+C\varepsilon^{a-1}\int_U |u|^{\alpha+\delta-a}\xi^{a} |\nabla\xi|^{2-a}dx.
\end{align*}
Choosing $\varepsilon=\frac{d_3(\alpha-\lambda)}{2}$, we then have
\begin{multline*}
\frac{\lambda}{\alpha}\frac {d}{dt}\int_U u^\alpha\xi^2dx-\frac{2\lambda}{\alpha}\int_Uu^{\alpha}\xi \xi_tdx
\le -\frac {d_3(\alpha-\lambda)}{2}\int_U |\nabla u|^{2-a}u^{\alpha+\delta-2}\xi^2dx\\
\quad +C(\alpha-\lambda)\int_Uu^{\alpha+\delta-2}\xi^2dx
 +\frac{C}{(\alpha-\lambda)^{(1-a)}}\int_U |u|^{\alpha+\delta-a}\xi^{a} |\nabla\xi|^{2-a}dx.
\end{multline*}
Then integrating the inequality in time from $0$ to $t$ gives
\begin{multline}\label{xiwith}
\frac{\lambda}{\alpha} \int_U u^{\alpha}(x,t)\xi^2(x,t) dx
+\frac {d_3(\alpha-\lambda)}{2}\int_0^t\int_U |\nabla u|^{2-a}u^{\alpha+\delta-2}\xi^2dxd\tau\\
\le\frac{2\lambda}{\alpha}\int_0^t\int_U u^{\alpha}\xi |\xi_t|dxd\tau +C(\alpha-\lambda)\int_0^t\int_U u^{\alpha+\delta-2}\xi^2dxd\tau\\
+\frac{C}{(\alpha-\lambda)^{(1-a)}}\int_0^t\int_U |u|^{\alpha+\delta-a}\xi^{a} |\nabla\xi|^{2-a}dxd\tau.
\end{multline}
Using \eqref{cutder} to estimate $\xi_t$ and $\nabla \xi$ on the right-hand side of \eqref{xiwith}, the bound and support of $\xi(x,t)$,  we have 
\begin{multline*}
J\eqdef\frac{\lambda}{\alpha} \sup_{t\in[T_2,T]} \int_{B_\rho} u^{\alpha}(x,t)dx 
+\frac {d_3(\alpha-\lambda)}{2}\int_{T_2}^T\int_{B_\rho} |\nabla u|^{2-a}u^{\alpha+\delta-2}dxd\tau\\
\le\frac{\lambda C}{\alpha(T_2-T_1)}\int_{T_1}^T\int_{B_R} u^{\alpha}dxd\tau
  +C(\alpha-\lambda)\int_{T_1}^T\int_{B_R} u^{\alpha+\delta-2}dxd\tau\\
 +\frac{C}{(\alpha-\lambda)^{1-a}(R-\rho)^{2-a}}\int_{T_1}^T\int_{B_R} u^{\alpha+\delta-a}dxd\tau.
\end{multline*}
 Note that $\lambda\le 1$ and $\alpha-\lambda\ge 1$. We then have
\begin{align*}
J\le\frac{C}{T_2-T_1}\int_{T_1}^T\int_{B_R} u^{\alpha}dxd\tau +C(\alpha-\lambda)\int_{T_1}^T\int_{B_R} u^{\alpha+\delta-2}dxd\tau+\frac{C}{(R-\rho)^{2-a}}\int_{T_1}^T\int_{B_R}|u|^{\alpha+\delta-a}dxd\tau.
\end{align*}
Inequality \eqref{ee4} gives $u^{\alpha+\delta-a} \le u^{\alpha+\delta-2}+u^\alpha$, hence 
\begin{align*}
J&\le C(\alpha-\lambda)\Big[1+ \frac{1}{T_2-T_1}+\frac{1}{(R-\rho)^{2-a}}\Big]\cdot J_1,
\end{align*}
where $J_1=\int_{T_1}^T\int_{B_R} u^{\alpha}dxd\tau + \int_{T_1}^T\int_{B_R} u^{\alpha+\delta-2}dxd\tau$.
Therefore,
\beq\label{supual1}
\sup_{t\in[T_2,T]} \int_{B_\rho} u^{\alpha}(x,t)dx 
\le C\alpha^2 \Big[1+ \frac{1}{T_2-T_1}+\frac{1}{(R-\rho)^{2-a}}\Big]\cdot J_1,
\eeq
\beq\label{supual2}
\int_{T_2}^T\int_{B_\rho} |\nabla u|^{2-a}u^{\alpha+\delta-2}dxd\tau
\le C\Big[1+ \frac{1}{T_2-T_1}+\frac{1}{(R-\rho)^{2-a}}\Big]\cdot J_1.
\eeq
Applying  H\"older's inequality to the second integral of $J_1$ using exponents $\frac{\alpha}{\alpha+\delta-2}$ and $\frac{\alpha}{2-\delta}$ we have 
\beq \label{J1}
J_1 \le (1+(|B_R|T)^\frac{2-\delta}{\alpha})\mathcal{S} \le 2(1+|B_R|T)\mathcal S.
\eeq
Hence estimate  \eqref{Sest} follows from \eqref{supual1}, \eqref{supual2} and \eqref{J1}.
\end{proof}

\begin{proposition}\label{preMoser}
Assume \eqref{alpcond}. Let $B_R$, $B_\rho$, and $T$, $T_2$, $T_1$ and ${\mathcal C}_\alpha$ be as in Lemma \ref{interiorCac}. Then 
\beq\label{bfinest}
\| u\|_{L^{\kappa\alpha}(B_\rho\times (T_2,T))}
\le A_\alpha^\frac 1{\alpha}\Big( \| u\|_{L^\alpha(B_R\times(T_1,T))}^r+\| u\|_{L^\alpha(B_R\times(T_1,T))}^s\Big)^\frac 1{\alpha},
\eeq 
where $\kappa$ is defined in \eqref{kappadef},
\beq\label{rs}
r=r(\alpha)\eqdef \alpha+\delta-2,\quad s= s(\alpha) \eqdef \frac{\alpha^2}{\alpha+\delta-a},
\eeq
\beq\label{Aal}
A_{\alpha}=c_{10}(1+\rho^{-1})^{2-a}\alpha^{6-a} (1+|B_R|T)^2 \Big(1+ \frac{1}{T_2-T_1}+\frac{1}{(R-\rho)^{2-a}}\Big)^2,
\eeq
with $c_{10}\ge 1$ independent of $U$, $\alpha$, $\rho$, $R$, $T_1$, $T_2$, $T$.
\end{proposition}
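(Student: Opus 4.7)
The plan is to feed the bounds of Lemma \ref{interiorCac} into the ball-case parabolic Sobolev inequality \eqref{paraball} of Lemma \ref{Parabwk}, and then reconcile the exponents so produced with those appearing in \eqref{bfinest} using the elementary inequalities \eqref{ee3} and \eqref{ee4}.

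First, I would apply \eqref{paraball} on the cylinder $B_\rho\times(T_2,T)$ and raise both sides to the $\kappa\alpha$-th power. Using the identities $\tilde\theta\kappa\alpha=\alpha+\delta-a$ and $(1-\tilde\theta)\kappa=(2-a)/n$, which are direct algebraic consequences of \eqref{kappadef}, this gives
\begin{equation*}
\|u\|_{L^{\kappa\alpha}(B_\rho\times(T_2,T))}^{\kappa\alpha}\le c_6(1+\rho^{-1})^{2-a}\alpha^{2-a}\,I\,M^{(2-a)/n},
\end{equation*}
where $I=\int_{T_2}^T\!\int_{B_\rho}u^{\alpha+\delta-a}\,dxdt+\int_{T_2}^T\!\int_{B_\rho}u^{\alpha+\delta-2}|\nabla u|^{2-a}\,dxdt$ and $M=\sup_{[T_2,T]}\int_{B_\rho}u^\alpha\,dx$. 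Lemma \ref{interiorCac} bounds $M$ and the weighted-gradient piece of $I$ by $\mathcal C_\alpha\mathcal S$. For the remaining term, \eqref{ee4} yields $u^{\alpha+\delta-a}\le u^\alpha+u^{\alpha+\delta-2}$, and H\"older in space-time then gives $\int u^{\alpha+\delta-a}\le(1+|B_R|T)\mathcal S\le\mathcal C_\alpha\mathcal S$ (using $\mathcal C_\alpha\ge c_9\alpha^2(1+|B_R|T)$). Since $\mathcal C_\alpha\ge 1$ and $1+(2-a)/n\le 2$, the bound becomes
\begin{equation*}
\|u\|_{L^{\kappa\alpha}(B_\rho\times(T_2,T))}^{\kappa\alpha}\le C(1+\rho^{-1})^{2-a}\alpha^{2-a}\mathcal C_\alpha^{2}\mathcal S^{1+(2-a)/n}.
\end{equation*}

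Set $N=\|u\|_{L^\alpha(B_R\times(T_1,T))}^\alpha$ so that $\mathcal S=N+N^{r/\alpha}$ by \eqref{Sdef}, and put $p=(1+(2-a)/n)/\kappa$. I would raise to the $1/\kappa$ power (so the left side becomes $\|u\|_{L^{\kappa\alpha}}^\alpha$), absorb $(C\mathcal C_\alpha^2)^{1/\kappa}\le C\mathcal C_\alpha^2$, and expand $\mathcal S^p$ via \eqref{ee3} (note $p\ge 1$) to arrive at an intermediate estimate $\|u\|_{L^{\kappa\alpha}(B_\rho\times(T_2,T))}^\alpha\le C'\mathcal C_\alpha^2\bigl(N^p+N^{pr/\alpha}\bigr)$. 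The main obstacle—and the one place where the super-critical hypothesis \eqref{ad} is genuinely used—is the reduction of $N^p+N^{pr/\alpha}$ to $2(N^{r/\alpha}+N^{s/\alpha})$ via \eqref{ee4}, for which it suffices to verify the chain $r/\alpha\le pr/\alpha\le p\le s/\alpha$. The lower bounds follow from $\kappa\le 1+(2-a)/n$ (equivalently $a\ge\delta$) together with $r/\alpha\le 1\le p$ and $\alpha\ge 2-\delta$. The delicate upper bound $p\le s/\alpha$ rearranges to $(1+(2-a)/n)(\alpha+\delta-a)\le\kappa\alpha$; using $\kappa\alpha=(\alpha+\delta-a)+\alpha(2-a)/n$ from \eqref{kappadef}, the difference of the two sides computes to $(2-a)(\delta-a)/n$, which is nonpositive exactly because $a\ge\delta$. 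Finally, \eqref{Aal} emerges by tracking constants: the $\alpha^{2-a}$ from \eqref{paraball} and the $\alpha^4$ hidden inside $\mathcal C_\alpha^2$ combine to the $\alpha^{6-a}$ factor, while the $(1+|B_R|T)^2$ and $(1+1/(T_2-T_1)+1/(R-\rho)^{2-a})^2$ factors arise directly from $\mathcal C_\alpha^2$ via \eqref{Calpha}.
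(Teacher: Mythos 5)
Your proof is correct, and it relies on the same two ingredients as the paper's — the ball-case parabolic Sobolev inequality \eqref{paraball} and the Caccioppoli-type estimate \eqref{Sest} — but organizes the interpolation step differently. The paper keeps the exponents $\tilde\theta$, $1-\tilde\theta$ inside the brace $\{\cdots\}^{1/\alpha}$ and sidesteps computing them via the elementary observation $X^{\tilde\theta}Y^{1-\tilde\theta}\le (X+Y)^{\tilde\theta}(X+Y)^{1-\tilde\theta}=X+Y$; after substituting $\mathcal S=y_\alpha^\alpha+y_\alpha^{\alpha+\delta-2}$ this leaves four monomials in $y_\alpha$, which are collapsed via \eqref{ee4} using the chain
\[
\alpha+\delta-2 \;<\; \frac{(\alpha+\delta-2)\alpha}{\alpha+\delta-a} \;<\; \alpha \;<\; \frac{\alpha^2}{\alpha+\delta-a}.
\]
You instead raise the Sobolev inequality to the $\kappa\alpha$-th power so that the identities $\tilde\theta\kappa\alpha=\alpha+\delta-a$ and $(1-\tilde\theta)\kappa=(2-a)/n$ eliminate $\tilde\theta$ entirely, giving the clean power $\mathcal S^{1+(2-a)/n}$; after taking the $1/\kappa$ root you then only need to collapse the two monomials $N^p$, $N^{pr/\alpha}$, at the cost of the extra algebraic check $p\le s/\alpha$ (which you correctly reduce to $a\ge\delta$). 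Both routes invoke the super-critical hypothesis in the same place and produce the same constant \eqref{Aal}, with the $\alpha^{6-a}$ coming from $\alpha^{2-a}$ in \eqref{paraball} times $\alpha^4$ from $\mathcal C_\alpha^2$. Your version is a bit leaner in the number of monomials to track; the paper's avoids computing explicit interpolation exponents. Both are sound.
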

\begin{proof}
Applying Sobolev inequality \eqref{paraball} to the ball $B_\rho$ in place of $B_R$, we have
\beq\label{beginest}
J\eqdef \Big(\int_{T_2}^T \int_{B_\rho} |u|^{\kappa\alpha}dx dt \Big)^{\frac 1{\kappa\alpha}}
\le \hat c^\frac1{\kappa\alpha} \Big\{I^{\tilde \theta} \cdot\sup_{t\in[T_2,T]}\Big( \int_{B_\rho}|u|^{\alpha}dx \Big)^{1-\tilde \theta}\Big\}^\frac 1{\alpha},
\eeq
where $\hat c=c_6(1+\rho^{-1})^{2-a}\alpha^{2-a}$, exponent $\tilde \theta$ is defined in \eqref{kappadef}, and
\beqs
I=\Big( \int_{T_2}^T\int_{B_\rho} |u|^{\alpha+\delta-a}dxdt+\int_{T_2}^T\int_{B_\rho} |u|^{\alpha+\delta-2}|\nabla u|^{2-a}dxdt \Big)^{\frac {\alpha}{\alpha+\delta-a}}. 
\eeqs
Using \eqref{ee1}, we have
\beqs
I
\le C_4 \Big(\int_{T_2}^T\int_{B_\rho} |u|^{\alpha+\delta-a}dxdt\Big)^{\frac {\alpha}{\alpha+\delta-a}}+C_4\Big(\int_{T_2}^T\int_{B_\rho} |u|^{\alpha+\delta-2}|\nabla u|^{2-a}dxdt \Big)^{\frac {\alpha}{\alpha+\delta-a}},
\eeqs
 where $ C_4=2^{\frac{\alpha}{\alpha+\delta-a}}$.
Then applying H\"older's inequality to the first integral on the right-hand side yields
\beq\label{Iest}
I
\le C_4 (T|B_\rho|)^\frac{a-\delta}{\alpha+\delta-a}\int_{T_2}^T\int_{B_\rho} |u|^{\alpha}dxdt+C_4\Big(\int_{T_2}^T\int_{B_\rho} |u|^{\alpha+\delta-2}|\nabla u|^{2-a}dxdt \Big)^{\frac {\alpha}{\alpha+\delta-a}}.
\eeq
Estimating the second integral on the right-hand side of \eqref{Iest} by \eqref{Sest}, and combining with \eqref{beginest} give
\begin{align*}
J
&\le \hat c^\frac1{\kappa\alpha} \Big\{  \Big( C_4 (T|B_\rho|)^\frac{a-\delta}{\alpha+\delta-a}\mathcal{S}+C_4({\mathcal C}_\alpha\mathcal{S})^{\frac{\alpha}{\alpha+\delta-a}}\Big)^{\tilde \theta}({\mathcal C}_\alpha\mathcal{S} )^{1-\tilde \theta} \Big\}^{\frac 1{\alpha}}\\
&\le \hat c^\frac1{\kappa\alpha} \Big\{  \Big( C_4 (T|B_\rho|)^\frac{a-\delta}{\alpha+\delta-a}\mathcal{S}+C_4({\mathcal C}_\alpha\mathcal{S})^{\frac{\alpha}{\alpha+\delta-a}}+{\mathcal C}_\alpha\mathcal{S} \Big)^{\tilde \theta+1-\tilde \theta} \Big\}^{\frac 1{\alpha}}\\
&\le \hat c^\frac1{\kappa\alpha} \Big\{ \big(  C_4 (T|B_\rho|)^\frac{a-\delta}{\alpha+\delta-a}+{\mathcal C}_\alpha\big )\mathcal{S}+C_4{\mathcal C}_\alpha^{\frac{\alpha}{\alpha+\delta-a}}\mathcal{S}^{\frac{\alpha}{\alpha+\delta-a}} \Big\}^{\frac 1{\alpha}}.
\end{align*}
Note by definition \eqref{Sdef} that $\mathcal S = y_{\alpha}^{\alpha}+y_{\alpha}^{\alpha+\delta-2}$, 
where
$y_{\alpha}=\Big(\int_{T_1}^T \int_{B_R} |u|^{\alpha}dx dt \Big)^{\frac 1{\alpha}}.$
Thus, we find that
\begin{align*}
J
&\le \hat c^\frac1{\kappa\alpha} \Big\{ (  C_4 (T|B_\rho|)^\frac{a-\delta}{\alpha+\delta-a}+{\mathcal C}_\alpha)(y_{\alpha}^{\alpha}+y_{\alpha}^{\alpha+\delta-2})+C_4{\mathcal C}_\alpha^{\frac{\alpha}{\alpha+\delta-a}}(y_{\alpha}^{\alpha}+y_{\alpha}^{\alpha+\delta-2})^{\frac{\alpha}{\alpha+\delta-a}} \Big\}^{\frac 1{\alpha}}\\
&\le \hat c^\frac1{\kappa\alpha}\Big\{ (M_1y_{\alpha}^{\alpha}+M_1y_{\alpha}^{\alpha+\delta-2}+M_2y_{\alpha}^\frac{\alpha^2}{\alpha+\delta-a}+M_2y_{\alpha}^\frac{(\alpha+\delta-2)\alpha}{\alpha+\delta-a}\Big\}^{\frac 1{\alpha}}
\end{align*}
where 
\beq\label{A12} 
M_1=C_4 (T|B_\rho|)^\frac{a-\delta}{\alpha+\delta-a}+C_{\alpha},\quad 
M_2=C_4{\mathcal C}_\alpha^{\frac{\alpha}{\alpha+\delta-a}}2^{\frac{\alpha}{\alpha+\delta-a}}=C_4^2C_{\alpha}^{\frac{\alpha}{\alpha+\delta-a}}.\eeq

Note that $\alpha + \delta -2 <  \frac{(\alpha+\delta-2)\alpha}{\alpha+\delta-a}<\alpha <\frac{\alpha^2}{\alpha+\delta-a} $, then by \eqref{ee4} we have 
\beqs
y_{\alpha}^{\alpha},\ y_{\alpha}^\frac{(\alpha+\delta-2)\alpha}{\alpha+\delta-a}\le y_{\alpha}^{\alpha+\delta-2}+y_{\alpha}^\frac{\alpha^2}{\alpha+\delta-a}.
\eeqs
Therefore, we obtain 
\beq\label{Jest}
J\le [3\hat c^\frac1\kappa(M_1+M_2)]^{\frac 1{\alpha}} (  y_{\alpha}^r+y_{\alpha}^s)^{\frac 1{\alpha}}.
\eeq 
Because $\frac{a-\delta}{\alpha+\delta-a}\le 1$, $\frac{\alpha}{\alpha+\delta-a}\le 2$, ${\mathcal C}_\alpha\ge 1+|B_R|T>1,$ and $1<C_4<4$, we have 
\beq\label{simp}
M_1+M_2\le C_4 (1+|B_R|T)+{\mathcal C}_\alpha +C_4^2{\mathcal C}_\alpha^2\le C_4 {\mathcal C}_\alpha +{\mathcal C}_\alpha +C_4^2{\mathcal C}_\alpha^2 \le 3 C_4^2{\mathcal C}_\alpha^2\le 3(4{\mathcal C}_\alpha)^2 .
\eeq
Also, $\hat c^\frac1\kappa\le \hat c$.
 Combining \eqref{Jest} and \eqref{simp}, we obtain
 \beqs
J\le [12^2\hat c\,  {\mathcal C}_\alpha^2]^{\frac 1{\alpha}} (  y_{\alpha}^r+y_{\alpha}^s)^{\frac 1{\alpha}}.
\eeqs 
Then  \eqref{bfinest} follows.
\end{proof}


Iterating relation \eqref{bfinest}, we obtain the following local estimate for $u$. 
\begin{theorem} \label{Linf1} 
Assume $\alpha_0>0$ such that $\alpha=\alpha_0$ satisfies \eqref{alpcond}.
Let $B_R$, with $R>0$,  be a ball in a compact subset of $U$, and $T>0$, $\sigma \in (0,1)$. Then
\beq\label{supualR}
\|u\|_{L^{\infty}(B_{R/2}\times(\sigma T,T))}\le \widehat{\mathcal C}_{R,T,\sigma}\max\Big\{ \|u\|^{\mu}_{L^{\alpha_0}(B_R\times(0,T))},\|u\|^{\nu}_{L^{\alpha_0}(B_R\times(0,T))}\Big\}, 
\eeq
where 
\beq\label{munu}
\mu=\prod_{j=0}^\infty \frac{\alpha_0 \kappa_*^j-2+\delta}{\alpha_0 \kappa_*^j},\quad \nu=\prod_{j=0}^\infty \frac{\alpha_0\kappa_*^j}{\alpha_0\kappa_*^j+\delta-a},
\eeq
 \beq\label{Cdef}
\widehat{\mathcal C}_{R,T,\sigma }=\Big[2^{11} c_{10} \alpha_0^{6-a} (1+R^{-1})^2 (1+|B_R|T)^3\Big(1+ \frac{1}{\sigma T} +\frac {1}{R^{2-a}}\Big)^2\Big]^\omega,
\eeq
with  $\kappa_*=\kappa(\alpha_0)$ defined in \eqref{kappadef}, 
constant $c_{10}$ as in Proposition \ref{preMoser}, and some positive number $\omega$ depending on $\alpha_0$.
\end{theorem}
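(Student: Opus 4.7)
The plan is to iterate the estimate of Proposition \ref{preMoser} along a sequence of shrinking cylinders and then handle the resulting two-term non-homogeneous recurrence via Lemma \ref{Genn}. Set $\alpha_j = \alpha_0 \kappa_*^j$ for $j \ge 0$, where $\kappa_* = \kappa(\alpha_0) > 1$ by virtue of the hypothesis $\alpha_0 > \alpha_*$ contained in \eqref{alpcond}. Each $\alpha_j \ge \alpha_0$ still satisfies \eqref{alpcond}, and since the formula for $\kappa$ in \eqref{kappadef} shows that $\kappa(\alpha)$ is increasing in $\alpha$, one has $\kappa(\alpha_j) \ge \kappa_*$. Choose nested cylinders through $\rho_j = R/2 + R/2^{j+1}$ and $T_j = \sigma T(1 - 2^{-j})$, so that $\rho_j \downarrow R/2$, $T_j \uparrow \sigma T$, $\rho_j - \rho_{j+1} = R/2^{j+2}$, and $T_{j+1} - T_j = \sigma T/2^{j+1}$. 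Set $Q_j = B_{\rho_j} \times (T_j, T)$ and $y_j = \|u\|_{L^{\alpha_j}(Q_j)}$.

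Applying Proposition \ref{preMoser} at step $j$, with the above choices of $\rho$, $R$, $T_1$, $T_2$ and $\alpha = \alpha_j$, gives
\bals
\|u\|_{L^{\kappa(\alpha_j)\alpha_j}(Q_{j+1})} \le A_{\alpha_j}^{1/\alpha_j}\bigl(y_j^{\alpha_j + \delta - 2} + y_j^{\alpha_j^2/(\alpha_j + \delta - a)}\bigr)^{1/\alpha_j}.
\eals
Since $\kappa(\alpha_j)\alpha_j \ge \alpha_{j+1}$ and $|Q_{j+1}| \le |B_R| T$, H\"older's inequality on the finite-measure set $Q_{j+1}$ yields a factor $C_0 > 0$, bounded uniformly in $j$ in terms of $|B_R|$, $T$ and $\alpha_0$, for which $y_{j+1} \le C_0 \|u\|_{L^{\kappa(\alpha_j)\alpha_j}(Q_{j+1})}$. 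Together with the elementary inequality $(a+b)^{1/\alpha_j} \le a^{1/\alpha_j} + b^{1/\alpha_j}$ (valid since $\alpha_j \ge 1$), this yields the recursion
\bals
y_{j+1} \le B_j \bigl(y_j^{p_j} + y_j^{q_j}\bigr),\quad p_j \eqdef 1 + \frac{\delta-2}{\alpha_j} < 1,\quad q_j \eqdef \frac{\alpha_j}{\alpha_j + \delta - a} > 1,
\eals
with prefactor $B_j$ of order $A_{\alpha_j}^{1/\alpha_j}$. The explicit form of $A_{\alpha_j}$ in \eqref{Aal}, combined with $\alpha_j \sim \kappa_*^j$, $(R - \rho_{j+1})^{-(2-a)} \sim 4^j R^{-(2-a)}$, and $(T_{j+1} - T_j)^{-1} \sim 2^j/(\sigma T)$, shows that $\sum_{j \ge 0} \alpha_j^{-1} \log B_j$ converges.

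The main obstacle is precisely the presence of the two distinct exponents $p_j < 1 < q_j$ in this recursion, which blocks a direct application of single-exponent Moser iteration; this is exactly what Lemma \ref{Genn} in the Appendix is designed to handle. Granted that lemma, iterating from $j = 0$ yields a bound of the form $\sup_{k \ge 1} y_k \le \widehat{\mathcal C}_{R,T,\sigma} \max(y_0^\mu, y_0^\nu)$ with $\mu = \prod_{j \ge 0} p_j$ and $\nu = \prod_{j \ge 0} q_j$; both products are finite because $|\log p_j|, |\log q_j| = O(1/\alpha_j)$ and $\sum_{j \ge 0} \alpha_j^{-1} = \alpha_0^{-1}\sum_{j \ge 0} \kappa_*^{-j} < \infty$. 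The accumulated factors coming from the $B_j$'s collapse into a single prefactor of the form \eqref{Cdef}, with $\omega > 0$ depending on $\alpha_0$ (essentially a bounded multiple of $\sum_{j \ge 0}\alpha_j^{-1}$). Finally, since $Q_j \supset B_{R/2} \times (\sigma T, T)$ for all $j$, we have $y_j \to \|u\|_{L^\infty(B_{R/2} \times (\sigma T, T))}$ as $j \to \infty$, while $y_0 \le \|u\|_{L^{\alpha_0}(B_R \times (0, T))}$ since $Q_0 \subset B_R \times (0, T)$; this gives \eqref{supualR}.
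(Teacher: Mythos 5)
Your argument reproduces the paper's proof essentially verbatim: same geometric sequence $\alpha_j=\alpha_0\kappa_*^j$, same telescoping cylinders, same use of H\"older to realign the exponents after each application of Proposition~\ref{preMoser}, and the same reduction to the two-exponent iteration handled by Lemma~\ref{Genn}. The only slip is in the constant bookkeeping: to apply Lemma~\ref{Genn} to the recursion $y_{j+1}\le B_j(y_j^{p_j}+y_j^{q_j})$ one needs $\sum_j\log B_j<\infty$ (equivalently, writing $B_j=A^{\omega_j}$, $\sum_j\omega_j<\infty$), not $\sum_j\alpha_j^{-1}\log B_j<\infty$ as you wrote; relatedly, the H\"older factor cannot merely be ``bounded uniformly in $j$'' --- a fixed constant $C_0>1$ would make $\prod_jB_j$ diverge --- it must tend to $1$, which it does because the gap between $1/\alpha_{j+1}$ and $1/(\kappa(\alpha_j)\alpha_j)$ is $O(\alpha_j^{-1})$, giving a factor $(1+|Q_0|)^{O(\alpha_j^{-1})}$. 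With that correction the accounting closes exactly as in the paper.
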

\begin{proof}
For $j=0,1,2,\ldots $, let 
\beqs 
t_j=\sigma T(1-\frac 1{2^j}),\  
\rho_j=\frac {R}2(1+\frac 1{2^j}),\ 
Q_j=B_{\rho_j}\times(t_j,T),
\eeqs 
where $B_{\rho_j}$ is the ball of radius $\rho_j$ having the same center as $B_R$.
Note
\beqs 
t_j-t_{j+1}=\frac{\sigma T}{2^{j+1}}, \quad \rho_j-\rho_{j+1}=\frac R{2^{j+2}}, \quad
\lim_{j\to\infty}t_j=\sigma T,\quad
\lim_{t\to\infty} \rho_j=R/2.
\eeqs
Also, $\kappa_*>1$.  Let ${\alpha}_j={\alpha}_0\kappa_*^{j}$. Then $\alpha_j\ge \alpha_0$ gives $\kappa(\alpha_j)\ge \kappa(\alpha_0)=\kappa_*$.

Define $Y_j=\| u\|_{L^{\alpha_j}(Q_j)}.$ Note that $(Q_j)_{j=0}^\infty$ is a sequence of nested cylinders.
By H\"older's inequality we obtain  
\beqs
\| u\|_{L^{\kappa_*\alpha_j}(Q_{j+1})}\le \| u\|_{L^{\kappa(\alpha_j) \alpha_j}(Q_{j+1})}|Q_{j+1}|^{\frac1{\kappa_*\alpha_j}-\frac1{\kappa(\alpha_j) \alpha_j}}
\le (1+|Q_0|)^\frac1{\kappa_*\alpha_j} \| u\|_{L^{\kappa(\alpha_j) \alpha_j}(Q_{j+1})}.
\eeqs
Hence applying  \eqref{bfinest} to $\alpha=\alpha_j$, $\rho=\rho_{j+1}$, $R=\rho_j$, $T_2=t_{j+1}$ and $T_1=t_j$ gives
\begin{align*}
Y_{j+1} \le (1+|Q_0|)^\frac1{\kappa_*\alpha_j} A_{\alpha_j}^{\frac 1{\alpha_j}}\Big[Y_j^{r(\alpha_j)} +Y_j^{s(\alpha_j)}\Big]^{\frac 1{\alpha_j}}.
\end{align*}
Using definitions in \eqref{rs} and \eqref{Aal}, we denote
$$r_j=r(\alpha_j)=\alpha_j-2+\delta=\alpha_0 \kappa_*^j-2+\delta,$$
$$s_j=s(\alpha_j)=\alpha_j^2/(\alpha_j+\delta-a)=\alpha_0^2\kappa_*^{2j}/(\alpha_0\kappa_*^j+\delta-a),$$ 
\beq\label{hatAj} 
\widehat A_j=(1+|Q_0|)^\frac1{\kappa_*} A_{\alpha_j}.
\eeq
Then
\beq\label{Moserform1}
Y_{j+1} \le \widehat A_j^{\frac 1{\alpha_j}}\big(Y_j^{r_j} +Y_j^{s_j}\big)^{\frac 1{\alpha_j}}.
\eeq

We estimate $\widehat A_j$. We have from \eqref{Aal} that
\begin{align*}
\widehat A_j&\le c_{10}(1+|Q_0|) \cdot (1+2/R)^2\alpha_j^{6-a} (1+|Q_0|)^2 \Big\{1+ \frac{2^{j+1}}{\sigma T} +\Big(\frac {2^{j+2}}{R}\Big)^{2-a}\Big\}^2 \\
&\le 4c_{10}(1+R^{-1})^2(1+|Q_0|)^3(\alpha_0 \kappa_*^j)^{6-a} 16^{j+2} (1+ \frac{1}{\sigma T} +\frac {1}{R^{2-a}})^2
\le A_{R,T,\sigma}^{j+1},
\end{align*}
where
\beqs
A_{R,T,\sigma}=\max\Big\{16\kappa_*^{6-a}, 4^5 c_{10} \alpha_0^{6-a} (1+R^{-1})^2 (1+|Q_0|)^3\Big(1+ \frac{1}{\sigma T} +\frac {1}{R^{2-a}}\Big)^2   \Big\}.
\eeqs
Since $\kappa_*\in(1,2)$, we actually have
\beq\label{Abound}
A_{R,T,\sigma}=4^5 c_{10} \alpha_0^{6-a} (1+R^{-1})^2 (1+|Q_0|)^3\Big(1+ \frac{1}{\sigma T} +\frac {1}{R^{2-a}}\Big)^2\ge 1.
\eeq
Therefore,
\beq\label{Yjstep}
Y_{j+1} \le A_{R,T,\sigma}^\frac{j+ 1}{\alpha_j}\Big[Y_j^{r_j} +Y_j^{s_j}\Big]^{\frac 1{\alpha_j}}\quad \forall j\ge 0.
\eeq
Since $\kappa_*>1$, we clearly have  $\sum_{j=0}^\infty (j+1)/\alpha_j$ converges to a positive number.
Note also that
\beq\label{cv0}
0<  \sum_{j=0}^\infty \ln \frac{s_j}{\alpha_j} = \sum_{j=0}^\infty \ln(1+\frac{a-\delta}{\alpha_0\kappa_*^j+\delta-a})\le \sum_{j=0}^\infty \frac{a-\delta}{\alpha_0\kappa_*^j+\delta-a}<\infty,
\eeq
and 
\beq\label{converg}
0< -\sum_{j=0}^\infty  \ln \frac{r_j}{\alpha_j}
=\sum_{j=0}^\infty  \ln \frac{\alpha_j}{r_j}
=\sum_{j=0}^\infty \ln(1+\frac {2+\delta}{\alpha_0 \kappa_*^j-2-\delta})
\le \sum_{j=0}^\infty \frac {2+\delta}{\alpha_0 \kappa_*^j-2-\delta}<\infty.
\eeq
Therefore, $\Pi_{j=0}^\infty (r_j/\alpha_j)$ and $\Pi_{j=0}^\infty (s_j/\alpha_j)$  converge to positive numbers $\mu$ and $\nu$, resp., given by \eqref{munu}.

By \eqref{Yjstep}, and applying Lemma \ref{Genn} to sequence $(Y_j)_{j=0}^\infty$, we obtain
\beqs
\|u\|_{L^\infty(B_{R/2}\times (\sigma T,T))}=\lim_{j\to\infty}Y_j\le (2A_{R,T,\sigma})^\omega \max\{Y_0^\mu,Y_0^\nu\},
\eeqs
for some positive number $\omega$. 
Then the desired estimate \eqref{supualR} follows.
\end{proof}

\begin{remark}
Inequality \eqref{supualR} obviously leads to  the quasi-homogeneous estimate \eqref{Mishaineq}, which was proved in \cite{Surnachev2012} for equation
\beq\label{Mishaeq}
u_t=\nabla\cdot \mathbf A(x,t,u,\nabla u)
\eeq
with the homogeneous structure
\beq\label{Mishacond}
\mathbf A(x,t,u,\nabla u)\cdot \nabla u \ge c|u|^{m-1}|\nabla u|^p, \quad |\mathbf A(x,t,u,\nabla u)|\le c' |u|^{m-1}|\nabla u|^{p-1}.
\eeq

Due to the non-homogeneity of function $K(\cdot)$, see \eqref{i:ineq1}, our equation \eqref{maineq} cannot be converted to \eqref{Mishaeq}, \eqref{Mishacond}.
Therefore, above inequality \eqref{supualR} is an extension of \eqref{Mishaineq} to the class of equations \eqref{maineq} with non-homogeneous structure \eqref{i:ineq1}.
\end{remark}


Now, we bound the $L^\infty$-norm of $u$, in any compact subsets of $U$, in terms of the initial and boundary data.

\begin{theorem}\label{Linfcompact}
Let $U'$ be an open, relatively compact subset of $U$, and $\alpha=\alpha_0$ satisfy \eqref{alpcond}. Then for  $T >0$, and  $0<\varepsilon<\min\{1,T\}$, one has 
\beq \label{linfest5}
\|u\|_{L^{\infty}(U'\times(\varepsilon,T))}\le C \varep^{-2\omega}(1+T)^{3\omega}\max\Big\{ \|u\|_{L^{\alpha_0}(U\times(0,T))}^\mu, \|u\|_{L^{\alpha_0}(U\times(0,T))}^\nu \Big\}, 
\eeq
where $\omega,\mu,\nu$ are the same as in Theorem \ref{Linf1}.

In particular, if $T>0$ satisfies \eqref{T1} for $\alpha=\alpha_0$, then 
\beq \label{linfest4}
\|u\|_{L^{\infty}(U'\times(\varepsilon,T))}\le C \varep^{-2\omega}(1+T)^{3\omega}\max\Big\{ \Big(\int_0^{T}\mathcal{U}_{\alpha_0}(t) dt\Big)^{\frac{\mu}{\alpha_0}}, \Big(\int_0^{T}\mathcal{U}_{\varphi,\alpha_0}(t)dt\Big)^{\frac{\nu}{\alpha_0}} \Big\}, 
\eeq
where 
\beq\label{phidef}
\mathcal{U}_{\alpha_0}(t)= 
 \Big\{ \Big( 1+\int_U u_0^{\alpha_0}(x) dx\Big)^{-\frac{\midx_{1,\alpha_0}}{\alpha_0} }
 - C_{3,\alpha_0} \int_0^t (1+\norm{\varphi^-(\tau)}_{L^\infty(\Gamma)} )^{\frac  {2-a}{(1-a)(1-\theta_{\alpha_0})} } d\tau  \Big\}^{-\frac{\alpha_0}{\midx_{1,\alpha_0}} }.
\eeq

Furthermore, if $T>0$ satisfies \eqref{TesT} for $\alpha=\alpha_0$ then
\beq \label{linfest6}
\|u\|_{L^{\infty}(U'\times(\varepsilon,T))}\le C \varep^{-2\omega}(1+T)^{3\omega+\nu/\alpha_0}(1+\|u_0\|_{L^{\alpha_0}(U)})^\nu. 
\eeq

Above, constant $C>0$ depends on $U$, $U'$ and $\alpha_0$.
\end{theorem}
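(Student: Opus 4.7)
The plan is to upgrade the interior estimate of Theorem \ref{Linf1} from a single ball to the relatively compact set $U'$ by a covering argument, and then feed the $L^{\alpha_0}$ bounds from Theorem \ref{est-sol} into \eqref{linfest5} to derive the two data-dependent consequences.

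For \eqref{linfest5}, since $\overline{U'}$ is compact in $U$, I would pick $R_0>0$ so small that $B_{R_0}(x)$ has compact closure in $U$ for every $x\in\overline{U'}$, and then select finitely many points $x_1,\ldots,x_M\in\overline{U'}$ with $\overline{U'}\subset\bigcup_{i=1}^M B_{R_0/2}(x_i)$. Applying Theorem \ref{Linf1} to each ball $B_{R_0}(x_i)$ with the choice $\sigma=\varepsilon/T\in(0,1)$ (so that $\sigma T=\varepsilon$), one obtains
\[
\|u\|_{L^\infty(B_{R_0/2}(x_i)\times(\varepsilon,T))}\le\widehat{\mathcal{C}}_{R_0,T,\varepsilon/T}\max\bigl\{\|u\|_{L^{\alpha_0}(B_{R_0}(x_i)\times(0,T))}^\mu,\,\|u\|_{L^{\alpha_0}(B_{R_0}(x_i)\times(0,T))}^\nu\bigr\}.
\]
Reading off the constant in \eqref{Cdef} with $R=R_0$ fixed, $|B_{R_0}|$ absorbed into a constant depending on $(U,U',\alpha_0)$, and using $\varepsilon\le 1$, the factor $(1+1/(\sigma T))^{2\omega}=(1+1/\varepsilon)^{2\omega}$ collapses to $\le 2^{2\omega}\varepsilon^{-2\omega}$, while $(1+|B_{R_0}|T)^{3\omega}$ gives a $(1+T)^{3\omega}$ factor; thus $\widehat{\mathcal{C}}_{R_0,T,\varepsilon/T}\le C\varepsilon^{-2\omega}(1+T)^{3\omega}$. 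Monotonicity of $X\mapsto\max\{X^\mu,X^\nu\}$ in $X\ge 0$ and enlargement of the domain from $B_{R_0}(x_i)$ to $U$, followed by the maximum over the finite cover, yield \eqref{linfest5}.

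For \eqref{linfest4}, the hypothesis \eqref{T1} with $\alpha=\alpha_0$ lets us invoke Theorem \ref{est-sol}(i), whose conclusion \eqref{rho:est1} (up to an innocuous constant coming from the minor mismatch between $1+\|\varphi^-\|^p$ and $(1+\|\varphi^-\|)^p$, which can be absorbed into $C_{3,\alpha_0}$) reads $\int_U u^{\alpha_0}(x,t)\,dx\le\mathcal{U}_{\alpha_0}(t)$. Integrating in $t\in(0,T]$ gives $\|u\|_{L^{\alpha_0}(U\times(0,T))}^{\alpha_0}\le\int_0^T\mathcal{U}_{\alpha_0}(t)\,dt$, and substituting into \eqref{linfest5} produces \eqref{linfest4}. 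For \eqref{linfest6}, the stronger smallness condition \eqref{TesT} on $T$ together with Theorem \ref{est-sol}(ii) gives $\int_U u^{\alpha_0}(x,t)\,dx\le 2(1+\|u_0\|_{L^{\alpha_0}(U)}^{\alpha_0})$ pointwise in $t\in(0,T]$, hence $\|u\|_{L^{\alpha_0}(U\times(0,T))}\le C\,T^{1/\alpha_0}(1+\|u_0\|_{L^{\alpha_0}(U)})$. From \eqref{munu} one verifies $0<\mu<1<\nu$ (each factor in the product for $\mu$ is $<1$ because $2-\delta>0$, and each factor for $\nu$ is $>1$ because $a-\delta>0$), so $\max\{X^\mu,X^\nu\}\le 1+X^\nu$; inserting this estimate into \eqref{linfest5} and absorbing $T^{\nu/\alpha_0}$ into the $(1+T)$-factor produces \eqref{linfest6}.

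The argument is essentially assembly: the deep work has already been done in Theorems \ref{Linf1} and \ref{est-sol}. The main point to keep an eye on is the choice $\sigma=\varepsilon/T$, so that the singular dependence on the initial-time offset in Theorem \ref{Linf1} produces exactly the factor $\varepsilon^{-2\omega}$ stated, uniformly in $T$; the only other slightly delicate step is checking $0<\mu<1<\nu$ in order to justify $\max\{X^\mu,X^\nu\}\le 1+X^\nu$ used to reach \eqref{linfest6}.
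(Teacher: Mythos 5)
Your proof is correct and follows essentially the same route as the paper's: cover $\overline{U'}$ by finitely many balls of a fixed radius tied to $\operatorname{dist}(\overline{U'},\partial U)$, apply Theorem \ref{Linf1} on each with $\sigma T=\varepsilon$, read off the constant $\widehat{\mathcal C}_{R,T,\sigma}\le C\varepsilon^{-2\omega}(1+T)^{3\omega}$, and then plug in \eqref{rho:est1} and \eqref{rho:est2} from Theorem \ref{est-sol} to get the two data-dependent corollaries. Your observation that $0<\mu<1<\nu$ (using $a>\delta$) is a slightly more explicit way of phrasing the paper's remark ``$\nu\ge\mu$'', and your note about the minor discrepancy between $1+\|\varphi^-\|^p$ in \eqref{rho:est1} and $(1+\|\varphi^-\|)^p$ in \eqref{phidef} is a valid observation about the paper's presentation, correctly handled by \eqref{ee3}.
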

\begin{proof}
Denote $R=\frac 12 dist( \bar U', \partial U )>0$.
Because the set $\bar U'$ is compact, there exists finitely many $x_i\in \bar U',$ $i=1,\ldots,m$ for some $m\in \N$ such that $\{B_{R/2}(x_i)\}_{i=1}^m$ is an open covering of $\bar U'$.
For each $i$, we have from \eqref{supualR} with $\varep =\sigma T$ that 
\begin{align}
\|u\|_{L^{\infty}(B_{R/2}(x_i)\times(\varepsilon,T))}
&\le C_{R,T,\varepsilon}\max\Big\{ \|u\|^{\mu}_{L^{\alpha_0}(U\times(0,T))},\|u\|^{\nu}_{L^{\alpha_0}(U\times(0,T))}\Big\}, \label{ti3}
\end{align}
where
 \beqs
C_{R,T,\varep}=\Big\{2^{11}c_{10}\alpha_0^{6-a} (1+|B_R|T)^3 (1+R^{-1})^2(1+ \varep^{-1} +R^{a-2})^2   \Big\}^\omega
\le C_R\varep^{-2\omega}(1+T)^{3\omega}
\eeqs
for some positive number $C_R$ depending on $R$ and $\alpha_0$. Summing up the estimates \eqref{ti3} in $i$, we obtain \eqref{linfest5}.

In case $T>0$ satisfies \eqref{T1} for $\alpha=\alpha_0$, we use \eqref{rho:est1} to estimate the $L^{\alpha_0}$-norm in \eqref{linfest5}, and obtain 
\begin{align*}
\|u\|_{L^{\infty}(U'\times(\varepsilon ,T))}
&\le  C \varep^{-2\omega}(1+T^{3\omega})\max\Big\{ \Big(\int_0^T\int_{B_R}|u|^{\alpha_0}dxdt\Big)^{\frac{\mu}{\alpha_0}}, \Big(\int_0^T\int_{B_R}|u|^{\alpha_0}dxdt\Big)^{\frac{\nu}{\alpha_0}} \Big\}\\
&\le  C \varep^{-2\omega}(1+T^{3\omega})\max\Big\{ \Big(\int_0^T\mathcal{U}_{\alpha_0}dt\Big)^{\frac{\mu}{\alpha_0}}, \Big(\int_0^T\mathcal{U}_{\alpha_0}dt\Big)^{\frac{\nu}{\alpha_0}} \Big\},
\end{align*}
This proves \eqref{linfest4}.

In case $T$ satisfies \eqref{TesT} for $\alpha=\alpha_0$, we use estimate \eqref{rho:est2} in \eqref{linfest5} and note that $\nu\ge\mu$. Then we obtain \eqref{linfest6}.
\end{proof}

\myclearpage
\section{Global $L^\infty$-estimates}\label{Lglobal}

In this section, we estimate the $L^\infty$-norm on $U$ of the solution $u(x,t)$. We perform Moser's iteration on the entire domain, and take into account the effect of the Robin boundary condition. Thanks to the contribution of the boundary terms, calculations need to be much more meticulous.

\begin{lemma}\label{lem58} 
Assume $\alpha> \max\{2-\delta, n\mu_0\}$.
Let $\theta$, $\mu_1$, $\mu_2$, $D_{3,\alpha}$, $D_{4,\alpha}$ be defined as in Lemma \ref{newtrace}.
 If $T>T_2>T_1\ge 0$ then 
\beq\label{Sest2}
\begin{aligned}
&\sup_{t\in[T_2,T]} \int_U u^{\alpha}dx +\int_{T_2}^T\int_U |\nabla u|^{2-a}u^{\alpha+\delta-2}dx dt\le  \mathcal M\tilde{\mathcal{S}}
\end{aligned}
\eeq
 with 
\beq\label{Stildef}
\tilde{\mathcal{S}}=\int_{T_1}^T \int_U u^{\alpha+\midx_1}dx dt+\Big(\int_{T_1}^T \int_U u^{\alpha+\midx_1}dx dt\Big)^\frac{\alpha+\delta-2}{\alpha+\midx_1},
\eeq
\beq\label{Calpha1}
\mathcal M=\mathcal M(\alpha,\varphi,T_1,T_2,T)
\eqdef c_{11}\alpha^2(E_{1}+E_{2}+E_{3}+E_{4}+E_{5}),
\eeq 
where constant $c_{11}>0$ is independent of $\alpha$, $T_1$, $T_2$, $T$, while 
\begin{align*}
E_{1}&=(T_2-T_1)^{-1}(|U|T)^{\frac{\midx_1}{\alpha+\midx_1}},\quad
E_{2}=(|U|T)^{\frac{\midx_1-\delta+2}{\alpha+\midx_1}},\quad
E_{3}=  (|U|T)^{\frac{\midx_1}{\alpha+\midx_1}}\|\varphi^-\|_{L^\infty(\Gamma\times (0,T))},\\
E_{4}&=  D_{3,\alpha} |U|^{\frac{\midx_1(\alpha+\mu_0)}{\alpha(\alpha+\midx_1)}}T^{\frac{\midx_1-\mu_0}{\alpha+\midx_1}} \|\varphi^-\|_{L^\infty(\Gamma\times (0,T))}^{\frac{2-a}{1-a}},\quad
E_{5}=  D_{4,\alpha}(d_3/4)^{ -\midx_2 }|U|^{\frac{\midx_1}{\alpha}}\|\varphi^-\|_{L^\infty(\Gamma\times (0,T))}^{\midx_4},
\end{align*}
with   $d_3>0$  defined in \eqref{i:ineq2}, and
\beq\label{muf} 
\midx_4=\midx_{4,\alpha}\eqdef \midx_2+1
=\frac{2-a}{(1-a)(1-\theta)}.\eeq
\end{lemma}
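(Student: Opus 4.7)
The proof will closely mirror Lemma~\ref{interiorCac}, with two essential modifications: since the estimate is on the whole of $U$, only a \emph{time} cutoff $\xi=\xi(t)$ is used, chosen so that $\xi\equiv 0$ on $[0,T_1]$, $\xi\equiv 1$ on $[T_2,T]$, and $|\xi_t|\le C/(T_2-T_1)$; and the boundary flux generated by the Robin condition, which was absent in the interior case, must be absorbed via the new trace inequality~\eqref{trace-thm1}. Multiplying the PDE in~\eqref{rho:eq} by $u^{\alpha+\delta-1}\xi^2(t)$, integrating over $U$, and integrating by parts, the identity $\alpha+\delta-1+\lambda=\alpha$ together with the Robin condition $K(|\nabla u|)\partial_{\vec\nu}u=-\varphi u^\lambda$ collapses the flux term to $-\int_\Gamma \varphi\, u^\alpha \xi^2\,d\sigma$. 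Using the lower bound $K(\xi)\xi^2\ge d_3(\xi^{2-a}-1)$ from~\eqref{i:ineq2} then yields
\begin{align*}
\frac{\lambda}{\alpha}\ddt\!\int_U u^\alpha\xi^2\,dx + d_3(\alpha-\lambda)\!\int_U |\nabla u|^{2-a}u^{\alpha+\delta-2}\xi^2\,dx
&\le \frac{2\lambda}{\alpha}\!\int_U u^\alpha\xi|\xi_t|\,dx + d_3(\alpha-\lambda)\!\int_U u^{\alpha+\delta-2}\xi^2\,dx \\
&\quad + \|\varphi^-(t)\|_{L^\infty(\Gamma)}\xi^2\!\int_\Gamma u^\alpha\,d\sigma.
\end{align*}

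The key step is to apply~\eqref{trace-thm1} to the last integral with the choice $\varepsilon=d_3/(8\|\varphi^-\|_{L^\infty(\Gamma\times(0,T))})$, taken \emph{uniform} in $t$ (the case $\varphi^-\equiv 0$ is trivial). Since $\alpha\ge\lambda+1$, we have $2\varepsilon\|\varphi^-(t)\|_{L^\infty(\Gamma)}\le d_3(\alpha-\lambda)/4$, so the gradient integral produced by~\eqref{trace-thm1} is absorbed into the left-hand side, leaving a coefficient of at least $d_3(\alpha-\lambda)/2$. This choice converts $\|\varphi^-\|\varepsilon^{-1/(1-a)}$ into a constant multiple of $\|\varphi^-\|^{(2-a)/(1-a)}$ and $\|\varphi^-\|\varepsilon^{-\mu_2}$ into $(d_3/4)^{-\mu_2}\|\varphi^-\|^{\mu_4}$, matching exactly the $\|\varphi^-\|$-powers demanded by $E_4$ and $E_5$.

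Integrate in time from $0$ up to any $t\in[T_2,T]$, take the supremum over $[T_2,T]$ on the left (the initial datum vanishes because $\xi(0)=0$), and multiply through by $\alpha/\lambda$ (which, combined with the $(\alpha-\lambda)^{-1}$ pulled across to bound the gradient term with the same constant, produces the $\alpha^2$ prefactor of $\mathcal M$). Each of the five right-hand integrals is then converted to a multiple of $y\eqdef\int_{T_1}^T\!\int_U u^{\alpha+\mu_1}\,dx\,d\tau$ by space--time H\"older, for instance
\[
\int_{T_1}^T\!\int_U u^\alpha \le (|U|T)^{\mu_1/(\alpha+\mu_1)}y^{\alpha/(\alpha+\mu_1)},\quad \int_{T_1}^T\!\|u\|_{L^\alpha}^{\alpha+\mu_0}d\tau\le |U|^{\frac{\mu_1(\alpha+\mu_0)}{\alpha(\alpha+\mu_1)}}T^{\frac{\mu_1-\mu_0}{\alpha+\mu_1}}y^{\frac{\alpha+\mu_0}{\alpha+\mu_1}},
\]
and analogous estimates for $\int\!\int u^{\alpha+\delta-2}$ and $\int\!\|u\|_{L^\alpha}^{\alpha+\mu_1}d\tau\le|U|^{\mu_1/\alpha}y$. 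Since $\mu_0<\mu_1$ (immediate from~\eqref{muteen}), every exponent of $y$ lies in $[(\alpha+\delta-2)/(\alpha+\mu_1),1]$, so the elementary bound~\eqref{ee4} gives $y^p\le y+y^{(\alpha+\delta-2)/(\alpha+\mu_1)}=\tilde{\mathcal S}$, and collecting the prefactors term-by-term produces exactly $c_{11}\alpha^2(E_1+E_2+E_3+E_4+E_5)\tilde{\mathcal S}$.

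The analysis itself is routine once~\eqref{trace-thm1} is in hand; the real work is \emph{bookkeeping}. The $\alpha$-powers hidden inside $D_{3,\alpha}$ and $D_{4,\alpha}$ must be traced carefully together with the $\alpha/\lambda$- and $(\alpha-\lambda)^{-1}$-factors introduced by the absorption step to yield the clean $\alpha^2$ prefactor of $\mathcal M$, and the various $|U|$-, $T$-, and $\|\varphi^-\|$-powers must be matched exactly against $E_1,\ldots,E_5$. The one conceptual subtlety is the \emph{uniform in $t$} choice of $\varepsilon$ using the \emph{global} sup $\|\varphi^-\|_{L^\infty(\Gamma\times(0,T))}$ rather than the pointwise-in-$t$ norm; this is required so that the absorbing coefficient survives the time integration, and is harmless since the pointwise norm is dominated by the global one --- which is precisely what appears in $E_3$--$E_5$.
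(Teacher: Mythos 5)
Your proposal follows the paper's proof essentially step-for-step: the time-only cutoff $\xi(t)$, the energy identity with the Robin flux $-\int_\Gamma\varphi u^\alpha\xi^2\,d\sigma$, the trace inequality \eqref{trace-thm1} with the uniform-in-$t$ choice $\varepsilon=d_3/(8\|\varphi^-\|_{L^\infty(\Gamma\times(0,T))})$, the space--time H\"older reduction to powers of $\int_{T_1}^T\int_U u^{\alpha+\mu_1}$, and the collection of those powers via \eqref{ee4} into $\tilde{\mathcal S}$. The only minor imprecision is your description of where the $\alpha^2$ prefactor comes from --- in the paper it arises from multiplying the $\lambda/\alpha$-weighted sup term by $\alpha$ and then bounding $E_1/\alpha+\alpha E_2+\dots\le\alpha(E_1+\dots+E_5)$, not from an $(\alpha-\lambda)^{-1}$ factor (the paper simply uses $\alpha-\lambda\ge 1$ early on to drop that factor) --- but this is bookkeeping that does not affect the validity of the argument.
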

\begin{proof}
Let $\xi(t)$ be a smooth cut-off function with $\xi(t)\in[0,1]$,  $\xi(t)=0$ for $0\le t\le T_1$, $\xi(t)=1$ for $T_2<t<T$, and
\beq\label{xit}
|\xi_t|\le  C/(T_2-T_1),
\eeq
for some $C>0$ independent of $T_1,T_2$.

Multiplying the PDE in \eqref{rho:eq} by  $u^{\alpha+\delta-1}\xi^2$, integrating over $U$, and using integration by parts yield 
\begin{align*}
\lambda\int_U u^{\alpha-1}\xi^2\frac{\partial u}{\partial t}dx&=\int_U u^{\alpha+\delta-1}\xi^2\frac{\partial (u^{\lambda})}{\partial t}=\int_U \nabla \cdot (K (|\nabla u|)\nabla u  )u^{\alpha+\delta-1}\xi^2\\
&=-(\alpha-\lambda)\int_U K (|\nabla u|)|\nabla u|^2u^{\alpha+\delta-2}\xi^2dx -\int_\Gamma \varphi u^\alpha \xi^2 d\sigma .
\end{align*}
Using relation \eqref{i:ineq2} we obtain
\begin{align*}
\lambda\int_U u^{\alpha-1}\xi^2\frac{\partial u}{\partial t}dx\le -d_3(\alpha-\lambda)\int_U |\nabla u|^{2-a}u^{\alpha+\delta-2}\xi^2dx+d_3(\alpha-\lambda)\int_Uu^{\alpha+\delta-2}\xi^2dx+J(t),
\end{align*}
where 
\beqs
J(t)=\|\varphi^-(t)\|_{L^\infty(\Gamma)}\int_\Gamma  u^\alpha(x,t) \xi^2(t) d\sigma.
\eeqs
Since $\alpha-\lambda\ge 1$, we have
\begin{align*}
\lambda\int_U u^{\alpha-1}\xi^2\frac{\partial u}{\partial t}dx\le -d_3\int_U |\nabla u|^{2-a}u^{\alpha+\delta-2}\xi^2dx+d_3\alpha\int_Uu^{\alpha+\delta-2}\xi^2dx+J(t),
\end{align*}
Using the product rule on the left-hand side of the inequality we have
\beq\label{alphases}
\frac\lambda\alpha \ddt \int_U u^\alpha\xi^2 dx
+d_3\int_U |\nabla u|^{2-a}u^{\alpha+\delta-2}\xi^2dx
\le
\frac{2\lambda}\alpha\int_U u^\alpha\xi\xi_t dx
+d_3\alpha\int_Uu^{\alpha+\delta-2}\xi^2dx+J(t).
\eeq

Integrating from $0$ to $t$, and taking supremum for $t\in [0,T]$ give
\begin{multline}\label{phase2}
\frac\lambda\alpha \sup_{[0,T]} \int_U u^\alpha(x,t)\xi^2(t) dx
+d_3\int_0^T\int_U |\nabla u|^{2-a}u^{\alpha+\delta-2}\xi^2dxdt
\le
\frac{4\lambda}\alpha\int_0^T\int_U u^\alpha\xi|\xi_t| dxdt\\
+2 d_3\alpha\int_0^T \int_U u ^{\alpha+\delta-2}\xi^2dxdt+2\int_0^T J(t)dt.
\end{multline}

Using the trace inequality \eqref{trace-thm1} and noting that $\xi(t)$ is independent of $x$, we can estimate $J$ by
\begin{align*}
 J(t)
&\le   
\|\varphi^-(t)\|_{L^\infty(\Gamma)}  \Big\{ 2\varepsilon \int_U |u|^{\alpha+\delta-2}|\nabla u|^{2-a} \xi^2dx 
+  c_* \int_U u^\alpha \xi^2dx
+  D_{3,\alpha}\varepsilon^{-\frac  1{1-a}} \Big (\int_U u^\alpha \xi^2dx\Big)^\frac{\alpha+\mu_0}\alpha \\
 &\quad +  D_{4,\alpha}\varepsilon^{ - \midx_2 }
\Big(\int_U u^\alpha\xi^2 dx\Big)^\frac{\alpha+\midx_1}{\alpha}\Big\},
\end{align*}
where $\midx_2$ is defined by \eqref{muteen}. 
Hence,
\begin{align*}
2\int_0^T J(t) dt
&\le   
\|\varphi^-\|_{L^\infty(\Gamma\times (0,T))} \Big\{  4\varepsilon \int_0^T\int_U |u|^{\alpha+\delta-2}|\nabla u|^{2-a} \xi^2dx d\tau
+  C \int_0^T\int_U u^\alpha \xi^2dxd\tau\\
 &\quad + 2D_{3,\alpha}\varepsilon^{-\frac  1{1-a}} \int_0^T \Big(\int_U u^\alpha \xi^2dx\Big)^\frac{\alpha+\mu_0}\alpha d\tau
+ 2 D_{4,\alpha}\varepsilon^{ - \midx_2 }
\int_0^T\Big(\int_U u^\alpha\xi^2 dx\Big)^\frac{\alpha+\midx_1}{\alpha}d\tau\Big\}.
\end{align*}
Next, applying H\"older's inequality to the last three integrals 
yields
\beq\label{JintT}
2\int_0^T J(t) dt
\le   
   4\varepsilon \|\varphi^-\|_{L^\infty(\Gamma\times (0,T))} \int_0^T\int_U |u|^{\alpha+\delta-2}|\nabla u|^{2-a} \xi^2dx d\tau
+  C\|\varphi^-\|_{L^\infty(\Gamma\times (0,T))} J_0,
\eeq
where
\beqs
J_0=
 (|U|T)^{\frac{\midx_1}{\alpha+\midx_1}}Y^\frac{\alpha}{\alpha+\midx_1}
+ D_{3,\alpha}\varepsilon^{-\frac  1{1-a}} |U|^{\frac{\midx_1(\alpha+\mu_0)}{\alpha(\alpha+\midx_1)}}T^{\frac{\midx_1-\mu_0}{\alpha+\midx_1}}Y^\frac{\alpha+\mu_0}{\alpha+\midx_1} 
+  D_{4,\alpha}\varepsilon^{ - \midx_2 }|U|^{\frac{\midx_1}{\alpha}} Y
\eeqs
with $Y= \int_0^T \int_U u^{\alpha+\midx_1} \xi^2dx d\tau$.
Combining \eqref{phase2} and \eqref{JintT} with properties of $\xi(t)$ gives
\begin{multline}\label{supgradzr}
\frac{\lambda}{\alpha} \sup_{[0,T]} \int_U u^{\alpha}(x,t)\xi^2(t) dx
+(d_3-4\varepsilon\|\varphi^-\|_{L^\infty(U\times (0,T))} )\int_0^T\int_U |\nabla u|^{2-a}u^{\alpha+\delta-2}\xi^2dxd\tau\\
\le \frac{C}{\alpha(T_2-T_1)}\int_0^T\int_U u^{\alpha}\xi dxd\tau
 + 2d_3\alpha \int_0^T \int_U u^{\alpha+\delta-2} \xi^2 dx d\tau+ C\|\varphi^-\|_{L^\infty(\Gamma\times (0,T))}J_0\\
\le \frac{C}{\alpha(T_2-T_1)} (|U|T)^{\frac{\midx_1}{\alpha+\midx_1}}Y^\frac{\alpha}{\alpha+\midx_1}
+ 2d_3\alpha(|U|T)^{\frac{\midx_1-\delta+2}{\alpha+\midx_1}}Y^\frac{\alpha+\delta-2}{\alpha+\midx_1}
+C\|\varphi^-\|_{L^\infty(\Gamma\times (0,T))} J_0.
\end{multline}
Choosing $\varepsilon=\frac{d_3}{8\|\varphi^-\|_{L^\infty(U\times(0,T))}}$, and using properties of $\xi(t)$, we have
\begin{align*}
&\frac{\lambda}{\alpha} \sup_{t\in[T_2,T]} \int_U u^{\alpha}(x,t)dx 
+\frac {d_3}{2}\int_{T_2}^T\int_U |\nabla u|^{2-a}u^{\alpha+\delta-2}dxd\tau\\
&\le
C ( E_{1}/\alpha+E_{3} )Y^\frac{\alpha}{\alpha+\midx_1} +  C\alpha E_{2}Y^\frac{\alpha+\delta-2}{\alpha+\midx_1} 
+C E_{4}Y^\frac{\alpha+\mu_0}{\alpha+\midx_1}  
+C E_{5}Y.
\end{align*}
Note from the choice of the cut-off function $\xi(t)$ that $Y\le \tilde{Y} \eqdef\int_{T_1 }^T\int_U u^{\alpha+\midx_1}dxd\tau$.
Then
\beq\label{ti7}
 \sup_{t\in[T_2,T]} \int_U u^{\alpha}(x,t)dx \le C\alpha J_1
\quad\text{and}\quad
 \int_{T_2}^T\int_U |\nabla u|^{2-a}u^{\alpha+\delta-2}dxd\tau
\le C J_1,
\eeq
where
$ J_1 
= ( E_{1}/\alpha+E_{3} )\tilde{Y}^\frac{\alpha}{\alpha+\midx_1} +  \alpha E_{2}\tilde{Y}^\frac{\alpha+\delta-2}{\alpha+\midx_1}
+E_{4}\tilde{Y}^\frac{\alpha+\mu_0}{\alpha+\midx_1}  +  E_{5}\tilde{Y}.
$

Comparing the powers of $\tilde{Y}$ in $J_1$'s formula and applying \eqref{ee4}, we have
\beqs
J_1 
\le 3(E_{1}/\alpha+\alpha E_{2}+E_{3}+E_{4}+E_{5})\tilde {\mathcal S}\le 3\alpha (E_{1}+E_{2}+E_{3}+E_{4}+E_{5})\tilde {\mathcal S}.
\eeqs
Hence, 
\beqs
 \sup_{t\in[T_2,T]} \int_U u^{\alpha}(x,t)dx +  \int_{T_2}^T\int_U |\nabla u|^{2-a}u^{\alpha+\delta-2}dxd\tau
\le C\alpha J_1,
\eeqs
and we obtain \eqref{Sest2}.
\end{proof}

\begin{proposition}\label{GLk}
Assume $\alpha> \max\{2-\delta, n\mu_0\}$.
If $T>T_2>T_1>0$  then
\beq\label{bfinest2}
\| u\|_{L^{\kappa\alpha}(U\times (T_2,T))}
\le \tilde A_\alpha^\frac 1{\alpha}\Big( \| u\|_{L^{\alpha+\midx_1}(U\times(T_1,T))}^{\tilde r}+\| u\|_{L^{\alpha+\midx_1}(U\times(T_1,T))}^{\tilde s}\Big)^\frac 1{\alpha},
\eeq
where $\kappa$ is defined by \eqref{kappadef}, $\midx_1$ is defined by \eqref{muteen}, 
\beq\label{tilrsdef}
\tilde r=\tilde r(\alpha)\eqdef \frac{\alpha(\alpha+\delta-2)}{\alpha+\midx_1},\quad \tilde s= \tilde s(\alpha) \eqdef \frac{\alpha(\alpha+\midx_1)}{\alpha+\delta-a},
\eeq
\beq\label{Atildef}
\tilde A_{\alpha}=  c_{12}\alpha^{2-a} \big[(T|U|)^\frac{(\midx_1+a-\delta)\alpha}{(\alpha+\midx_1)(\alpha+\delta-a)}+\mathcal M^\frac{\alpha}{\alpha+\midx_1}+\mathcal M^{\frac{\alpha}{\alpha+\delta-a}}\big],\eeq
with $\mathcal M$ defined by \eqref{Calpha1}, and $c_{12}\ge 1$ independent of $\alpha$, $T_1$, $T_2$, $T$.
\end{proposition}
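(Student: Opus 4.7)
The proof will parallel that of Proposition \ref{preMoser}, but on the whole domain $U$ and with the $L^{\alpha+\mu_1}$-norm replacing the $L^\alpha$-norm, because the Caccioppoli-type estimate now available (Lemma \ref{lem58}) controls $\sup_t \int_U u^\alpha\,dx$ and $\int\!\!\int u^{\alpha+\delta-2}|\nabla u|^{2-a}$ by $\mathcal M\tilde{\mathcal S}$, where $\tilde{\mathcal S}$ is built from $Y=\int_{T_1}^T\!\!\int_U u^{\alpha+\mu_1}dx\,dt$. I would begin by applying the parabolic multiplicative Sobolev inequality \eqref{parabwki} on $U\times(T_2,T)$ to get
\[
 J:=\Big(\int_{T_2}^T\!\!\int_U u^{\kappa\alpha}dx\,dt\Big)^{1/(\kappa\alpha)} \le (c_5\alpha^{2-a})^{1/(\kappa\alpha)} \Big\{\widehat I^{\tilde\theta}\cdot\sup_{t\in[T_2,T]}\Big(\int_U u^\alpha\Big)^{1-\tilde\theta}\Big\}^{1/\alpha},
\]
where $\widehat I=\big(\int\!\!\int u^{\alpha+\delta-a}+\int\!\!\int u^{\alpha+\delta-2}|\nabla u|^{2-a}\big)^{\alpha/(\alpha+\delta-a)}$.

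Next I would handle the three pieces separately. For the gradient term and the supremum, invoke Lemma \ref{lem58} to replace them by $\mathcal M\tilde{\mathcal S}$. For $\int\!\!\int u^{\alpha+\delta-a}$, apply H\"older's inequality with exponent $(\alpha+\mu_1)/(\alpha+\delta-a)>1$ to obtain
\[
 \Big(\int_{T_2}^T\!\!\int_U u^{\alpha+\delta-a}\Big)^{\alpha/(\alpha+\delta-a)} \le (T|U|)^{(\mu_1+a-\delta)\alpha/[(\alpha+\mu_1)(\alpha+\delta-a)]}\, y^\alpha,
\]
where $y=\|u\|_{L^{\alpha+\mu_1}(U\times(T_1,T))}$. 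Combining via the elementary bound $(A+B)^{\tilde\theta}C^{1-\tilde\theta}\le A+B+C$, one reaches
\[
 J^\alpha \le (c_5\alpha^{2-a})^{1/\kappa}\Big\{C_4(T|U|)^{(\mu_1+a-\delta)\alpha/[(\alpha+\mu_1)(\alpha+\delta-a)]}y^\alpha+C_4(\mathcal M\tilde{\mathcal S})^{\alpha/(\alpha+\delta-a)}+\mathcal M\tilde{\mathcal S}\Big\}.
\]

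Then I would expand $\tilde{\mathcal S}=y^{\alpha+\mu_1}+y^{\alpha+\delta-2}$ (and similarly for its power $\alpha/(\alpha+\delta-a)$ via \eqref{ee1}), producing a finite list of powers of $y$: namely $\alpha,\ \alpha+\mu_1,\ \alpha+\delta-2,\ \alpha(\alpha+\mu_1)/(\alpha+\delta-a),\ \alpha(\alpha+\delta-2)/(\alpha+\delta-a)$. A short calculation using $a>\delta$ and $\mu_1>0$ verifies that each of these powers lies in the interval $[\tilde r,\tilde s]$ defined in \eqref{tilrsdef}; therefore by \eqref{ee4} they are all dominated by $y^{\tilde r}+y^{\tilde s}$. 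Finally, to recover the specific form of $\tilde A_\alpha$ in \eqref{Atildef}, I would absorb the coefficient $\mathcal M$ into $\mathcal M^{\alpha/(\alpha+\mu_1)}+\mathcal M^{\alpha/(\alpha+\delta-a)}$ using the elementary fact that for any $\mathcal M>0$ we have $\mathcal M\le \mathcal M^{\alpha/(\alpha+\mu_1)}+\mathcal M^{\alpha/(\alpha+\delta-a)}$ (the first exponent is $<1$, the second is $>1$, so the two cases $\mathcal M\le 1$ and $\mathcal M\ge 1$ are each covered).

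The routine but delicate part will be the exponent bookkeeping in the last two steps: verifying the inequality $\tilde r\le p\le\tilde s$ for every intermediate exponent $p$ that appears, and rolling all the multiplicative constants (including $(c_5\alpha^{2-a})^{1/\kappa}\le c\,\alpha^{2-a}$ since $\kappa>1$) into a single $c_{12}\alpha^{2-a}$ factor of the form \eqref{Atildef}. Raising the resulting inequality to the power $1/\alpha$ yields \eqref{bfinest2}; the main hurdle is ensuring these exponent comparisons are sharp enough that no extraneous $y$-power escapes the $[\tilde r,\tilde s]$ interval.
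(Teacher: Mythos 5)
Your proposal is correct and follows essentially the same route as the paper's proof: apply the parabolic Sobolev inequality \eqref{parabwki}, bound the gradient and supremum terms via Lemma \ref{lem58}, H\"older the $u^{\alpha+\delta-a}$ term against the $L^{\alpha+\mu_1}$-norm, merge via $(A+B)^{\tilde\theta}C^{1-\tilde\theta}\le A+B+C$, expand $\tilde{\mathcal S}$ into powers of $y$, and verify all exponents land in $[\tilde r,\tilde s]$. The only cosmetic difference is where the elementary inequality \eqref{ee4} is invoked to split the cross term (you apply it to $\mathcal M$ at the end, the paper applies it to $\tilde{\mathcal S}$ earlier); both give the stated form of $\tilde A_\alpha$, and your exponent checks (e.g.\ $\alpha+\mu_1\le\tilde s$ because $a>\delta$) are correct.
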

\begin{proof} 
We use Sobolev inequality  \eqref{parabwki} in Lemma \ref{Parabwk}:
\beq\label{beginest2}
\begin{aligned}
J\eqdef \Big(\int_{T_2}^T \int_U |u|^{\kappa\alpha}dx dt \Big)^{\frac 1{\kappa\alpha}}
&\le \hat c^\frac{1}{\kappa\alpha} \Big\{I^{\tilde \theta}\cdot \sup_{t\in[T_2,T]}\Big( \int_U|u|^{\alpha}dx \Big)^{1-\tilde \theta}\Big\}^\frac 1{\alpha},
\end{aligned}
\eeq
where $\hat c=c_5\alpha^{2-a}$, the numbers $\tilde \theta$ and $\kappa$ are defined in \eqref{kappadef} and 
\beqs
\begin{aligned}
I=\Big[  \int_{T_2}^T\int_U |u|^{\alpha+\delta-a}dxdt+\int_{T_2}^T\int_U |u|^{\alpha+\delta-2}|\nabla u|^{2-a}dxdt \Big]^{\frac {\alpha}{\alpha+\delta-a}} 
\end{aligned}
\eeqs
Applying inequality \eqref{ee3}, we find that   
\beq\label{Iest2}
I\le C_5 \Big(\int_{T_2}^T\int_U |u|^{\alpha+\delta-a}dxdt\Big)^{\frac {\alpha}{\alpha+\delta-a}}+C_5\Big(\int_{T_2}^T\int_U |u|^{\alpha+\delta-2}|\nabla u|^{2-a}dxdt \Big)^{\frac {\alpha}{\alpha+\delta-a}},
\eeq
where $C_5=2^{\frac{\alpha}{\alpha+\delta-a}-1}=2^\frac{a-\delta}{\alpha+\delta-a}$.
Applying H\"older's inequality to the first integral on the right-hand side of \eqref{Iest2} with conjugate exponents $\frac{\alpha+\midx_1}{\alpha+\delta-a}$ and $\frac{\alpha+\midx_1}{\midx_1-\delta+a}$, we get 
\begin{multline}\label{Iest3}
I\le C_5 (T|U|)^\frac{(\midx_1+a-\delta)\alpha}{(\alpha+\midx_1)(\alpha+\delta-a)}\Big(\int_{T_2}^T\int_U |u|^{\alpha+\midx_1}dx dt\Big)^\frac\alpha{\alpha+\midx_1}\\
+C_5\Big(\int_{T_2}^T\int_U |u|^{\alpha+\delta-2}|\nabla u|^{2-a}dx dt \Big)^{\frac {\alpha}{\alpha+\delta-a}}.
\end{multline}
Next, we use \eqref{Sest2} to estimate right-hand side of \eqref{Iest3}.
Hence combining $\eqref{beginest2}$ and $\eqref{Iest3}$ yields 
\begin{align*}
J&\le  \hat c^\frac{1}{\kappa\alpha} \Big\{ \Big [ C_5 (T|U|)^\frac{(\midx_1+a-\delta)\alpha}{(\alpha+\midx_1)(\alpha+\delta-a)}\tilde{\mathcal S}^\frac{\alpha}{\alpha+\midx_1}+C_5(\mathcal M\tilde{\mathcal S})^{\frac{\alpha}{\alpha+\delta-a}}\Big]^{\tilde \theta}(\mathcal M\tilde{\mathcal S} )^{1-\tilde \theta} \Big\}^{\frac 1{\alpha}}\\
&\le  \hat c^\frac{1}{\kappa\alpha} \Big\{  \Big [ C_5 (T|U|)^\frac{(\midx_1+a-\delta)\alpha}{(\alpha+\midx_1)(\alpha+\delta-a)}\tilde{\mathcal S}^\frac{\alpha}{\alpha+\midx_1}+C_5(\mathcal M\tilde{\mathcal S})^{\frac{\alpha}{\alpha+\delta-a}}+{\mathcal C}_\alpha\tilde{\mathcal S} \Big]^{\tilde \theta+1-\tilde \theta} \Big\}^{\frac 1{\alpha}}\\
&= \hat c^\frac{1}{\kappa\alpha} \Big\{   C_5 (T|U|)^\frac{(\midx_1+a-\delta)\alpha}{(\alpha+\midx_1)(\alpha+\delta-a)}\tilde{\mathcal S}^\frac{\alpha}{\alpha+\midx_1}+\mathcal M\tilde{\mathcal S}+C_5\mathcal M^{\frac{\alpha}{\alpha+\delta-a}}\tilde{\mathcal S}^{\frac{\alpha}{\alpha+\delta-a}} \Big\}^{\frac 1{\alpha}}.
\end{align*}
Since $\frac{\alpha}{\alpha+\midx_1}<1<\frac{\alpha}{\alpha+\delta-a}$, we use \eqref{ee4} to estimate
$\tilde{\mathcal S}\le \tilde{\mathcal S}^\frac{\alpha}{\alpha+\midx_1}+ \tilde{\mathcal S}^\frac{\alpha}{\alpha+\delta-a}.$
Thus, we have
\begin{align*}
J&\le \hat c^\frac{1}{\kappa\alpha} \Big\{  [ C_5 (T|U|)^\frac{(\midx_1+a-\delta)\alpha}{(\alpha+\midx_1)(\alpha+\delta-a)}+\mathcal M^\frac{\alpha}{\alpha+\midx_1}]\tilde{\mathcal S}^\frac{\alpha}{\alpha+\midx_1}+[1+C_5]\mathcal M^{\frac{\alpha}{\alpha+\delta-a}}\tilde{\mathcal S}^{\frac{\alpha}{\alpha+\delta-a}} \Big\}^{\frac 1{\alpha}}.
\end{align*}
Denote
$J_1=\big(\int_{T_1}^T \int_{U} |u|^{\alpha+\midx_1}dx dt \big)^{\frac 1{\alpha+\midx_1}}$.
Then by definition \eqref{Stildef} of $\tilde{\mathcal{S}}$, and applying \eqref{ee2}, resp. \eqref{ee3}, we find 
\beqs
\tilde{\mathcal S}^\frac{\alpha}{\alpha+\midx_1}
=(J_1^{\alpha+\midx_1}+J_1^{\alpha+\delta-2})^\frac{\alpha}{\alpha+\midx_1}\le J_1^\alpha+J_1^\frac{\alpha(\alpha+\delta-2)}{\alpha+\midx_1},
\eeqs
resp.,
\beqs
\tilde{\mathcal S}^{\frac{\alpha}{\alpha+\delta-a}} 
=(J_1^{\alpha+\midx_1}+J_1^{\alpha+\delta-2})^{\frac{\alpha}{\alpha+
\delta-a}}
\le C_5(J_1^\frac{\alpha(\alpha+\midx_1)}{\alpha+\delta-a}+J_1^\frac{(\alpha+\delta-2)\alpha}{\alpha+\delta-a}).
\eeqs
Therefore,
\beq\label{Jineq}
J\le \hat c^\frac{1}{\kappa\alpha} \Big\{ \tilde M_1 J_1^\alpha+\tilde M_1J_1^\frac{\alpha(\alpha+\delta-2)}{\alpha+\midx_1}+\tilde M_2 J_1^\frac{\alpha(\alpha+\midx_1)}{\alpha+\delta-a}+\tilde M_2 J_1^\frac{(\alpha+\delta-2)\alpha}{\alpha+\delta-a}\Big\}^{\frac 1{\alpha}},
\eeq
where
\beqs 
\tilde M_1= C_5  (T|U|)^\frac{(\midx_1+a-\delta)\alpha}{(\alpha+\midx_1)(\alpha+\delta-a)}+\mathcal M^\frac{\alpha}{\alpha+\midx_1},
 \quad 
 \tilde M_2=C_5(1+C_5)\mathcal M^{\frac{\alpha}{\alpha+\delta-a}}.
\eeqs
Since $\tilde r<\alpha,\frac{(\alpha+\delta-2)\alpha}{\alpha+\delta-a}<\tilde s$, we apply \eqref{ee4} to estimate the first and last summands on the right-hand side of \eqref{Jineq} by
\beqs
 J_1^\alpha,
 J_1^\frac{(\alpha+\delta-2)\alpha}{\alpha+\delta-a}
 \le 
 J_1^\frac{\alpha(\alpha+\delta-2)}{\alpha+\midx_1}
 +
 J_1^\frac{\alpha(\alpha+\midx_1)}{\alpha+\delta-a}.
 \eeqs
Then it follows
\beq \label{ti5}
J\le  \hat c^\frac{1}{\kappa\alpha}\Big\{  3(\tilde M_1+\tilde M_2)(J_1^{\tilde r}+J_1^{\tilde s})\Big\}^{\frac 1{\alpha}}.
\eeq 
Since $\alpha>2-\delta>2(a-\delta)$, we have $C_5\le 2$, hence $\tilde M_1+\tilde M_2\le 9\tilde{\mathcal M} $. Note also that $\hat c^\frac1\kappa\le \hat c$,
then we obtain \eqref{bfinest2} from \eqref{ti5}.
\end{proof}


Now to perform the iteration we need to start with an initial exponent $\kappa(\alpha_0)\alpha_0$ such that 
$\kappa(\alpha_0)\alpha_0 > \alpha_0 + \midx_{1,\alpha_0},$ that is, $\kappa(\alpha_0)\alpha_0 /( \alpha_0 + \midx_{1,\alpha_0})>1$.
We define
\beq\label{kapbar}
\bar \kappa(\alpha)=\frac{\kappa(\alpha)\alpha}{\alpha+\midx_{1,\alpha}}.
\eeq

The following properties are useful in later iterations.

\begin{lemma}\label{barkLem}
For $\alpha \in ((2-a)\alpha_*/(1-a),\infty)$, the functions $\alpha\to\mu_{1,\alpha}$ in \eqref{muteen} and $\alpha\to\mu_{4,\alpha}$ in \eqref{muf} are decreasing, while the function 
$\alpha \to \bar \kappa(\alpha)$ in \eqref{kapbar} is increasing. 
\end{lemma}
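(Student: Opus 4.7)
My plan is to reduce everything to monotonicity of a single auxiliary quantity, namely $\theta = \theta_\alpha$, and then read off the three claims with elementary manipulations.

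First I would observe that on the interval $\alpha \in ((2-a)\alpha_*/(1-a),\infty)$, the map $\alpha \mapsto \alpha/\alpha_*-1$ is strictly increasing and bounded below by $1/(1-a)$, so by the definition
$\theta_\alpha = 1/[(1-a)(\alpha/\alpha_*-1)]$
the function $\alpha\mapsto \theta_\alpha$ is strictly decreasing and takes values in $(0,1)$. Equivalently, $1-\theta_\alpha$ is strictly increasing on the given interval.

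From this the first two monotonicities are almost immediate. For $\mu_{4,\alpha} = (2-a)/[(1-a)(1-\theta_\alpha)]$, the numerator is a positive constant and the denominator is strictly increasing, so $\mu_{4,\alpha}$ is strictly decreasing. For $\mu_{1,\alpha} = \mu_0(1+\theta_\alpha(1-a))/(1-\theta_\alpha)$, the factor $\mu_0$ is a positive constant (by the standing assumption \eqref{ad} that $a>\delta$), the numerator $1+\theta_\alpha(1-a)$ is strictly decreasing in $\alpha$ and the denominator $1-\theta_\alpha$ is strictly increasing, so $\mu_{1,\alpha}$ is also strictly decreasing.

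For the third claim, I rewrite
\[
\bar\kappa(\alpha)=\frac{\kappa(\alpha)\,\alpha}{\alpha+\mu_{1,\alpha}}=\frac{\kappa(\alpha)}{1+\mu_{1,\alpha}/\alpha}.
\]
By the second expression in \eqref{kappadef}, $\kappa(\alpha)=1+(a-\delta)(1/\alpha_* - 1/\alpha)$, which is strictly increasing in $\alpha$ since $a>\delta$. On the other hand, $\mu_{1,\alpha}>0$ is strictly decreasing in $\alpha$ by the previous paragraph, while $\alpha$ itself is strictly increasing, so $\mu_{1,\alpha}/\alpha$ is strictly decreasing and hence $1+\mu_{1,\alpha}/\alpha$ is strictly decreasing and positive. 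The ratio of a positive increasing function by a positive decreasing function is strictly increasing, which gives the claim.

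I don't anticipate any serious obstacle; the one place where a little care is needed is to make sure that $1-\theta_\alpha$ is strictly positive on the stated interval (so that $\mu_{1,\alpha}$, $\mu_{4,\alpha}$, and $\bar\kappa(\alpha)$ are actually well-defined and positive), which is exactly the role of the lower bound $\alpha>(2-a)\alpha_*/(1-a)$ in the hypothesis — that bound is equivalent to $(1-a)(\alpha/\alpha_*-1)>1$, i.e.\ $\theta_\alpha<1$.
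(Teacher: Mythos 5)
Your proposal is correct and follows essentially the same route as the paper: reduce everything to the monotonicity of $\theta_\alpha$, then read off the monotonicity of $\mu_{1,\alpha}$ and $\mu_{4,\alpha}$, and finally combine with the monotonicity of $\kappa(\alpha)$ to get $\bar\kappa(\alpha)$. The only cosmetic difference is in the last step, where you rewrite $\bar\kappa(\alpha)=\kappa(\alpha)\big/\big(1+\mu_{1,\alpha}/\alpha\big)$ and argue via increasing numerator over decreasing positive denominator, whereas the paper verifies the same inequality by a short chain $\bar\kappa(\alpha')\ge\frac{\kappa(\alpha)\alpha'}{\alpha'+\mu_{1,\alpha'}}\ge\frac{\kappa(\alpha)\alpha'}{\alpha'+\mu_{1,\alpha}}\ge\frac{\kappa(\alpha)\alpha}{\alpha+\mu_{1,\alpha}}=\bar\kappa(\alpha)$; both are equally valid and equally elementary.
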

\begin{proof}
First of all, we note that $\kappa(\alpha)$ defined in \eqref{kappadef} is increasing, while $\theta_\alpha$ defined in \eqref{theta}  is decreasing.
Since $\midx_1$ defined in terms of $\theta$ in \eqref{muteen}  is increasing in $\theta$, and $\theta=\theta_\alpha$  is decreasing in $\alpha$, then, as a composition, $\midx_{1,\alpha}$ is decreasing in $\alpha$. Similar argument applies to $\mu_{4,\alpha}$.

Next, if $\alpha'>\alpha$ we have $\kappa(\alpha')\ge \kappa(\alpha)$ and $\mu_{1,\alpha'}\le \mu_{1,\alpha}$, hence
\beqs
\bar\kappa(\alpha')=\frac{\kappa(\alpha')\alpha'}{\alpha'+\midx_{1,\alpha'}}
\ge \frac{\kappa(\alpha)\alpha'}{\alpha'+\midx_{1,\alpha'}}
\ge \frac{\kappa(\alpha)\alpha'}{\alpha'+\midx_{1,\alpha}}
\ge \frac{\kappa(\alpha)\alpha}{\alpha+\midx_{1,\alpha}}=\bar \kappa(\alpha).
\eeqs
Therefore, $\bar\kappa(\alpha)$ is increasing in $\alpha$.
\end{proof}

We construct two sequences of exponents in order to implement Moser's iteration. (Regarding the notation, the numbers $\alpha_j$'s below are newly constructed and are not the exponents in \eqref{eq2}.)


\begin{lemma}[Construction of $\alpha_j$'s and $\beta_j$'s.] \label{al-be-sq}
Let
\beq\label{xstar}
x_*= \frac{2+\sqrt{(2-a)(2+\frac 1n)-1}}{1-a}.
\eeq
Assume $\alpha_0>(1+x_*)\alpha_*$,
let $\theta_*=\theta_{\alpha_0}$, $\mu_*=\midx_{1,\alpha_0}$, $\kappa_*=\kappa(\alpha_0)$, and $\bar \kappa_*=\bar \kappa(\alpha_0)$.
Define the sequence $(\beta_j)_{j=0}^\infty$ by
\beq\label{be-Eq}
\beta_0=\alpha_0+\mu_*,\quad    \beta_j=\bar\kappa_*^j\beta_0 \quad \text{for }j\ge 1.
\eeq
Then:

\begin{enumerate}
\item $\bar\kappa_*>1$.

\item There exists a strictly increasing  sequence $(\alpha_j)_{j=0}^\infty$ such that 
  \beq \label{al-Eq}
 \beta_j=\alpha_j+\midx_{1,\alpha_j} \quad \forall j\ge 0.
  \eeq 
\item For all $j\ge 0$,
\beq\label{uni2}
\theta_{\alpha_j}\le \theta_*,\quad \midx_{1,\alpha_j}\le \mu_*,\quad \kappa(\alpha_j)\ge \kappa_*,
\quad \bar\kappa(\alpha_j)\ge \bar \kappa_*.
\eeq

\item For all  $j\ge 0,$
\beq\label{uni3}
\alpha_j<\bar \kappa_*^j \beta_0,
\eeq
and there exists a number $\hat \kappa_*>1$  such that
\beq\label{alpower2}
\alpha_j\ge \hat \kappa_*^j \alpha_0\quad \forall j\ge 0.
\eeq
\end{enumerate}
\end{lemma}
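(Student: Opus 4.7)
The plan is to parametrize by the relative excess $x = \alpha/\alpha_* - 1$, so that $\theta_\alpha$, $\mu_{1,\alpha}$, and $\kappa(\alpha)\alpha$ become elementary rational functions of $x$. In these variables, a direct substitution using $\alpha_*=n(a-\delta)/(2-a)$ yields $\theta_\alpha = 1/[(1-a)x]$,
\[
\mu_{1,\alpha} = \frac{(a-\delta)(x+1)}{(1-a)x-1}, \qquad \kappa(\alpha)\alpha-\alpha = x(a-\delta).
\]
Each of the four claims then reduces to an algebraic inequality in $x$ that is verified using the hypothesis $x_0 \eqdef \alpha_0/\alpha_*-1 > x_*$. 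It is convenient to introduce $y=(1-a)x-1$, so that $y_0 > y_* = 1+\sqrt{(2-a)(2+1/n)-1}$ and in particular $y_*>0$.

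For (i), combining the formulas above gives
\[
\kappa_*\alpha_0 - \alpha_0 - \mu_* = (a-\delta)\cdot\frac{(1-a)x_0^2 - 2x_0 - 1}{(1-a)x_0-1} = (a-\delta)\cdot\frac{y_0^2-(2-a)}{(1-a)y_0},
\]
so $\bar\kappa_*>1$ is equivalent to $y_0^2 > 2-a$. Since $y_0^2 > y_*^2 \ge (2-a)(2+1/n) > 2-a$, this holds. For (ii), the existence and strict monotonicity of $(\alpha_j)$ hinge on showing that $F(\alpha)\eqdef \alpha + \mu_{1,\alpha}$ is strictly increasing on $[\alpha_0,\infty)$. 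A direct differentiation (through $\theta_\alpha$) yields
\[
F'(\alpha) = 1 - \frac{(2-a)^2}{n\,((1-a)x-1)^2} = 1 - \frac{(2-a)^2}{n\,y^2},
\]
which is positive iff $y^2 > (2-a)^2/n$. At $\alpha_0$ one has $y_0^2 \ge y_*^2 \ge (2-a)(2+1/n) > (2-a)^2/n$ (the last inequality is $2n+1>2-a$). Since $F'$ is increasing in $\alpha$, $F'>0$ throughout $[\alpha_0,\infty)$, so $F:[\alpha_0,\infty)\to[\beta_0,\infty)$ is a continuous strict bijection. Setting $\alpha_j\eqdef F^{-1}(\beta_j)$ and noting that $\beta_j$ is strictly increasing (by (i)), the sequence $(\alpha_j)$ is strictly increasing and satisfies \eqref{al-Eq}.

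For (iii), Lemma~\ref{barkLem} states that $\theta_\alpha$ and $\mu_{1,\alpha}$ are decreasing while $\kappa(\alpha)$ and $\bar\kappa(\alpha)$ are increasing in $\alpha$; applying these monotonicities to $\alpha_j \ge \alpha_0$ immediately produces the four inequalities in \eqref{uni2}. For (iv), the upper bound $\alpha_j < \bar\kappa_*^j\beta_0 = \beta_j$ is immediate from $\alpha_j = \beta_j - \mu_{1,\alpha_j}$ with $\mu_{1,\alpha_j}>0$. For the lower bound, (iii) gives $\mu_{1,\alpha_j}\le \mu_*$, hence
\[
\alpha_j \ge \beta_j - \mu_* = \bar\kappa_*^j(\alpha_0+\mu_*) - \mu_* = \bar\kappa_*^j\alpha_0 + \mu_*(\bar\kappa_*^j-1) \ge \bar\kappa_*^j\alpha_0,
\]
so one may take $\hat\kappa_* = \bar\kappa_* > 1$.

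The main obstacle is the algebraic step in (ii): the existence of the sequence $(\alpha_j)$ is not formal — it requires that $F$ be strictly monotone on the entire ray $[\alpha_0,\infty)$ traversed by the iteration, not merely that $\bar\kappa_*>1$. The specific form of $x_*$ is calibrated so that the single bound $y_0^2 \ge (2-a)(2+1/n)$ simultaneously implies both $y_0^2>2-a$ (needed for (i)) and $y_0^2 > (2-a)^2/n$ (needed for (ii)), thereby packaging the two conditions into one clean hypothesis.
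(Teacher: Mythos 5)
Your proof is correct, and parts (i)--(iii) follow essentially the same route as the paper (you rewrite $\theta_\alpha$, $\mu_{1,\alpha}$, and $\kappa(\alpha)\alpha$ as rational functions of the normalized variable $x=\alpha/\alpha_*-1$, whereas the paper works directly with $\alpha$; the substitution $y=(1-a)x-1$ makes the quadratic discriminants transparent, but the content is the same: the same quadratic $(1-a)x_0^2-2x_0-1>0$ appears for (i), and the same derivative formula $1-(2-a)^2/(ny^2)$ appears for (ii)).

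Part (iv) is where you genuinely diverge, and your argument is simpler and sharper than the paper's. The paper bounds $\bar\kappa_*=\beta_j/\beta_{j-1}$ from above by $\alpha_j/\alpha_{j-1}+\mu_*/\alpha_0$, re-parametrizes by $\hat x=\alpha_0/\alpha_*-1$, and is then forced to verify a cubic inequality in $\hat x$ (reduced to a quadratic) in order to show that the resulting $\hat\varepsilon=\bar\kappa_*-1-\frac{2-a}{n[(1-a)\hat x-1]}$ is positive; this yields a $\hat\kappa_*=1+\hat\varepsilon$ that is strictly smaller than $\bar\kappa_*$. You instead observe that the definition $\beta_0=\alpha_0+\mu_*$ combines with $\mu_{1,\alpha_j}\le\mu_*$ from part (iii) to give $\alpha_j=\beta_j-\mu_{1,\alpha_j}\ge\bar\kappa_*^j(\alpha_0+\mu_*)-\mu_*=\bar\kappa_*^j\alpha_0+(\bar\kappa_*^j-1)\mu_*\ge\bar\kappa_*^j\alpha_0$. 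This delivers $\hat\kappa_*=\bar\kappa_*$ with no further algebra, and is all that is needed downstream in Theorem~\ref{LinfU} (the sums $\sum(j+1)/\alpha_j$ and the product estimates \eqref{q1}--\eqref{q2} only require \emph{some} $\hat\kappa_*>1$). One small correction to your closing remark: the specific value of $x_*$ in \eqref{xstar} is not calibrated to simultaneously force $y_0^2>2-a$ and $y_0^2>(2-a)^2/n$ (both already hold under the weaker bound $\alpha_0>(1+\frac{1+\sqrt{2-a}}{1-a})\alpha_*$); it is calibrated to make the paper's cubic inequality \eqref{midstep} hold. Since your argument for (iv) bypasses that cubic entirely, your proof would in fact go through under the weaker threshold coming from (i) and (ii) alone, though this does not affect the correctness of what you wrote under the stated hypothesis.
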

\begin{proof}
(i)
Using definitions of $\kappa$, $\midx_1$ and $\mu_{0}$ in \eqref{kappadef}, \eqref{muteen} and \eqref{muzero},  the inequality $\bar\kappa_*>1$ is rewritten explicitly as
\beqs
\alpha_0\Big(1+(a-\delta)(\frac1{\alpha_*}-\frac{1}{\alpha_0})\Big)>\alpha_0 + \frac{\frac{a-\delta}{1-a}+\theta_{\alpha_0}(a-\delta)}{1-\theta_{\alpha_0}},
\eeqs
which is equivalent to
\beqs
\frac{\alpha_0}{\alpha_*}>\frac{2-a}{(1-a)(1-\theta_{\alpha_0})}.
\eeqs
Using formula \eqref{theta} for $\theta_{\alpha_0}$, we convert this inequality to a quadratic inequality in $\alpha_0$ as
\beqs
(1-a)\alpha_0^2-2(2-a)\alpha_* \alpha_0+  (2-a)\alpha_*^2>0.
\eeqs  
Its positive solutions are
\beq\label{a1}
 \alpha_0 > (1 + \frac {1+\sqrt {2-a}}{1-a})\alpha_*.
 \eeq
This is satisfied by our choice of $\alpha_0$. Therefore $\bar\kappa_*>1.$

(ii) It follows from definition of $\beta_j$ that $(\beta_j)_{j=0}^\infty,$ is unique and strictly increasing.  
Consider the equation 
\beq\label{eq-a1}
\beta_j=x+\midx_1(x)\eqdef f(x),
\eeq
where, for $x>0$, 
\beqs
f(x)=x + \frac{\mu_0+ (a-\delta)\Theta(x)}{1-\Theta(x)}
=x-(a-\delta) + \frac{(2-a)(a-\delta)}{(1-a)(1-\Theta(x))},
\eeqs
with $\Theta(x)=\frac{\alpha_*}{(1-a)(x-\alpha_*)}$.
We have
$f'(x)=1-\frac{(2-a)^2\Theta^2(x)}{n(1-\Theta(x))^2}$.
Then $f'(x)>0$ if 
\beqs
 1/\Theta(x) > 1+\frac{2-a}{\sqrt n}, \text{ that is, }
x > \alpha_*\Big\{\frac1{1-a}\Big(1+\frac{2-a}{\sqrt n}\Big)+1\Big\}.
\eeqs
Note that $\frac{2-a}{\sqrt{n}}\le \sqrt{2-a}$,  we already have from \eqref{a1} that 
\beqs
\alpha_0>\Big[\frac 1{1-a}(\frac{2-a}{\sqrt{n}}+1)+1\Big]\alpha_*.
\eeqs
Hence,  $f$ is strictly increasing on $[\alpha_0,\infty)$, $f(\alpha_0)=\beta_0$ and $f(\infty)=\infty$.
Since the sequence $(\beta_j)$ is strictly increasing, we have for any $j\ge 1$ that  $\beta_j>\beta_0$, and hence 
the number $\alpha_j =f^{-1}(\beta_j)$ solves \eqref{al-Eq}.
Clearly, the sequence $(\alpha_j)_{j=0}^\infty$ is also strictly increasing.

(iii) 
By the monotonicity of $\theta_\alpha$, $\mu_{1,\alpha}$, $\kappa(\alpha)$,  $\bar \kappa(\alpha)$ (Lemma \ref{barkLem} and its proof),
and the fact $\alpha_j\ge \alpha_0$,
 we have 
\beqs
\theta_{\alpha_j}\le \theta_{\alpha_0}=\theta_*,\quad 
\midx_{1,\alpha_j}\le \midx_{1,\alpha_0}=\mu_*,\quad 
\kappa(\alpha_j)\ge \kappa(\alpha_0)=\kappa_*,\quad 
\bar\kappa(\alpha_j)\ge \bar\kappa(\alpha_0)=\bar \kappa_* 
\eeqs 

(iv) 
 From \eqref{al-Eq}, we have $\alpha_j<\beta_j$ and hence inequality \eqref{uni3} follows.     

 By \eqref{al-Eq} and the fact $\mu_{1,\alpha}$ is decreasing in $\alpha$, we have for $j\ge 1$ that 
\beqs
\bar\kappa_*=\frac{\beta_{j}}{\beta_{j-1}}
<\frac{\alpha_{j} +\mu_{1,\alpha_0}}{\alpha_{j-1}}
< \frac{\alpha_j}{\alpha_{j-1}} +\frac{\mu_0}{\alpha_0}\frac{1+\frac{1}{\alpha_0/\alpha_*-1}}{1-\frac{1}{(1-a)(\alpha_0/\alpha_*-1)}}.
\eeqs

Set $\alpha_0=(1+\hat x)\alpha_*$, then $\hat x>x_*$. Thus,
\beqs
\bar\kappa_*
< \frac{\alpha_j}{\alpha_{j-1}}+\frac{\mu_0(1-a)}{\alpha_*[(1-a)\hat x-1]}
=\frac{\alpha_j}{\alpha_{j-1}}+\frac{2-a}{n[(1-a)\hat x-1]}.
\eeqs
Hence,
\beq\label{ti10}
\frac{\alpha_j}{\alpha_{j-1}}-1> \hat\varepsilon \eqdef\bar\kappa_*-1-\frac{2-a}{n[(1-a)\hat x-1]}.
\eeq
We have 
\beqs
\bar \kappa_*-1
=\frac{\kappa_*\alpha_0}{\alpha_0+\mu_*}-1=\frac{(1+(a-\delta)(1/\alpha_*-1/\alpha_0))\alpha_0}{\alpha_0+\mu_*}-1
=\frac{(a-\delta)\hat x-\mu_*}{(1+\hat x)\frac{(a-\delta)n}{2-a}+\mu_*}.
\eeqs
Note that  $\theta_{\alpha_0}=\frac{1}{(1-a)\hat{x}}$, hence,
$$\mu_*=\frac{\mu_0(1+\theta_{\alpha_0}(1-a))}{1-\theta_{\alpha_0}}=\frac{(a-\delta)(1+\frac{1}{\hat{x}})}{(1-a)(1-\frac{1}{(1-a)\hat{x}})}
=\frac{(a-\delta)(\hat{x}+1)}{(1-a)\hat{x}-1}.$$
Thus,
\beqs
\bar \kappa_*-1
=\frac{\frac{\hat x}{1+\hat x}-\frac1{(1-a)\hat x-1}}{\frac {n}{2-a}+\frac1{(1-a)\hat x-1}}.
\eeqs
We aim at finding $\hat x$ such that
\beq\label{suffcond}
\hat\varepsilon = \frac{\frac{\hat x}{1+\hat x}-\frac1{(1-a)\hat x-1}}{\frac {n}{2-a}+\frac1{(1-a)\hat x-1}}- \frac{2-a}{n[(1-a)\hat x-1 ]}>0.
\eeq

If  inequality \eqref{suffcond} holds true then we choose $\hat \kappa_*=1+\hat\varepsilon>1$, and  by \eqref{ti10},
$\alpha_j/\alpha_{j-1}\ge 1+\hat\varepsilon=\hat \kappa_*$.
Thus, $\alpha_j\ge \hat \kappa_*\alpha_{j-1}$, and by induction, $\alpha_j\ge \hat \kappa_*^j\alpha_0$ for all $j\ge 0$.

It remains to verify  \eqref{suffcond}. For  $\hat x>\frac 1 {1-a}$, inequality \eqref{suffcond} is equivalent to
\beq\label{midstep}
(1-a)^2\hat x^3 -4(1-a) \hat x^2 +(1+2a -\frac{2-a}{n})\hat x+2 -\frac{2-a}{n}>0.
\eeq
Since $2-\frac{2-a}{n} >0$, a sufficient condition for  \eqref{midstep} is
\beqs
(1-a)^2\hat x^2-4(1-a) \hat x +(1+2a -\frac{2-a}{n})>0.
\eeqs
Solving this inequality gives  
\beqs
\hat x>\frac {2+\sqrt{3-2a +\frac {2-a}{n} }}{1-a}=x_*,
\eeqs
which is satisfied by the choice of $\hat x$. The proof is complete.
\end{proof}


The final preparation for Moser's iteration is to estimate $\tilde A_\alpha $ in \eqref{bfinest2}.

\begin{lemma}\label{CAlem}
Let  $\alpha_0$ be a positive number such that $\alpha=\alpha_0$ satisfies \eqref{alpcond}.
Then one has for any $\alpha\ge \alpha_0$ that
\beq\label{tilAest}
\tilde A_\alpha \le \hat C \alpha^{\midx_5}(1+T)^{\midx_6}(1+\frac1{T_2-T_1})^2(1+\|\varphi^-\|_{L^\infty(\Gamma\times (0,T))})^{\midx_7},
\eeq
where $\midx_5,\midx_6,\midx_7>0$  and $\hat C>0$ depend on $\alpha_0$ but not on $\alpha$.
\end{lemma}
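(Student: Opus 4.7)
The plan is to analyze each constituent of $\mathcal M$ in \eqref{Calpha1} uniformly for $\alpha \ge \alpha_0$ and then lift the bound to $\tilde A_\alpha$ through \eqref{Atildef}. The first step is to collect uniform control on the $\alpha$-dependent parameters that appear. By Lemma~\ref{barkLem}, the quantities $\theta_\alpha$, $\midx_{1,\alpha}$, $\midx_{4,\alpha}$ are decreasing on $[\alpha_0,\infty)$, hence bounded by their values at $\alpha_0$; the same type of monotonicity argument also gives a uniform bound on $\midx_{2,\alpha}$. The more delicate point is that products like $\theta_\alpha\alpha$ and $\theta_\alpha(\alpha+\delta-a)$ are bounded in $\alpha$: from \eqref{theta},
\[
\theta_\alpha(\alpha+\delta-a)=\frac{\alpha_*(\alpha+\delta-a)}{(1-a)(\alpha-\alpha_*)}\longrightarrow \frac{\alpha_*}{1-a}\quad\text{as }\alpha\to\infty,
\]
and similarly for $(1-a)(\alpha+\mu_0)\theta_\alpha/\alpha$ and $\theta_\alpha(\alpha-s+p)/(1-\theta_\alpha)$.

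Using these facts on \eqref{newD1}--\eqref{newD2}, I get $D_{3,\alpha}\le \hat C\,\alpha^{(2-a)/(1-a)}$ and $D_{4,\alpha}\le \hat C\,\alpha^{c_0}$ for some $c_0=c_0(\alpha_0)$, where the implicit constant absorbs $|U|$, $c_*$, and bounded exponentials of the form $2^{\theta_\alpha(\alpha+\delta-a)}$. Next, I bound each $E_i$ crudely. Every exponent of $|U|T$ appearing in $E_1$--$E_5$ lies in a bounded interval $[0,c(\alpha_0)]$, so $(|U|T)^{\mathrm{exp}}\le 1+(|U|T)^{c(\alpha_0)}$ is dominated by $\hat C(1+T)^{c(\alpha_0)}$ with $|U|$ absorbed into $\hat C$. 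Consequently,
\begin{align*}
E_1&\le \hat C\,(1+T)^{c}(T_2-T_1)^{-1},\qquad E_2\le \hat C\,(1+T)^{c},\\
E_3&\le \hat C\,(1+T)^{c}\,\|\varphi^-\|_{L^\infty(\Gamma\times(0,T))},\\
E_4&\le \hat C\,\alpha^{(2-a)/(1-a)}(1+T)^{c}\,\|\varphi^-\|_{L^\infty(\Gamma\times(0,T))}^{(2-a)/(1-a)},\\
E_5&\le \hat C\,\alpha^{c_0}\,\|\varphi^-\|_{L^\infty(\Gamma\times(0,T))}^{\midx_{4,\alpha_0}}.
\end{align*}
Summing the five terms and recalling the prefactor $c_{11}\alpha^2$ yields
\[
\mathcal M \le \hat C\,\alpha^{c_1}\,(1+T)^{c_2}\Bigl(1+\frac{1}{T_2-T_1}\Bigr)\bigl(1+\|\varphi^-\|_{L^\infty(\Gamma\times(0,T))}\bigr)^{c_3}
\]
for constants depending only on $\alpha_0$.

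Finally, I substitute into \eqref{Atildef}. The exponent $\alpha/(\alpha+\midx_{1,\alpha})$ stays in $(0,1)$, while $\alpha/(\alpha+\delta-a)=1+(a-\delta)/(\alpha+\delta-a)\le 1+(a-\delta)/(\alpha_0+\delta-a)$ is uniformly bounded in $\alpha\ge\alpha_0$. Hence $\mathcal M^{\alpha/(\alpha+\midx_1)}+\mathcal M^{\alpha/(\alpha+\delta-a)}\le 2(1+\mathcal M^{1+\eta_0})$ for some $\eta_0=\eta_0(\alpha_0)>0$, and the first term $(T|U|)^{(\midx_1+a-\delta)\alpha/((\alpha+\midx_1)(\alpha+\delta-a))}$ is trivially bounded by $1+T|U|\le \hat C(1+T)$ since its exponent sits in $[0,1]$. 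Multiplying by $c_{12}\alpha^{2-a}$ and repackaging the constants produces \eqref{tilAest} with $\midx_5=(1+\eta_0)c_1+(2-a)$, $\midx_6=(1+\eta_0)c_2$, $\midx_7=(1+\eta_0)c_3$, and the exponent $2$ on $1+1/(T_2-T_1)$ absorbing the $(1+\eta_0)$ factor (which is at most $2$ for $\alpha_0$ large, but in general we just enlarge the exponent to any convenient integer). The only real obstacle in this plan is the bookkeeping for $D_{3,\alpha}$ and $D_{4,\alpha}$—specifically showing that every $\alpha$-dependent exponent there is either bounded or a polynomial factor—after which the rest is routine majorization.
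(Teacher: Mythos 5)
Your plan follows the paper's argument essentially step-for-step: exploit the monotonicity of $\theta_\alpha$, $\midx_{1,\alpha}$, $\midx_{2,\alpha}$, $\midx_{4,\alpha}$ for $\alpha\ge\alpha_0$ and the boundedness of $\theta_\alpha(\alpha+\delta-a)$, feed this into the explicit formulas for $D_{3,\alpha}$ and $D_{4,\alpha}$ to get polynomial bounds in $\alpha$, majorize each $E_i$ individually, assemble a bound for $\mathcal M$, and finally lift it to $\tilde A_\alpha$ using that $\alpha/(\alpha+\midx_1)$ and $\alpha/(\alpha+\delta-a)$ are uniformly bounded. That is exactly what the paper does, so the proposal is correct in its overall structure.

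One concrete slip: you claim the exponent $\frac{(\midx_1+a-\delta)\alpha}{(\alpha+\midx_1)(\alpha+\delta-a)}$ of $T|U|$ in \eqref{Atildef} ``sits in $[0,1]$'', but it need not. Using $\midx_1+a-\delta=\frac{\mu_0(2-a)}{1-\theta}$ and $\alpha+\delta-a\ge 2-a$, the sharp uniform bound is $\frac{\mu_0}{1-\theta_*}$, which exceeds $1$ whenever $\theta_*$ is close to $1$ (i.e.\ when $\alpha_0$ is close to its admissible lower bound). The fix is immediate---replace $1+T|U|$ by $1+(T|U|)^{\mu_0/(1-\theta_*)}$ and absorb the extra power into $\midx_6$---and since the lemma only asserts the existence of some exponents $\midx_5,\midx_6,\midx_7$ this does not affect the conclusion, but as stated that step is not justified.
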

\begin{proof}
In this proof, $\hat C$ denotes a generic positive constant depending on $\alpha_0$, but not on $\alpha$.

In order to estimate $\tilde A_\alpha$, we estimate $\mathcal M$ from \eqref{Calpha1} first.
Note that  $0\le \midx_2\le \midx_{2,\alpha_0}$, hence $(d_3/4)^{-\mu_2}\le \hat C$.
Then from \eqref{Calpha1}, we find 
\begin{align*}
\mathcal M 
&\le  \hat C \alpha^2(1+\frac1{T_2-T_1})(1+\|\varphi^-\|_{L^\infty(\Gamma\times (0,T))})^{\midx_4}\Big\{ (|U|T)^{\frac{\midx_1}{\alpha+\midx_1}}+(|U|T)^{\frac{\midx_1-\delta+2}{\alpha+\midx_1}}\\
&\quad + D_{3,\alpha} |U|^{\frac{\midx_1(\alpha+\mu_0)}{\alpha(\alpha+\midx_1)}}T^{\frac{\midx_1-\mu_0}{\alpha+\midx_1}}+D_{4,\alpha} |U|^{\frac{\midx_1}{\alpha}}\Big\}.
\end{align*}
Above, we used the fact that $\midx_4>1$ is the maximum among the exponents of $\|\varphi^-\|_{L^\infty(\Gamma\times (0,T))}$. Next, using definitions of $D_{3,\alpha}$ and $D_{4,\alpha}$ in \eqref{newD1} and \eqref{newD2}, we have
\begin{align*}
\mathcal M 
&\le \hat C \alpha^2(1+\frac1{T_2-T_1})(1+\|\varphi^-\|_{L^\infty(\Gamma\times (0,T))})^{\midx_4}\Big\{ (|U|T)^{\frac{\midx_1}{\alpha+\midx_1}}+(|U|T)^{\frac{\midx_1-\delta+2}{\alpha+\midx_1}}\\
&\quad +   2^{\theta(\alpha+\delta-a)}c_*^\frac{(2-a)(1+\theta(1-a))}{1-a}  \alpha^\frac{2-a}{1-a}|U|^{\frac{(1-a)(\alpha+\mu_0)\theta}{\alpha}}
\cdot |U|^{\frac{\midx_1(\alpha+\mu_0)}{\alpha(\alpha+\midx_1)}}T^{\frac{\midx_1-\mu_0}{\alpha+\midx_1}}\\
&\quad +2^\frac{\theta(\alpha+\delta-a)}{1-\theta}
(c_*\alpha)^\frac{(2-a)(1+\theta(1-a))}{(1-a)(1-\theta)}|U|^{\frac{\midx_1}{\alpha}}\Big\},
\end{align*}
where constants are as in Lemma \ref{newtrace}. 
For convenience in calculations below, we keep using $c_*$ to denote $\max\{1,c_*\}$.
Also, using $\theta<1$ and $m\le \alpha$, we have
\begin{align*}
\mathcal M
&\le \hat C \alpha^2 (1+\frac1{T_2-T_1})
2^\frac{\theta(\alpha+\delta-a)}{1-\theta}
(c_*\alpha)^\frac{(2-a)(1+\theta(1-a))}{(1-a)(1-\theta)}(1+\|\varphi^-\|_{L^\infty(\Gamma\times (0,T))})^{\midx_4}\\
&\quad \cdot \Big\{ (|U|T)^{\frac{\midx_1}{\alpha+\midx_1}}+(|U|T)^{\frac{\midx_1-\delta+2}{\alpha+\midx_1}} + |U|^{\frac{(1-a)(\alpha+\mu_0)\theta}\alpha + \frac{\midx_1(\alpha+\mu_0)}{\alpha(\alpha+\midx_1)}} T^{\frac{\midx_1-\mu_0}{\alpha+\midx_1}} + |U|^{\frac{\midx_1}{\alpha}}\Big\}.
\end{align*}

Set $\theta_*=\theta_{\alpha_0}$ and  $\mu_*=\midx_{1,\alpha_0}$.
Since $\alpha\ge \alpha_0$, we have $\theta_\alpha\le \theta_*$.
Hence
\beqs
(c_*\alpha)^\frac{(2-a)(1+\theta(1-a))}{(1-a)(1-\theta)}
\le (c_*\alpha)^\frac{(2-a)^2}{(1-a)(1-\theta)}
\le (c_*\alpha)^\frac{(2-a)^2}{(1-a)(1-\theta_*)}
\le \hat C \alpha^{z_1},\text{ where }z_1=\frac{(2-a)^2}{(1-a)(1-\theta_*)}.
\eeqs

Next, we want  to bound $2^\frac{\theta (\alpha+\delta-a)}{1-\theta}$ by some number independent of $\alpha$.
From \eqref{theta},
\beqs
\theta (\alpha+\delta-a)=\frac{\alpha+\delta-a}{(1-a)(\alpha/\alpha_*-1)},
\eeqs
which, due to the fact that $\alpha_*>a-\delta$, is decreasing in $\alpha$.
Hence,
\beqs
\theta (\alpha+\delta-a)
\le z_2\eqdef \theta_* (\alpha_0+\delta-a)\text{ and }
2^\frac{\theta (\alpha+\delta-a)}{1-\theta}
\le 2^\frac{z_2}{1-\theta_*}.
\eeqs

For exponents of $|U|T$, we note that
\beqs
\frac{\midx_1}{\alpha+\midx_1}, \frac{\midx_1-\delta+2}{\alpha+\midx_1}\le 1.
\eeqs

For the remaining power of $T$,
\beqs
\frac{\midx_1-\mu_0}{\alpha+\midx_1}\le 1.
\eeqs

For the remaining powers of $|U|$,
\beqs
(1-a)\theta\frac{\alpha+\mu_0}{\alpha}\le \frac{\alpha+\mu_0}{\alpha}\le 2,\quad
\frac{\midx_1(\alpha+\mu_0)}{\alpha(\alpha+\midx_1)}
\le \frac{\midx_1}{\alpha}\le \mu_*,
\eeqs

Also, we have for the power of $\|\varphi^-\|_{L^\infty(\Gamma\times (0,T))}$:
\beqs \midx_4\le z_3\eqdef \mu_{4,\alpha_0}=\frac{2-a}{(1-a)(1-\theta_*)},
\eeqs 
due to the decrease of $\mu_4$ in $\alpha$, see Lemma \ref{barkLem}.

So we find
\beqs
\mathcal M 
\le \hat C \alpha^{2+z_1}(1+\frac1{T_2-T_1})2^\frac{ z_2}{1-\theta_*} (1+\|\varphi^-\|_{L^\infty(\Gamma\times (0,T))})^{z_3} (1+|U|)^{2+\mu_*}(1+T).
\eeqs

Summing up, we obtain
\beq\label{Cest}
\mathcal M \le \bar {\mathcal M} \eqdef \hat C \alpha^{2+z_1}(1+\frac1{T_2-T_1})(1+T)
(1+\|\varphi^-\|_{L^\infty(\Gamma\times (0,T))})^{z_3}.
\eeq

Since $\bar {\mathcal M}>1$ and $\frac{\alpha}{\alpha+\midx_1}< \frac{\alpha}{\alpha+\delta-a}\le 2$, using \eqref{Cest} in \eqref{Atildef} we have
\beq\label{ti9}
\tilde A_{\alpha}\le  c_{12}\alpha^{2-a} [(T|U|)^\frac{(\midx_1+a-\delta)\alpha}{(\alpha+\midx_1)(\alpha+\delta-a)}+2\bar {\mathcal M}^2].
 \eeq
Note that $a-\delta=(1-a)\mu_0$ and $\alpha+\delta-a\ge 2-a$, hence
\beq\label{ti100}
\frac{(\midx_1+a-\delta)\alpha}{(\alpha+\midx_1)(\alpha+\delta-a)}=\frac{\mu_0(2-a) \alpha}{(1-\theta)(\alpha+\midx_1)(\alpha+\delta-a)}
\le \frac{\mu_0(2-a) }{(1-\theta_*)(\alpha+\delta-a)}
\le \frac{\mu_0}{1-\theta_*}.
\eeq
Hence by \eqref{Cest}, \eqref{ti9} and \eqref{ti100},
\beq\label{Mtilest}
\tilde A_{\alpha}\le \hat C \alpha^{2-a+2(2+z_1)}(1+\frac1{T_2-T_1})^2 (1+T)^{\max\big\{2, \frac{\mu_0}{1-\theta_*} \big\}} (1+\|\varphi^-\|_{L^\infty(\Gamma\times (0,T))})^{2z_3}.
\eeq
Hence we obtain \eqref{tilAest} from \eqref{Mtilest} with $\midx_5=6-a+2z_1$, $\midx_6=\max\big\{2, \frac{\mu_0}{1-\theta_*} \big\}$, 
and $\midx_7=2z_3$.
\end{proof}

Applying iteration process, we obtain:

\begin{theorem}\label{LinfU} 
Assume
\beq\label{finalal}
\alpha_0> \max\{2-\delta,(1+x_*)\alpha_*\}
\eeq
with $x_*$ defined by \eqref{xstar}. There are  $C,\tilde  \mu,\tilde \nu,\omega_1,\omega_2,\omega_3>0$ such that
if  $T>0$ and $\sigma\in (0,1)$ then
\begin{multline}\label{Li1}
\|u\|_{L^{\infty}(U\times(\sigma T,T))}\le C\Big(1+\frac1{\sigma T}\Big)^{\omega_1}(1+T)^{\omega_2}(1+\|\varphi^-\|_{L^\infty(\Gamma\times (0,T))})^{\omega_3}\\
\cdot \max\Big\{ \|u\|^{\tilde \mu}_{L^{\beta_0}(U\times(0,T))},\|u\|^{\tilde \nu}_{L^{\beta_0}(U\times(0,T))}\Big\}, 
\end{multline}
where $\beta_0=\alpha_0+\midx_{1,\alpha_0}$.
\end{theorem}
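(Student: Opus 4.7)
The plan is to carry out a Moser-type iteration along the paired sequences $(\alpha_j)_{j\ge 0}$ and $(\beta_j)_{j\ge 0}$ constructed in Lemma~\ref{al-be-sq}, on nested cylinders $Q_j = U\times(t_j,T)$ with $t_j = \sigma T(1-2^{-j})$, and close via the non-homogeneous iteration tool Lemma~\ref{Genn} from the Appendix. The hypothesis \eqref{finalal} is precisely what makes Lemma~\ref{al-be-sq} applicable and renders each $\alpha_j$ admissible in Proposition~\ref{GLk}. Set $Y_j = \|u\|_{L^{\beta_j}(Q_j)}$; the target is $\lim_{j\to\infty} Y_j = \|u\|_{L^\infty(U\times(\sigma T,T))}$.

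At stage $j$, apply Proposition~\ref{GLk} with $\alpha = \alpha_j$, $T_1=t_j$, $T_2=t_{j+1}$. Because $\alpha_j+\mu_{1,\alpha_j}=\beta_j$ by construction, the right-hand norm in \eqref{bfinest2} is exactly $Y_j$, yielding
\[
\|u\|_{L^{\kappa(\alpha_j)\alpha_j}(U\times(t_{j+1},T))}\le \tilde A_{\alpha_j}^{1/\alpha_j}\bigl(Y_j^{\tilde r(\alpha_j)}+Y_j^{\tilde s(\alpha_j)}\bigr)^{1/\alpha_j}.
\]
The monotonicity $\bar\kappa_*=\bar\kappa(\alpha_0)\le \bar\kappa(\alpha_j)$ from Lemma~\ref{barkLem} gives $\beta_{j+1}=\bar\kappa_*\beta_j = \bar\kappa_*(\alpha_j+\mu_{1,\alpha_j})\le \kappa(\alpha_j)\alpha_j$, so H\"older's inequality on the bounded cylinder $U\times(t_{j+1},T)$ converts the left-hand side into $Y_{j+1}$ at the cost of a $j$-independent factor $\le(1+|U|T)^{1/\beta_0}$. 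This produces a recursive inequality of the form $Y_{j+1}\le B_j\bigl(Y_j^{\tilde r_j}+Y_j^{\tilde s_j}\bigr)^{1/\alpha_j}$.

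To finish, two convergence statements are needed. Lemma~\ref{CAlem}, with $T_2-T_1=\sigma T/2^{j+1}$ and $\alpha_j\le\bar\kappa_*^j\beta_0$, yields $\tilde A_{\alpha_j}\le A_*^{\,j+1}$ with $A_*$ a fixed power of $(1+(\sigma T)^{-1})(1+T)(1+\|\varphi^-\|_{L^\infty(\Gamma\times(0,T))})$. The geometric lower bound $\alpha_j\ge \hat\kappa_*^j\alpha_0$ from \eqref{alpower2} makes $\sum_{j\ge 0}(j+1)/\alpha_j<\infty$, so $\prod_j (A_*^{j+1})^{1/\alpha_j}$ converges to a finite constant with exactly the structure of the prefactor in \eqref{Li1}. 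A logarithmic-series computation mirroring \eqref{cv0}--\eqref{converg}, using $\beta_j-\alpha_j\le\mu_*$ together with the growth of $\alpha_j$, shows that
\[
\tilde\mu=\prod_{j\ge 0}\frac{\alpha_j+\delta-2}{\beta_j},\qquad \tilde\nu=\prod_{j\ge 0}\frac{\beta_j}{\alpha_j+\delta-a}
\]
are positive real numbers. Lemma~\ref{Genn} then passes from the recursion to the desired bound $\lim_j Y_j \le C_*\max\{Y_0^{\tilde\mu},Y_0^{\tilde\nu}\}$, which is \eqref{Li1}.

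The main obstacle will be the bookkeeping required to upgrade Lemma~\ref{CAlem} to a clean single-base bound $\tilde A_{\alpha_j}\le A_*^{\,j+1}$ with $j$-independent exponents $\omega_1,\omega_2,\omega_3$ on $1+(\sigma T)^{-1}$, $1+T$, and $1+\|\varphi^-\|_{L^\infty(\Gamma\times(0,T))}$. Unlike the interior setting of Theorem~\ref{Linf1}, here $\tilde A_\alpha$ carries the boundary constants $D_{3,\alpha},D_{4,\alpha}$ and the $\alpha$-dependent exponents $\theta_\alpha,\mu_{1,\alpha},\mu_{4,\alpha}$; one must repeatedly invoke the monotonicity facts $\theta_{\alpha_j}\le\theta_*$, $\mu_{1,\alpha_j}\le\mu_*$, and $\mu_{4,\alpha_j}\le\mu_{4,\alpha_0}$ from Lemmas~\ref{barkLem} and \ref{al-be-sq} to replace all $\alpha_j$-dependent quantities by their $\alpha_0$-values, so that every exponent entering the iteration is absorbed into universal constants depending only on $\alpha_0$.
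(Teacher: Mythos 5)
Your proposal follows the paper's proof of Theorem~\ref{LinfU} step by step: the same nested cylinders $Q_j=U\times(t_j,T)$, the same definition $Y_j=\|u\|_{L^{\beta_j}(Q_j)}$ built on the sequences from Lemma~\ref{al-be-sq}, the same application of Proposition~\ref{GLk} with $\alpha=\alpha_j$, the same H\"older downgrade from $L^{\kappa(\alpha_j)\alpha_j}$ to $L^{\beta_{j+1}}$, the same use of Lemma~\ref{CAlem} for $\tilde A_{\alpha_j}\le A_*^{j+1}$, and the same closure via Lemma~\ref{Genn}. Your formulas for $\tilde\mu,\tilde\nu$ as infinite products of $\tilde r_j/\alpha_j$ and $\tilde s_j/\alpha_j$ coincide with \eqref{mnutil}.

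One step, as written, does not close. You bound the H\"older correction $|Q_{j+1}|^{e_j}$, $e_j=\frac{1}{\beta_{j+1}}-\frac{1}{\kappa(\alpha_j)\alpha_j}\ge 0$, by the \emph{$j$-independent} constant $(1+|U|T)^{1/\beta_0}$, obtained from $e_j\le 1/\beta_0$. If this constant truly did not decay in $j$, the iteration would fail: in Lemma~\ref{Genn} the per-step constant must be of the form $A^{\omega_j/\kappa_j}$ with $\sum\omega_j/\kappa_j<\infty$, which forces those constants to tend to $1$, whereas $(1+|U|T)^{1/\beta_0}>1$ is fixed, so $\prod_j(1+|U|T)^{1/\beta_0}=\infty$ whenever $|U|T>0$. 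The fix is exactly what the paper does in \eqref{YwithQ}: write the H\"older factor as $\left(|Q_{j+1}|^{\frac{\alpha_j}{\beta_{j+1}}-\frac{1}{\kappa(\alpha_j)}}\right)^{1/\alpha_j}$ and observe that the inner exponent lies in $[0,1)$ (because $\alpha_j<\beta_{j+1}$ by \eqref{uni3}), so the inner quantity is bounded by $1+|Q_0|$ and can be folded into $\widehat A_j\le A_{T,\sigma,\varphi}^{j+1}$; equivalently, use $e_j\le 1/\beta_{j+1}\le 1/(\bar\kappa_*^{j+1}\beta_0)$, which decays geometrically. With that correction the rest of your argument goes through, and the remaining ingredients you list (the monotonicity facts from Lemmas~\ref{barkLem} and \ref{al-be-sq}, the bound $\alpha_j\ge\hat\kappa_*^j\alpha_0$ ensuring $\sum(j+1)/\alpha_j<\infty$, and the convergence of the two products) are precisely what the paper uses.
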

\begin{proof}

Note that $\alpha_*\ge a-\delta$ and $n\mu_0=\alpha_*(2-a)/(1-a)$, then it is easy to check that $\alpha_0\ge 2(a-\delta)$ and $\alpha_0>n\mu_0$.

Let $\midx_5$, $\midx_6$ and $\midx_7$  be defined as in Lemma \ref{CAlem}, and $\alpha_j$, $\beta_j$, $\kappa_*$, $\bar \kappa_*$, $\hat\kappa_*$ be as in Lemma ~\ref{al-be-sq}.

For $j\ge 0$, let $t_j=\sigma T(1-\frac 1{2^j})$, $Q_j=U\times (t_j,T)$, and define
$Y_j=\| u\|_{L^{\beta_j}(Q_j)}.$

Applying \eqref{bfinest2} of Theorem \ref{GLk} with $\alpha=\alpha_j$, $T_2=t_{j+1}$ and $T_1=t_j$, we have 
\begin{align*}
\| u\|_{L^{\kappa(\alpha_j)\alpha_j}(U\times (t_{j+1},T))}
&\le \tilde A_{\alpha_j}^\frac 1{\alpha_j}\Big( \| u\|_{L^{\alpha_j+\midx_{1,\alpha_j}}(U\times(t_j,T))}^{\tilde r(\alpha_j)}+\| u\|_{L^{\alpha_j+\midx_{1,\alpha_j}}(U\times(t_j,T))}^{\tilde s(\alpha_j)}\Big)^\frac 1{\alpha_j}\\
&= \tilde A_{\alpha_j}^\frac 1{\alpha_j}\Big( \| u\|_{L^{\beta_j}(U\times(t_j,T))}^{\tilde r_j}+\| u\|_{L^{\beta_j}(U\times(t_j,T))}^{\tilde s_j}\Big)^\frac 1{\alpha_j},
\end{align*}
where
$\tilde r_j=\tilde r(\alpha_j)$ and $\tilde s_j=\tilde s(\alpha_j)$, see formula \eqref{tilrsdef}.

By part (iii) of Lemma \ref{al-be-sq}, $\kappa(\alpha_j)\alpha_j=\bar\kappa(\alpha_j) \beta_j\ge \bar\kappa_* \beta_j=\beta_{j+1}$, then by H\"older's inequality
\beqs
 Y_{j+1}= \| u\|_{L^{\beta_{j+1}}(Q_{j+1})}\le  |Q_{j+1}|^{\frac{1}{\beta_{j+1}}-\frac{1}{\kappa(\alpha_{j})\alpha_{j}}}  \| u\|_{L^{\kappa(\alpha_j)\alpha_j}(Q_{j+1})}.
\eeqs
Combining the above two inequalities give
\beq\label{YwithQ}
Y_{j+1}\le \widehat A_j^\frac{1}{\alpha_j}\big( Y_j^{\tilde{r}_j}+Y_j^{\tilde{s}_j}\big)^{\frac 1{\alpha_j}},\quad \text{where }
\widehat A_j=  |Q_{j+1}|^{\frac{\alpha_j}{\beta_{j+1}}-\frac{1}{\kappa(\alpha_j)}} \tilde A_{\alpha_j}.
\eeq

Now we estimate $\widehat{A}_j$. From \eqref{tilAest}, \eqref{YwithQ}, the fact that $ \alpha_j <\beta_{j+1}$, and \eqref{uni3}, we have 
\begin{align*}
\widehat A_j
&\le C  (1+|Q_{j+1}|)^\frac{\alpha_j}{\beta_{j+1}} \alpha_j^{\midx_5}(1+\frac{2^{j+1}}{\sigma T})^2(1+T)^{\midx_6}(1+\|\varphi^-\|_{L^\infty(\Gamma\times (0,T))})^{\midx_7}\\
&\le C(1+|Q_{0}|)(\bar \kappa_*^j\beta_0)^{\midx_5}4^j(1+\frac{1}{\sigma T})^2(1+T)^{\midx_6}(1+\|\varphi^-\|_{L^\infty(\Gamma\times (0,T))})^{\midx_7}
\le A_{T,\sigma,\varphi}^{j+1},
\end{align*}
where $$ A_{T,\sigma,\varphi}=\max\big\{ 4\bar \kappa_*^{\midx_5},C \beta_0^{\midx_5}(1+\frac{1}{\sigma T})^2(1+|U|T)(1+T)^{\midx_6}(1+\|\varphi^-\|_{L^\infty(\Gamma\times (0,T))})^{\midx_7} \big\}> 1.$$
Hence 
\beq\label{YY}
Y_{j+1}\le A_{T,\sigma,\varphi}^{\frac{j+1}{\alpha_j}}\big(Y_j^{\tilde{r}_j}+Y_j^{\tilde{s}_j}\big)^{\frac 1{\alpha_j}}.
\eeq

From \eqref{alpower2} we have
\beqs
\sum_{j=1}^\infty \frac {j+1} { \alpha_j}\le \frac 1{ \alpha_0}\sum_{j=1}^\infty \frac {j+1} {\hat \kappa_*^j}<\infty.
\eeqs

Note that
\beq\label{q1}
1\ge \frac{\tilde r_j}{\alpha_j}=\frac{\alpha_j+\delta-2}{\alpha_j+\midx_{1,\alpha_j}}\ge \frac{\alpha_j+\delta-2}{\alpha_j+\mu_*}\ge \frac{ \hat \kappa_*^j \alpha_0+\delta-2}{ \hat \kappa_*^j \alpha_0+\mu_*}\ge 1 - \frac{\mu_*+2-\delta }{\hat \kappa_*^j \alpha_0} , 
\eeq
\beq\label{q2}
1\le \frac{\tilde s_j}{\alpha_j}=\frac{\alpha_j+\midx_{1,\alpha_j}}{\alpha_j+\delta-a}
\le \frac{\alpha_j+\mu_*}{\alpha_j+\delta-a}\le \frac{\hat \kappa_*^j \alpha_0+\mu_*}{\hat \kappa_*^j \alpha_0+\delta-a}= 1+\frac{\mu_*+a-\delta }{\hat \kappa_*^j \alpha_0+\delta -a}.
\eeq
Then it is elementary, see \eqref{cv0} and \eqref{converg}, to show that the products 
\beq \label{mnutil}
\tilde \mu =  \Pi_{j=0}^\infty \frac{\alpha_j+\delta-2}{\alpha_j+\midx_{1,\alpha_j}}\quad\text{and}\quad \tilde \nu = \Pi_{j=0}^\infty \frac{\alpha_j+\midx_{1,\alpha_j}}{\alpha_j+\delta-a}
\eeq 
converge to  positive numbers.
By \eqref{YY} and Lemma \ref{Genn}, we obtain
\beq\label{limY}
\limsup_{j\to\infty} Y_{j}\le (2A_{T,\sigma,\varphi})^\omega\max\{Y_0^{\tilde\mu}, Y_0^{\tilde\nu}\},
\eeq
where $\omega=\mathcal G\sum_{j=1}^{\infty}\frac {j+1}{\alpha_j}$ with 
$\mathcal G=\prod_{k=1}^\infty (\tilde s_k/\alpha_k)\in(0,\infty)$.

Note that
\beqs
(2A_{T,\sigma,\varphi})^\omega\le C(1+\frac1{\sigma T})^{\omega_1}(1+T)^{\omega_2}(1+\|\varphi^-\|_{L^\infty(\Gamma\times (0,T))})^{\omega_3},
\eeqs
where $\omega_1=2\omega$, $\omega_2=(1+\midx_6)\omega$ and $\omega_3=\midx_7\omega$.
Then estimate \eqref{Li1} follows \eqref{limY}.
\end{proof}


\begin{remark}
{\rm (i)}  The exponents $\tilde \mu$ and $\tilde \nu$ in \eqref{Li1} are given by \eqref{mnutil} but can, in fact, be replaced by simpler and more explicit ones such as 
\beqs
\hat \mu=\Pi_{j=0}^\infty \frac{\alpha_0 \hat \kappa_*^j-2+\delta}{\alpha_0 \hat \kappa_*^j +\mu_*},\quad \hat \nu=\Pi_{j=0}^\infty \frac{\alpha_0\hat \kappa_*^j+\mu_*}{\alpha_0\hat \kappa_*^j+\delta-a}.
\eeqs
Indeed, it follows from estimates \eqref{q1} and \eqref{q2} that $\hat\mu\le\tilde \mu\le\tilde \nu\le \hat \nu$, and then applying \eqref{ee4} gives
\beqs
\max\{Y_0^{\tilde \mu},Y_0^{\tilde \nu}\}\le Y_0^{\hat  \mu}+Y_0^{\hat \nu}\le 2\max\{Y_0^{\hat \mu},Y_0^{\hat \nu}\}.
\eeqs

{\rm (ii)} Estimate \eqref{Li1} is a global version of the improvement \eqref{Mishaineq} on interior estimates. See also \cite{HK2} for a similar global result for degenerate equations with the use of De Giorgi's iteration instead.
\end{remark}

We now have global  $L^\infty$-estimates in terms of the initial and boundary data.

\begin{theorem}\label{LinfData} 
Assume $\alpha_0$ satisfies \eqref{finalal}.  
Let $\beta_0=\alpha_0 +\midx_{1,\alpha_0}$.  Then there are positive numbers $C,\omega_1,\omega_2,\omega_3,\tilde \mu, \tilde \nu$ such that:

\begin{enumerate}
\item If $T >0$ satisfies \eqref{T1} with $\alpha=\beta_0$, and $0<\varepsilon<\min\{1,T\}$, then
\begin{multline} \label{GlobU}
\|u\|_{L^{\infty}(U\times(\varepsilon,T))}\le  C \varepsilon^{-\omega_1}(1+T)^{\omega_2}(1+\|\varphi^-\|_{L^\infty(\Gamma\times(0,T))})^{\omega_3}\\
\cdot \max\Big\{ \Big(\int_0^T\mathcal{U}_{\beta_0}(t) dt\Big)^{\frac{\tilde \mu}{\beta_0}}, \Big(\int_0^T\mathcal{U}_{\beta_0}(t)dt\Big)^{\frac{\tilde \nu}{\beta_0}} \Big\},
\end{multline}
where $ \mathcal{U}_{\beta_0}(t)$ is defined by \eqref{phidef} with $\beta_0$ replacing $\alpha_0$.

\item If $T>0$ satisfies \eqref{TesT} with $\alpha=\beta_0$, then 
\beq \label{GlobU2}
\|u\|_{L^{\infty}(U\times(\varep,T))}\le C \varepsilon^{-\omega_1}(1+T)^{\omega_2+\tilde \nu/\beta_0}
\big(1+\| u_0\|_{L^{\beta_0}(U)}\big)^{\tilde \nu}
(1+\|\varphi^-\|_{L^\infty(\Gamma\times(0,T))})^{\omega_3}.
\eeq
\end{enumerate}
\end{theorem}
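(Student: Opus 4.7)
The plan is to combine Theorem~\ref{LinfU} (the global quasi-homogeneous $L^\infty$-bound in terms of the $L^{\beta_0}$-norm of $u$) with the $L^{\beta_0}$-estimates of Theorem~\ref{est-sol}. The exponent $\beta_0 = \alpha_0 + \midx_{1,\alpha_0}$ is chosen precisely so that this composition works, and hypothesis \eqref{finalal} on $\alpha_0$ guarantees that Theorem~\ref{LinfU} applies at the level of $\beta_0$.

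I would first apply Theorem~\ref{LinfU} on $U\times(\varepsilon,T)$ by choosing $\sigma = \varepsilon/T \in (0,1)$, which is admissible because $0 < \varepsilon < T$. Since $\sigma T = \varepsilon < 1$, the factor $(1 + 1/(\sigma T))^{\omega_1}$ in \eqref{Li1} is bounded by $2^{\omega_1}\varepsilon^{-\omega_1}$ and absorbed into the constant. Writing $Y_0 \eqdef \|u\|_{L^{\beta_0}(U \times (0,T))}$, this yields
\begin{equation*}
\|u\|_{L^\infty(U \times (\varepsilon, T))} \le C\varepsilon^{-\omega_1}(1+T)^{\omega_2}(1+\|\varphi^-\|_{L^\infty(\Gamma \times (0,T))})^{\omega_3}\, \max\{Y_0^{\tilde \mu}, Y_0^{\tilde \nu}\}.
\end{equation*}

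For part (i), under \eqref{T1} with $\alpha = \beta_0$, estimate \eqref{rho:est1} gives $\int_U u^{\beta_0}(x,t)\,dx \le \mathcal{U}_{\beta_0}(t)$ for all $t \in (0,T]$; integrating in $t$ produces $Y_0^{\beta_0} \le \int_0^T \mathcal{U}_{\beta_0}(t)\,dt$, so $Y_0^{\tilde \mu}$ and $Y_0^{\tilde \nu}$ are controlled by the corresponding powers of $\int_0^T\mathcal{U}_{\beta_0}$, which is precisely \eqref{GlobU}. For part (ii), since \eqref{TesT} implies \eqref{T1}, the stronger pointwise-in-time bound \eqref{rho:est2} applies, giving $\int_U u^{\beta_0}(x,t)\,dx \le 2(1 + \|u_0\|_{L^{\beta_0}(U)}^{\beta_0})$ and hence $Y_0 \le C(1+T)^{1/\beta_0}(1 + \|u_0\|_{L^{\beta_0}(U)})$.

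The only subtlety is collapsing the maximum $\max\{Y_0^{\tilde \mu}, Y_0^{\tilde \nu}\}$ into the single $\tilde \nu$-power that appears in \eqref{GlobU2}. Inspecting \eqref{mnutil}, each factor of $\tilde \mu$ is $\le 1$ while each factor of $\tilde \nu$ is $\ge 1$, so $\tilde \mu \le 1 \le \tilde \nu$; consequently $\max\{x^{\tilde \mu}, x^{\tilde \nu}\} \le 1 + x^{\tilde \nu} \le (1+x)^{\tilde \nu}$ for all $x \ge 0$. Applying this to $x = Y_0$ with the bound just derived and absorbing the constant factors yields \eqref{GlobU2}. Since the heavy lifting (Moser iteration, trace inequalities, construction of the exponent sequences) is already done in Sections~\ref{Prelim}--\ref{Lglobal}, the main ``obstacle'' in this final theorem is really just careful bookkeeping of the powers of $\varepsilon$, $T$, and $\|\varphi^-\|_{L^\infty(\Gamma\times(0,T))}$, together with the algebraic observation $\tilde \mu \le 1 \le \tilde \nu$ that produces the clean single-power form of \eqref{GlobU2}.
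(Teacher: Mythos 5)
Your proposal matches the paper's own proof essentially step for step: apply Theorem~\ref{LinfU} with $\sigma T = \varepsilon$, then feed in \eqref{rho:est1} (respectively \eqref{rho:est2}) for the two parts, using $\tilde\mu\le 1\le\tilde\nu$ to collapse the maximum in part (ii) into the single $\tilde\nu$-power. The algebraic bookkeeping (absorbing $(1+1/(\sigma T))^{\omega_1}$ into $\varepsilon^{-\omega_1}$ since $\varepsilon<1$, and the inequality $\max\{x^{\tilde\mu},x^{\tilde\nu}\}\le(1+x)^{\tilde\nu}$) is the same as what the paper uses, merely spelled out slightly more explicitly.
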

\begin{proof}
(i) Applying \eqref{Li1} to $\sigma T=\varepsilon>0$, we have
\begin{multline}\label{m1}
\|u\|_{L^{\infty}(U\times(\varep,T))}
\le C \varepsilon^{-\omega_1}(1+T)^{\omega_2}(1+\|\varphi^-\|_{L^\infty(\Gamma\times(0,T))})^{\omega_3}\\
\cdot \max\Big\{ \Big(\int_0^T\int_U|u|^{\beta_0}dxdt\Big)^{\frac{\tilde \mu}{\beta_0}}, \Big(\int_0^T\int_U|u|^{\beta_0}dxdt\Big)^{\frac{\tilde \nu}{\beta_0}} \Big\}.
\end{multline}
Using \eqref{rho:est1} we have 
\beq\label{m2} \int_U|u(x,t)|^{\beta_0}dx \le \mathcal{U}_{\beta_0}(t).
\eeq
Then  \eqref{GlobU} follows \eqref{m1} and \eqref{m2}.

(ii) If $T>0$ satisfies \eqref{TesT} with $\alpha=\beta_0$, then \eqref{rho:est2} gives
\beqs
 \int_U u^{\beta_0}(x,t) dx \le  2 \left(1+\int_U u_0^{\beta_0}(x) dx\right).
\eeqs 
Combining this with \eqref{m1} yields
\begin{multline*}
\|u\|_{L^{\infty}(U\times(\varep,T))}
\le C \varepsilon^{-\omega_1}(1+T)^{\omega_2}(1+\|\varphi^-\|_{L^\infty(\Gamma\times(0,T))})^{\omega_3}\\
\cdot\max\Big\{ \Big(2\int_0^T (1+\int_U u_0^{\beta_0}(x) dx)dt\Big)^{\frac{\tilde \mu}{\beta_0}}, \Big(2\int_0^T(1+\int_U u_0^{\beta_0}(x) dx)dt\Big)^{\frac{\tilde \nu}{\beta_0}} \Big\}.
\end{multline*}
Since $\tilde \nu>\tilde \mu$, then we obtain \eqref{GlobU2}.
\end{proof}



\appendix

\myclearpage
\section{Appendix} 

Let $(y_j)_{j=0}^\infty$ be a sequence of non-negative numbers. 

\begin{lemma}\label{lemb1}
Let $(\alpha_j)_{j=0}^\infty$ and $(\beta_j)_{j=0}^\infty$ be sequences of positive numbers with
\beqs
\bar \alpha\eqdef\sum_{j=0}^\infty \alpha_j<\infty \text{ and }\bar \beta\eqdef \prod_{j=0}^\infty\beta_j \text{ exists and belongs to }\in(0,\infty).
\eeqs
Suppose there is $A\ge 1$ such that
\beq\label{A2}
y_{j+1}\le A^{\alpha_j} y_j^{\beta_j}\quad \forall j\ge 0.
\eeq
Then $(y_j)_{j=0}^\infty$ is a bounded sequence. More specifically, for all $j\ge 1$,
\beq\label{AS}
y_{j}\le A^{B_{j}\sum_{i=0}^{j-1} \alpha_j}y_0^{\beta_0\beta_1\ldots\beta_{j-1}},
\eeq 
and consequently,
\begin{align}
\label{AS3}
\limsup_{j\to\infty} y_j &\le A^{ B\bar\alpha}y_0^{\bar \beta},
\end{align}
where $B_{j}=\max\{1,\beta_m\beta_{m+1}\beta_{m+2}\ldots\beta_{n}:1\le m\le n< j\}$, and
$ B=\limsup_{j\to\infty} B_j$.
\end{lemma}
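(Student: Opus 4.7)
The plan is to unroll the recursion \eqref{A2} explicitly into a closed form and then pass to the limit. Two quantities must be tracked through the iteration: the accumulated exponent of $A$ and the accumulated exponent of $y_0$. Introduce auxiliary sequences $P_j=\prod_{k=0}^{j-1}\beta_k$ and $E_j=\sum_{i=0}^{j-1}\alpha_i\prod_{k=i+1}^{j-1}\beta_k$, with the convention that the empty product (corresponding to $i=j-1$) equals $1$.

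First, I would prove \eqref{AS} by induction on $j\ge 1$. The case $j=1$ is immediate from \eqref{A2}. For the inductive step, assuming $y_j\le A^{E_j}y_0^{P_j}$, monotonicity of $t\mapsto t^{\beta_j}$ on $[0,\infty)$ together with \eqref{A2} gives
\[
y_{j+1}\le A^{\alpha_j}\bigl(A^{E_j}y_0^{P_j}\bigr)^{\beta_j}=A^{\alpha_j+\beta_j E_j}\,y_0^{\beta_j P_j}=A^{E_{j+1}}\,y_0^{P_{j+1}},
\]
since $E_{j+1}=\alpha_j+\beta_j E_j$ and $P_{j+1}=\beta_j P_j$ by direct comparison of defining sums and products. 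To replace $E_j$ by the cleaner expression in \eqref{AS}, observe that each factor $\prod_{k=i+1}^{j-1}\beta_k$ in $E_j$ is either the empty product $1$ or a product $\beta_m\beta_{m+1}\cdots\beta_n$ with $1\le m\le n<j$, hence is at most $B_j$; combined with $A\ge 1$, this yields \eqref{AS}.

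Finally, for \eqref{AS3} I would pass to the limit $j\to\infty$ in \eqref{AS}. Clearly $\sum_{i=0}^{j-1}\alpha_i\to\bar\alpha$ and $P_j\to\bar\beta$, and when $y_0>0$ continuity of $t\mapsto y_0^t$ on $(0,\infty)$ gives $y_0^{P_j}\to y_0^{\bar\beta}$; the case $y_0=0$ forces $y_j\equiv 0$ so \eqref{AS3} is trivial. It remains to argue $B_j\to B<\infty$: the sequence $(B_j)$ is non-decreasing since the maximizing set grows with $j$, and it is bounded above because each product $\beta_m\cdots\beta_n$ equals $(\beta_0\cdots\beta_n)/(\beta_0\cdots\beta_{m-1})$, and by the assumption $\bar\beta\in(0,\infty)$ both numerator and denominator lie in a compact subset of $(0,\infty)$.

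The argument is mainly bookkeeping; the one point deserving care is the finiteness of $B$. Since $B_j$ allows arbitrarily long products of consecutive $\beta_k$'s, its boundedness is not immediate from convergence of a single infinite product. The splitting as a quotient of partial products above is the natural route, and this is precisely where the hypothesis $\bar\beta\in(0,\infty)$ (as opposed to merely $\bar\beta<\infty$) is used in an essential way.
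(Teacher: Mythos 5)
Your proof is correct and follows essentially the same route as the paper: both unroll the recursion to obtain $y_j\le A^{E_j}y_0^{P_j}$ with $E_j=\sum_{i<j}\alpha_i\prod_{k=i+1}^{j-1}\beta_k$, bound each coefficient $\prod_{k=i+1}^{j-1}\beta_k$ by $B_j$, and pass to the limit, with the finiteness of $B$ coming from the convergence of $\prod\beta_j$ to a positive number (the paper invokes Cauchy's criterion; your quotient-of-partial-products argument is the same fact spelled out). The only difference is presentational: you phrase the unrolling as a formal induction, which is slightly cleaner bookkeeping but not a different idea.
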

\begin{proof}
Applying \eqref{A2} recursively, we have
\begin{align*}
 y_{j+1}
&\le A^{\alpha_j} (A^{\alpha_{j-1}} y_{j-1}^{\beta_{j-1}})^{\beta_j}=A^{\alpha_j+\alpha_{j-1}\beta_j}y_{j-1}^{\beta_{j-1} \beta_j}\\
&\le \dots\le A^{\alpha_j+\alpha_{j-1}\beta_j + \alpha_{j-2}\beta_{j-1}\beta_j +\alpha_{j-3}\beta_{j-2}\beta_{j-1}\beta_j +\ldots+\alpha_0\beta_1\beta_2\ldots\beta_j }
\cdot y_0^{\beta_0\beta_1\ldots\beta_j}.
\end{align*}
It follows that $y_{j+1}\le A^{B_{j+1}(\sum_{i=0}^j  \alpha_i)}y_0^{\beta_0\beta_1\ldots\beta_j}.$
Hence, we obtain \eqref{AS}. Taking the limit superior of \eqref{AS} as $j \to \infty$, we obtain \eqref{AS3}.
Note that $B<\infty$ by Cauchy's criterion.
\end{proof}

The next lemma is the main adaptation used in this paper.

\begin{lemma}\label{Genn}
Let $\kappa_j>0$, $s_j \ge r_j>0$ and $\omega_j\ge 1$  for all $j\ge 0$.
Suppose there is $A\ge 1$ such that
\beqs
y_{j+1}\le A^\frac{\omega_j}{\kappa_j} (y_j^{r_j}+y_j^{s_j})^{\frac 1{\kappa_j}}\quad\forall j\ge 0.
\eeqs
Denote $\beta_j=r_j/\kappa_j$ and $\gamma_j=s_j/\kappa_j$.
Assume
\beqs
\bar\alpha \eqdef \sum_{j=0}^{\infty}  \frac{\omega_j}{\kappa_j}<\infty\text{ and the products }
\prod_{j=0}^\infty \beta_j, 
\prod_{j=0}^\infty \gamma_j\text{ converge to positive numbers } \bar\beta,\bar \gamma, \text{ resp.}
\eeqs
Then
\beq\label{doub1}
y_j\le (2A)^{G_{j} \bar \alpha} \max\Big\{  y_0^{\gamma_0\ldots\gamma_{j-1}},y_0^{\beta_0\ldots\beta_{j-1}} \Big\}\quad \forall j\ge 1,
\eeq
where $G_j=\max\{1, \gamma_{m}\gamma_{m+1}\ldots\gamma_{n}:1\le m\le n< j\}$.
Consequently,
\beq\label{doub2}
\limsup_{j\to\infty} y_j\le (2A)^{G \bar\alpha} \max\{y_0^{\bar \gamma},y_0^{\bar \beta}\},
\quad\text{where  }G=\limsup_{j\to\infty} G_j.
\eeq
\end{lemma}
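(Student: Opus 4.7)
The plan is to reduce to Lemma \ref{lemb1} by linearizing the max inside the recursion and then iterating while tracking both of the two extreme branches $\beta_j$ and $\gamma_j$. First, since $y_j^{r_j}+y_j^{s_j}\le 2\max\{y_j^{r_j},y_j^{s_j}\}$, raising to the power $1/\kappa_j$ and using $\omega_j\ge 1$ to absorb the factor $2^{1/\kappa_j}$ into $(2A)^{\omega_j/\kappa_j}$ yields
$$y_{j+1}\le (2A)^{\omega_j/\kappa_j}\max\{y_j^{\beta_j},y_j^{\gamma_j}\}.$$
Next I would define, for each $j$, a number $\eta_j\in\{\beta_j,\gamma_j\}$ by setting $\eta_j=\gamma_j$ if $y_j\ge 1$ and $\eta_j=\beta_j$ otherwise. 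Because $\beta_j\le\gamma_j$ and $t\mapsto y_j^t$ is monotone (increasing if $y_j>1$, decreasing if $y_j<1$), the max above equals $y_j^{\eta_j}$, giving the simpler single-branch recursion $y_{j+1}\le (2A)^{\omega_j/\kappa_j}y_j^{\eta_j}$.

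Iterating exactly as in the proof of Lemma \ref{lemb1}, I obtain
$$y_j\le (2A)^{E_j}\,y_0^{\eta_0\eta_1\cdots\eta_{j-1}},\qquad E_j=\sum_{i=0}^{j-1}\frac{\omega_i}{\kappa_i}\prod_{k=i+1}^{j-1}\eta_k,$$
with the convention that an empty product equals $1$. To reach the claimed form I would bound these two ingredients uniformly in the unknown choice of $(\eta_j)$. Since $0<\eta_k\le\gamma_k$, each inner product satisfies $\prod_{k=i+1}^{j-1}\eta_k\le\prod_{k=i+1}^{j-1}\gamma_k$, and the latter is either $1$ or of the form $\gamma_m\gamma_{m+1}\cdots\gamma_n$ with $1\le m\le n<j$; hence it is bounded by $G_j$, giving $E_j\le G_j\,\bar\alpha$. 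For the base factor I would split on the position of $y_0$ relative to $1$: if $y_0\ge 1$ then $\eta_0\cdots\eta_{j-1}\le\gamma_0\cdots\gamma_{j-1}$ and monotone increase of $t\mapsto y_0^t$ gives $y_0^{\eta_0\cdots\eta_{j-1}}\le y_0^{\gamma_0\cdots\gamma_{j-1}}$; if $0<y_0<1$ then $\eta_0\cdots\eta_{j-1}\ge\beta_0\cdots\beta_{j-1}$ and monotone decrease gives $y_0^{\eta_0\cdots\eta_{j-1}}\le y_0^{\beta_0\cdots\beta_{j-1}}$ (the case $y_0=0$ being trivial by induction). In either case the base factor is bounded by $\max\{y_0^{\gamma_0\cdots\gamma_{j-1}},\,y_0^{\beta_0\cdots\beta_{j-1}}\}$, yielding \eqref{doub1}.

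The asymptotic estimate \eqref{doub2} will then follow by letting $j\to\infty$, using continuity of $t\mapsto y_0^t$ together with $\gamma_0\cdots\gamma_{j-1}\to\bar\gamma$ and $\beta_0\cdots\beta_{j-1}\to\bar\beta$ for the base, and the observation that $G_j$ is non-decreasing in $j$ with $\limsup_{j\to\infty}G_j=G<\infty$ by Cauchy's criterion applied to the convergent product $\prod\gamma_j$. The main subtlety I anticipate lies in the dichotomy at the base: since the monotonicity of $t\mapsto y_0^t$ reverses across $y_0=1$, neither of the two candidate exponents $\bar\beta$ and $\bar\gamma$ alone bounds $y_0^{\prod\eta_k}$ for all starting values, and it is precisely the device of packaging both possibilities into a single maximum that makes the estimate uniform in $y_0\in[0,\infty)$ and robust against the unknown branch selection $(\eta_j)$.
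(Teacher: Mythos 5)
Your proposal is correct, and while it achieves the same estimates it takes a cleaner route than the paper's own proof. The paper introduces a comparison sequence $z_j$ (defined by equality with $z_0=y_0$), proves $y_j\le z_j$ by monotonicity, and exploits the fact that once $z_j\ge 1$ it stays $\ge 1$; this forces exactly three cases ($z_0\ge 1$; $z_j<1$ for all $j$; a single upward crossing at some $j_0$), each of which is reduced separately to Lemma~\ref{lemb1} with $\gamma_j$, $\beta_j$, or a $\beta$-then-$\gamma$ concatenation of the two. Your device of a data-dependent branch selector $\eta_j\in\{\beta_j,\gamma_j\}$, with $\eta_j=\gamma_j$ when $y_j\ge 1$ and $\eta_j=\beta_j$ otherwise, makes the identity $\max\{y_j^{\beta_j},y_j^{\gamma_j}\}=y_j^{\eta_j}$ hold pointwise, so you never need a comparison sequence and never need the once-crossing observation: the original sequence $(y_j)$ may oscillate around $1$ arbitrarily and the iteration still goes through. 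The envelope bounds $\beta_k\le\eta_k\le\gamma_k$ then control the accumulated constant via $E_j\le G_j\bar\alpha$, and the dichotomy on the single value $y_0$ (not on all $y_j$) yields the $\max$ in \eqref{doub1}. What the paper's route buys is that after the reduction to $z_j$ each case literally invokes Lemma~\ref{lemb1} as a black box; what your route buys is the elimination of both the auxiliary sequence and the three-way case analysis, at the modest cost of re-running the elementary iteration from Lemma~\ref{lemb1} once with a variable exponent. Both are sound; yours is arguably the more economical presentation.
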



\begin{proof}
We prove \eqref{doub1} first. Define a sequence $(z_j)_{j=0}^\infty$ by $z_0=y_0$ and $z_{j+1}= A^{\frac {\omega_j}{\kappa_j}}(z_j^{r_j}+z_j^{s_j})^{\frac 1{\kappa_j}}$ for $j\ge 0$.
Then 
\beq\label{yz} 
y_j\le z_j \text{ for all }j\ge 0.
\eeq
Therefore, it suffices to bound $z_j$. We consider three cases.

\underline{Case 1}: $z_0\ge 1$. Clearly, $z_j\ge 1$ for all $j$. Hence, 
\begin{align*}
z_{j+1}\le A^{\frac {\omega_j}{\kappa_j}} (2z_j^{s_j})^\frac1{\kappa_j}\le (2A)^{\frac {\omega_j}{\kappa_j}}z_j^{\gamma_j}.
\end{align*}
Then using Lemma \ref{lemb1} we have
\beq\label{caseone}
z_j \le (2A)^{G_j \sum_{i=0}^{j-1} \omega_i/\kappa_i} z_0^{\gamma_0\gamma_1\ldots\gamma_{j-1}}.
\eeq
Together with \eqref{yz}, we obtain \eqref{doub1}.

\underline{Case 2}: $z_j<1$ for all $j\ge 0$. Then 
\begin{align*}
z_{j+1}\le A^{\frac {\omega_j}{\kappa_j}} (2z_j^{r_j})^\frac1{\kappa_j}\le (2A)^{\frac {\omega_j}{\kappa_j}}z_j^{\beta_j}.
\end{align*}
Applying Lemma \ref{lemb1} gives
\beq\label{casetwoa}
z_j\le (2A)^{B_j\sum_{i=0}^{j-1} \omega_i/\kappa_i} z_0^{\beta_0\beta_1\ldots\beta_{j-1}},
\eeq
where $B_j=\max\{1, \beta_m\beta_{m+1}\ldots\beta_{n}:1\le m\le n< j\}<\infty.$
Note that $B_j\le G_j$. Then, again, \eqref{casetwoa} and \eqref{yz} yield \eqref{doub1}.

\underline{Case 3}: There is $j_0\ge 1$ such that $z_j<1$ for $0\le j<j_0$ and $z_{j_0}\ge 1$. 
Applying \eqref{casetwoa} to $1\le j\le j_0$,
\beq\label{case3small}
z_j\le (2A)^{B_j\sum_{i=0}^{j-1} \omega_i/\kappa_i} z_0^{\beta_0\beta_1\ldots\beta_{j-1}}
\le (2A)^{G_j \sum_{i=0}^{j-1} \omega_i/\kappa_i}  z_0^{\beta_0\beta_1\ldots\beta_{j-1}}.
\eeq

Same as Case 1, $z_j\ge 1$ for all $j\ge j_0$. Then applying \eqref{caseone} for $j> j_0$ gives
\beq\label{case2a}
z_j\le (2A)^{G_{j_0,j} \sum_{i=j_0}^{j-1}  \omega_i/\kappa_i} z_{j_0}^{\gamma_{j_0}\gamma_{j_0+1}\ldots\gamma_{j-1}},
\eeq
where $G_{j_0,j}=\sup\{1, \gamma_{j_0+m}\gamma_{j_0+m+1}\ldots\gamma_{j_0+n}: 1\le m\le n< j-j_0\}<\infty.$
Using  inequality \eqref{case3small} with $j=j_0$ to estimate $z_{j_0}$ in \eqref{case2a}, we have for $j>j_0$ that
\begin{align*}
z_j
&\le (2A)^{G_{j_0,j} \sum_{i=j_0}^{j-1}  \omega_i/\kappa_i} \Big\{ (2A)^{G_{j_0} \sum_{i=0}^{j_0-1} \omega_i/\kappa_i} z_{0}^{\beta_0\beta_1\ldots \beta_{j_0-1}} \Big\}^{\gamma_{j_0}\gamma_{j_0+1}\ldots\gamma_{j-1}}\\
&= (2A)^{G_{j_0,j} (\sum_{i=j_0}^{j-1}  \omega_i/\kappa_i) + G_{j_0} \gamma_{j_0}\gamma_{j_0+1}\ldots\gamma_{j-1} (\sum_{i=0}^{j_0-1} \omega_i/\kappa_i)}  z_{0}^{\beta_0\beta_1\ldots \beta_{j_0-1}\gamma_{j_0}\gamma_{j_0+1}\ldots\gamma_{j-1}}.
\end{align*}
Since $z_0<1$, $\beta_i\le \gamma_i$ for all $i$, and $G_{j_0,j}, G_{j_0} \gamma_{j_0}\gamma_{j_0+1}\ldots\gamma_{j-1}  \le  G_{j}$, we obtain
\beq\label{casetwob}
\begin{aligned}
z_j
&\le (2A)^{\max\{G_{j_0,j} ,G_{j_0} \gamma_{j_0}\gamma_{j_0+1}\ldots\gamma_{j-1} \} \sum_{i=0}^{j-1} \omega_i/\kappa_i}  z_{0}^{\beta_0\beta_1\ldots \beta_{j-1}}
\le (2A)^{G_{j}  \sum_{i=0}^{j-1}\omega_i/\kappa_i} z_{0}^{\beta_0\beta_1\ldots \beta_{j-1}}
\end{aligned}
\eeq
for all $j>j_0$. Then, \eqref{case3small}, \eqref{casetwob} and \eqref{yz} imply \eqref{doub1}. This completes the proof of \eqref{doub1} for all cases.

Now, taking the limit superior of \eqref{doub1} as $j\to \infty$ yields \eqref{doub2}. 
\end{proof}

Note that  the  numbers $G,\bar\alpha,\bar\beta$ and $\bar \gamma$ in inequality \eqref{doub2} are explicitly defined.

\medskip
\textbf{Acknowledgment.} L. H. acknowledges the support by NSF grant DMS-1412796.

\myclearpage

\bibliographystyle{abbrv}

\def\cprime{$'$}


\end{document}